\documentclass[10pt,a4paper]{amsart}
\usepackage[a4paper,left=3.3cm,right=3.3cm,top=3.4cm,bottom=3.4cm]{geometry}
\usepackage{newtxtext,newtxmath}
\usepackage{xcolor}
\usepackage{graphicx}
\usepackage{caption}
\usepackage{bbm}
\usepackage{amsfonts}
\usepackage{amsthm}
\usepackage{amsmath}
\usepackage{mathrsfs}
\usepackage{amscd}
\usepackage{mathdots}
\usepackage{bm}
\usepackage{mathtools}
\usepackage[all]{xy}
\usepackage{hyperref}

\selectfont

\newcommand{\SO}{{\mathrm{SO}}}

\newcommand{\RAT}{\mathbb{Q}}

\newcommand{\R}{\mathbb {R}}

\newcommand{\Hom}{\mathrm{Hom}}
\newcommand{\Sh}{\mathrm{Sh}}

\newcommand{\GL}{\mathrm{GL}}
\newcommand{\GSp}{\mathrm{GSp}}

\DeclareMathOperator{\Spec}{\mathrm{Spec}}

\renewcommand{\min}{{\mathrm{min}}}
\renewcommand{\max}{{\mathrm{max}}}
\newcommand{\rF}{{\mathrm{F}}}
\newcommand{\rV}{{\mathrm{V}}}
\newcommand{\rg}{{\mathrm{g}}}

\newcommand{\BF}{{\mathbb{F}}}
\newcommand{\GU}{{\mathrm{GU}}}
\renewcommand{\Bar}{\overline}
\newcommand{\Fp}{{\mathbb{F}_p}}
\newcommand{\Fpbar}{{\overline{\mathbb{F}}_p}}
\newcommand{\End}{{\mathrm{End}}}
\newcommand{\EO}{{\mathrm{EO}}}
\newcommand{\kk}{{\bm{k}}}
\newcommand{\Z}{{\mathbb{Z}}}
\newcommand{\F}{\mathbb{F}}
\newcommand{\Spin}{{\mathrm{Spin}}}
\newcommand{\Q}{{\mathbb{Q}}}
\newcommand{\bR}{{\mathbb{R}}}
\newcommand{\C}{{\mathbb{C}}}
\newcommand{\Af}{{\mathbb{A}_f}}

\newcommand{\Gzips}{\ensuremath{{G\text{-}\mathsf{Zip}^{\mu}}}}

\newcommand{\GSpin}{{\mathrm{GSpin}}}
\newcommand{\diag}{\mathrm{diag}}
\newcommand{\ad}{\mathrm{ad}}

\newcommand{\Zp}{{\mathbb{Z}_p}}
\newcommand{\antidiag}{\mathrm{antidiag}}
\newcommand{\Fpsq}{{\mathbb{F}_{p^2}}}
\newcommand{\new}{{\mathrm{new}}}
\newcommand{\zip}{{\mathrm{zip}}}
\newcommand{\KS}{\mathrm{KS}}
\newcommand{\Qp}{\mathbb{Q}_p}

\providecommand{\Gm}{\mathbb{G}_m}
\newcommand{\Frob}{\mathrm{Frob}}
\newcommand{\Span}{\mathrm{Span}}
\newcommand{\cancong}{\ensuremath{{\ \displaystyle \mathop{\cong}^{\text{\tiny{can}}}}\ }}
\newcommand{\data}{\mathrm{data}}
\newcommand{\fram}{\mathrm{fram}}
\newcommand{\Zpbrev}{\Breve{\mathbb{Z}}_p}
\newcommand{\Qpbrev}{\Breve{\mathbb{Q}}_p}

\normalsize
\newtheorem{innercustomthm}{Theorem}
\newenvironment{customthm}[1]{\renewcommand\theinnercustomthm{#1}\innercustomthm}{\endinnercustomthm}

\newtheorem{theorem}{Theorem}[subsection]
\newtheorem{lemma}[theorem]{Lemma}
\newtheorem{remark}[theorem]{Remark}
\newtheorem{proposition}[theorem]{Proposition}
\newtheorem{corollary}[theorem]{Corollary}
\theoremstyle{definition}
\newtheorem{definition}[theorem]{Definition}

\theoremstyle{remark}

\theoremstyle{plain}



\author{Qijun Yan and Chao Zhang}
\address{Qijun Yan: Hetao Institute of Mathematics and Interdisciplinary Sciences
(HIMIS), Shenzhen, 518\,017, China}
\email{yanqijun@himis-sz.cn}
\address{Chao Zhang: Qingdao University, Qingdao,  266\,071,China}
\email{zhangchao1217@gmail.com}

\subjclass[2010]{14G35, 11G18}
\title[Ekedahl-Oort strata under embeddings]{Ekedahl-Oort strata under natural embeddings of orthogonal and unitary Shimura varieties}
\begin{document}
	  \begin{abstract}
    In this paper, we study the behavior of Ekedahl-Oort strata under natural embeddings between the good reductions modulo \(p\) of GSpin Shimura varieties and Rapoport-Smithling-Zhang unitary Shimura varieties, a prototypical setting for the construction of special cycles in the Kudla program. In each case, we determine the EO stratum containing the image of a given EO stratum under the embedding. We also compute discrete invariants of these Shimura varieties, including their \(p\)-ranks and \(a\)-numbers; in the GSpin case, these are obtained via the Kuga-Satake embedding.
\end{abstract}
	\maketitle
	\section*{Introduction}

\subsection{Background and main goal of the paper}

Let \(\mathcal{A}_{g,N,\mathbb{F}_p}\) be the moduli space over \(\mathbb{F}_p\) of principally polarized abelian varieties of dimension \(g\) with symplectic level \(N\)-structure, where \(p\) is a fixed odd prime and \(N \ge 3\) is an integer prime to \(p\). As a characteristic-\(p\) phenomenon, \(\mathcal{A}_{g,N,\mathbb{F}_p}\) admits several natural stratifications, including the Newton stratification, the Ekedahl--Oort (EO) stratification, the \(a\)-number stratification, and the \(p\)-rank stratification. These stratifications are defined by different invariants of abelian varieties in characteristic \(p\), and some are finer than others. For instance, the EO stratification refines both the \(p\)-rank stratification and the \(a\)-number stratification, while the Newton stratification is finer than the \(p\)-rank stratification. Several of these stratifications extend to the good reductions modulo \(p\) of more general Shimura varieties.

The EO stratification is the main focus of this paper, although some of our results also concern related strata. In the Siegel case, the EO stratum of a point is determined by the isomorphism class of the associated principally polarized truncated BT-1 \cite{EOTexel01}. More precisely, two geometric points of \(\mathcal{A}_{g,N,\mathbb{F}_p}\) lie in the same EO stratum if and only if their \(p\)-torsion group schemes are isomorphic as principally polarized BT-1's. This stratification has been generalized to broader classes of Shimura varieties; see \cite{VW13} for the PEL case, \cite{EOZhang} for the Hodge type case, and \cite{SZ22} for the abelian type case.

Now let
\(
\Sh_{H,\overline{\mathbb{F}}_p} \longrightarrow \Sh_{G,\overline{\mathbb{F}}_p}
\)
be a morphism of geometric mod \(p\) Shimura varieties arising from a morphism of Shimura data with good reduction at \(p\) (see \S 1.1). It is known that the image of an EO stratum of \(\Sh_{H,\overline{\mathbb{F}}_p}\) is contained in a single EO stratum of \(\Sh_{G,\overline{\mathbb{F}}_p}\); we call the latter the \emph{image EO stratum}. Once explicit parametrizations of EO strata on both sides are available, it is natural to ask which EO stratum of \(\Sh_{G,\overline{\mathbb{F}}_p}\) contains the image of a given EO stratum of \(\Sh_{H,\overline{\mathbb{F}}_p}\).

In this paper, we study this question for certain natural embeddings between GSpin Shimura varieties and Rapoport--Smithling--Zhang (RSZ) unitary Shimura varieties. These embeddings arise in a prototypical setting for the construction of special cycles in the Kudla program; see, for example, \cite{Kud97, KR00, KR99}. In the cases considered here, the EO stratifications admit relatively simple parametrizations. For instance, for the embedding
\[
\Sh_{\widetilde{\mathrm{GU}}(n,1),\Fpbar}
\longrightarrow
\Sh_{\widetilde{\GU}(n+1,1),\Fpbar},
\]
the EO strata on the source are parametrized by \(\{0,1,\dots,n\}\), while those on the target are parametrized by \(\{0,1,\dots,n+1\}\). Our goal is to determine, for each EO stratum on the source, its image EO stratum; see Theorem~\ref{Thm:MainThmOrthog} and Theorem~\ref{Thm:MainThmUnit} below.
    
	\subsection{Main results} 
\paragraph{\textbf{Orthogonal case}}
We begin with the orthogonal case. To state the theorem, we introduce some notation; for simplicity, in this introduction we assume that \(n \ge 2\). Let \((V,q)\) be a quadratic space over \(\mathbb{Q}\) of real signature \((n,2)\), and assume that \(V\) contains a self-dual \(\mathbb{Z}_{(p)}\)-lattice \(\Lambda\), which we fix once and for all. Let \(\mathcal{S}h_{\GSpin(\Lambda)}\) and \(\mathcal{S}h_{\SO(\Lambda)}\) denote the corresponding \(p\)-integral models of the GSpin and SO Shimura varieties, with hyperspecial level at \(p\) and sufficiently small prime-to-\(p\) level (see \S2). They are related by a finite morphism
\[
\mathcal{S}h_{\GSpin(\Lambda)} \to \mathcal{S}h_{\SO(\Lambda)}.
\]
 The former is of Hodge type, while the latter is of abelian type. As recalled in Theorem~6.3.1 below, their EO stratifications are defined via zip period maps.

To make the dependence on the signature (hence dimension) more transparent, we write
$\GSpin(n-1,2)$ and $\GSpin(n,2)$ for $\GSpin(\Lambda')$ and $\GSpin(\Lambda)$,
and similarly $\SO(n-1,2)$ and $\SO(n,2)$ for $\SO(\Lambda')$ and $\SO(\Lambda)$.
By the general theory of EO stratifications for Hodge type and abelian type Shimura varieties, the EO stratification on \(\Sh_{\GSpin(n,2),\Fpbar}\) is the pullback of that on \(\Sh_{\SO(n,2),\Fpbar}\). Therefore, for the purpose of describing EO strata, we may work with the corresponding orthogonal zip data. In the present case, the EO strata admit an explicit parametrization by a finite partially ordered set ${}^{\mu}W$, and are in fact almost determined by their dimensions. More precisely, when $n$ is odd, there is a unique EO stratum of dimension $i$ for each $0 \le i \le n$; when $n$ is even, there are two EO strata of dimension $n/2$, and a unique EO stratum in every other dimension. We refer to the main text for the precise definitions. For the purposes of this introduction, it suffices to note that ${}^{\mu}W$ is a finite poset whose order governs the closure relations of the EO strata.
	
    Let $V'\subseteq V$ be a codimension $1$ subspace such that the restriction $q'=q|_{V'}$ has signature $(n-1,2)$. Assume that $\Lambda'=\Lambda\cap V'$ is also self-dual. Then the canonical map $\Sh_{\GSpin(n-1,2),\Qp}\to \Sh_{\GSpin(n,2),\Qp}$ extends to a morphism of $p$-integral models $\mathcal{S}h_{\GSpin(n-1,2)}\hookrightarrow \mathcal{S}h_{\GSpin(n,2)}$ over $\Zp$, where the levels at $p$ are hyperspecial and the away-from-$p$ levels are chosen compatibly so that the map is a closed embedding. Reducing modulo $p$ and passing to geometric fibers, we obtain an embedding,
	\[
	\Sh_{\GSpin(n-1,2),\Fpbar}\hookrightarrow \Sh_{\GSpin(n, 2),\Fpbar}.
	\]

	\begin{customthm}{\textbf{A}}
    \label{Thm:MainThmOrthog} Let $n\geq 1$ be an integer and $0\leq i\leq n-1$. Under the embedding above, an EO stratum of dimension $i$ in $\Sh_{\GSpin(n-1, 2), \Fpbar}$ (equivalently, the corresponding EO stratum on the $\SO$ side) has image EO stratum of dimension $j$, where:
		\begin{itemize} 
			\item If $n$ is even, then $j = \begin{cases} 
				i, & i < n/2, \\
				i+1, & i \geq n/2.
			\end{cases}$
			\item If $n$ is odd, then $j = \begin{cases} 
				i, & i < (n-1)/2, \\
				i+1, & i > (n-1)/2,
			\end{cases}$\\
			and for $i = (n-1)/2$: $j = \begin{cases}
				i+1, & \text{if\, } \SO(\Lambda')_{\Fp} \text{ is split over }\Fp , \\
				i, & \text{if\, } \SO(\Lambda')_{\Fp} \text{ is nonsplit over }\Fp.
			\end{cases}$
		\end{itemize}
	\end{customthm}
    In particular, the theorem says that in case $n$ is odd, the two different EO strata of dimension $(n-1)/2$ of $\Sh_{\GSpin(n-1, 2), \Fpbar}$ have the same image EO stratum inside $\Sh_{\GSpin(n, 2), \Fpbar}$, whose dimension could be $(n-1)/2$ or $(n+1)/2$, depending on the splitting behavior of the  $\Fp$-reductive group $\SO(n-1, 2)_{\Fp}$.  We refer to \S~\ref{S:Examples} for examples of Theorem~\ref{Thm:MainThmOrthog} when $n\leq 3$, and to \S~\ref{S:DiagOrthog} for an illustrative diagram of Theorem~\ref{Thm:MainThmOrthog}. The proof is computational, relying on the functoriality of the zip period maps defining the EO strata of $\Sh_{\GSpin(n-1, 2), \Fpbar}$ and $\Sh_{\GSpin(n, 2), \Fpbar}$.
    %A key technical input is the notion of frames introduced in \cite{PWZ}, which offers a flexible framework for identifying the parametrizing poset ${}^\mu W$ order-preservingly with the underlying topological space of the orthogonal zip stack. This allows us to reduce the determination of image EO strata to explicit calculations with framed zip orbits. We also use the orbit-coincidence criteria of loc.\ cit.\ to determine when two such framed zip orbits agree; cf.\ Theorem~\ref{Thm:Criteria}.
    
    As an application of this theorem, we compute the $a$-numbers of the EO strata of $\Sh_{\GSpin(n, 2),\Fpbar}$ via the Kuga--Satake embedding, as stated in Corollary~\ref{Cor:anumber}. Moreover, in Theorem~\ref{Thm:Prank} we determine the Newton slopes and $p$-ranks of the Newton strata of $\Sh_{\GSpin(n, 2),\Fpbar}$, again via the Kuga--Satake embedding; in particular, this gives the $p$-ranks of the EO strata.

 \paragraph{\textbf{Unitary case}} We next consider the analogous question for the natural embeddings of RSZ unitary Shimura varieties $\Sh_{\widetilde{\GU}(n,1), \Fpbar}$ into $\Sh_{\widetilde{\GU}(n+1,1),\Fpbar}$; see \S 12 for details. We work with the RSZ version because $\Sh_{\mathrm{U}(n,1), \Fpbar}$ is not of PEL type, but only of abelian type, whereas the associated similitude unitary Shimura varieties $\Sh_{\GU(n,1), \Fpbar}$ and the RSZ Shimura varieties are of PEL type. Moreover, although there is a natural embedding $\mathrm{U}(n,1)\to \mathrm{U}(n+1,1)$, it does not extend to an embedding $\GU(n,1)\to \GU(n+1,1)$, and hence does not induce a map between the corresponding Shimura varieties. By contrast, the RSZ Shimura varieties do admit natural embeddings $\Sh_{\widetilde{\GU}(n,1), \Fpbar}\to \Sh_{\widetilde{\GU}(n+1,1), \Fpbar}$.  Moreover, the natural projection \(\Sh_{\widetilde{\GU}(n,1),\Fpbar}\to \Sh_{\mathrm{U}(n,1),\Fpbar}\) induces a bijection between their EO strata, compatible with closure relations. In the unitary case, we obtain results similar to, but slightly simpler than, those in the orthogonal case.
	
	\begin{customthm}{\textbf{B}}\label{Thm:MainThmUnit}
	    Let $n\geq 0$ be an integer and $0\leq i\leq n$. Under the embedding  $\Sh_{\widetilde{\GU}(n, 1), \Fpbar} \to \Sh_{\widetilde{\GU}(n+1, 1), \Fpbar}$, an EO stratum of dimension $i$ in $\Sh_{\widetilde{\GU}(n, 1), \Fpbar}$ has image EO stratum of dimension $j$, where
        \[
j=
\begin{cases}
i+1, & \text{if } p \text{ splits in } \kk,\\
i,   & \text{if } p \text{ is inert in } \kk \text{ and } i \le \frac{n}{2},\\
i+1, & \text{if } p \text{ is inert in } \kk \text{ and } i > \frac{n}{2}.
\end{cases}
\]
	\end{customthm}

    Thanks to the moduli interpretation, the proof of Theorem~\ref{Thm:MainThmUnit} is simpler than that of its orthogonal analogue, Theorem~\ref{Thm:MainThmOrthog}. When $p$ splits in $\kk$, the Shimura varieties $\Sh_{\GU(n,1), \Fpbar}$ and $\Sh_{\widetilde{\GU}(n,1), \Fpbar}$ are of \emph{fake} unitary type, so their EO, Newton, and $p$-rank stratifications coincide; in this case, the image EO strata can be determined from the $p$-rank of each stratum. When $p$ is inert in $\kk$, the EO and Newton stratifications for unitary Shimura varieties of good reduction and signature $(n,1)$ were studied in \cite{BW06}; see also \cite{VW11}. In both cases, the moduli interpretation together with the classification of BT-$1$'s with additional structures in \cite{MoonenWeyl} allows one to describe the relevant EO strata. We present two slightly different proofs of this theorem, both via the moduli interpretation, and we also record the $p$-ranks and $a$-numbers of $\Sh_{\widetilde{\GU}(n,1), \Fpbar}$; see Lemmas~\ref{Lem:PrankUnit} and~\ref{Lem:AnumberUnit}.

\subsection{Structure of the paper}
Sections~\ref{S:preliminary} and \ref{S:ShvGSpinSO} provide general preliminaries on Shimura varieties of Hodge and abelian type, with particular emphasis on the orthogonal and \(\GSpin\) cases, as well as on the \(G\)-zip stacks that serve as period domains for the zip period maps defining the EO strata.

The orthogonal case is developed in \S\S~\ref{S:RedPairNorm}--\ref{S:Examples}. Sections~\ref{S:RedPairNorm}--\ref{S:Frames} supply the zip-theoretic and combinatorial preparation, especially the normalization of the relevant reductive pairs and the choice of frames for the zip group data attached to the \(G\)-zip stacks whose underlying topological spaces parametrize the EO strata of \(\Sh_{\GSpin(n, 2), \Fpbar}\). In \S~\ref{S:EOOrthog}, we establish the order-preserving bijection between \({}^\mu W\) and the underlying topological space of the corresponding \emph{normalized} zip stack, and apply the general criteria of \cite{PWZ} in our special orthogonal setting to determine when two framed zip orbits coincide. We then prove Theorem~\ref{Thm:MainThmOrthog} in \S~\ref{EOunderKudlaEmbed} and compute the \(a\)-numbers of the EO strata of \(\Sh_{\GSpin(n, 2), \Fpbar}\); \S~\ref{S:ZerodimKS} treats the analogous question for the zero-dimensional Kuga--Satake embedding, while Sections~\ref{S:NewtonStra} and \ref{S:PRank} are devoted to the Newton stratification of \(\Sh_{\GSpin(n, 2), \Fpbar}\) and the computation of the corresponding \(p\)-ranks. Finally, \S~\ref{S:Examples} presents examples of Theorem~\ref{Thm:MainThmOrthog} in small ranks, together with an illustrative diagram.

The unitary case is developed in \S\S~\ref{S:RSZVar}--\ref{S:UnitDiagram}. In \S~\ref{S:RSZVar}, we introduce the RSZ unitary Shimura varieties, their moduli interpretations, and the natural embeddings between them, and in \S~\ref{S:EOUnitShv} we formulate and prove Theorem~\ref{Thm:MainThmUnit}. The \(p\)-ranks and \(a\)-numbers of EO strata for unitary Shimura varieties are computed in \S\S~\ref{S:PrankUnitShv} and \ref{S:AnumberUnitShiv}, and \S~\ref{S:UnitDiagram} concludes this part with an illustrative diagram for Theorem~\ref{Thm:MainThmUnit}.

	\subsection{Notations and conventions}
	\begin{enumerate}
		\item In this paper we focus on Shimura varieties with \emph{good} reduction at $p$ and on morphisms between them; thus the level at $p$ is always hyperspecial. The EO stratification on the special fiber is independent of the prime-to-$p$ level: for any Shimura variety attached to a group $G$ and fixed hyperspecial $\mathsf{K}_p$, the natural change-of-level map
		\[
		\Sh_{G,\Fpbar}(\mathsf{K}_p\mathsf{K}^p)\longrightarrow \Sh_{G,\Fpbar}(\mathsf{K}_p\mathsf{K}^{\prime p})
		\]
		pulls back EO strata. Accordingly, when discussing $p$-integral models or their reductions we often suppress the level in the notation. For example, a map $\Sh_{G_1,\Fp}\to \Sh_{G_2,\Fp}$ will be understood from context. 
        \item We use \(\mathcal{S}h\) for \emph{integral models} (global or local) of Shimura varieties, and \(\Sh\), often with a subscript such as \(\Fpbar\), for their \emph{fibers}, to reflect the fact that the former have dimension one greater than the latter. 
		\item  Let \((V,q)\) be a quadratic space over a field and \(\delta_1,\dots,\delta_n\) a fixed basis of $V$. The notation \(q=\sum_{i,j} a_{ij} x_i x_j\) means that \(q(x)=\sum_{i,j} a_{ij} x_i x_j\), for all \(x=\sum_i x_i\delta_i\).
		
		\item For a matrix $A = (a_{i,j})$ with entries in a ring of characteristic $p$, we write
		$\Frob(A)=A^{(p)}$ for the matrix $(a_{i,j}^p)$ obtained by raising each entry to the $p$-th power.
		\item Even when a reductive group \(G\) is given explicitly---for example, \(G=\SO(\Lambda)_{\Fp}\)---we often simply speak of the \(G\)-zip stack rather than the \(\SO(\Lambda)_{\Fp}\)-zip stack.
	\end{enumerate}

    \subsection*{Acknowledgements}
Some initial discussions of this work took place during the authors' visits to Westlake University. We thank Yigeng Zhao for his invitation. During the preparation of this work, Q. Yan was affiliated with the Beijing Institute of Mathematical Sciences and Applications (BIMSA), and C. Zhang was affiliated with the Shing-Tung Yau Center of Southeast University (SEUYC). The authors thank BIMSA and SEUYC for their hospitality and support during visits to each other’s institutes. Q. Yan also thanks Ruiqi Bai and Qingchao Yu for helpful discussions.
	
	\section{Preliminaries}\label{S:preliminary}
	Throughout the paper, we let $p$ denote a fixed odd prime number. 
	
\subsection{Standing conventions on Shimura varieties} \label{S:Shv}
Let $(G, X)$ be a Shimura datum of Hodge type, together with a fixed Siegel embedding $(G, X) \hookrightarrow (\GSp_{2g}, \mathbb{H}^\pm_g)$. In this paper, we always assume that $G(\mathbb{Q}_p)$ admits a hyperspecial subgroup $\mathsf{K}_p$, i.e., $\mathsf{K}_p = \mathcal{G}(\mathbb{Z}_p)$ for a reductive model $\mathcal{G}$ of $G_{\mathbb{Q}_p}$. Whenever we talk about a Shimura variety $\Sh_G = \Sh_{G,\mathsf{K}} = \Sh_{\mathsf{K}}(G, X)$ attached to a Shimura datum $(G, X)$, we always assume that $\mathsf{K}$ is of the form $\mathsf{K} = \mathsf{K}_p \mathsf{K}^p \subseteq G(\mathbb{A}_f)$ with $\mathsf{K}^p$ sufficiently small. Of course, here we have suppressed the level subgroup $\mathsf{K}$ from the notation; this is what we will do from now on, unless the level becomes sensitive.

Moreover, when we talk about a morphism of Shimura varieties $\Sh_{G'} \to \Sh_G$, it is always understood that we have chosen level structures $\mathsf{K}' = \mathsf{K}'_p \mathsf{K}'^p \subseteq G'(\mathbb{A}_f)$ and $\mathsf{K} = \mathsf{K}_p \mathsf{K}^p \subseteq G(\mathbb{A}_f)$, with $\mathsf{K}'^p$ and $\mathsf{K}^p$ sufficiently small as above, and that the underlying homomorphism $G' \to G$ sends $\mathsf{K}'_p$ to $\mathsf{K}_p$ and $\mathsf{K}'^p$ to $\mathsf{K}^p$.

Let $E$ be the reflex field of $(G,X)$, $v\mid p$ a fixed place of $E$, with residue field $\kappa$ and ring of integers
	\(
	O_{E_v} = W(\kappa).
	\)
	Let $[\mu] = [\mu]_{W(\kappa)}$ be a conjugacy class over  $W(\kappa)$ of cocharacters of $\mathcal{G}$ determined by the Shimura datum $(G,X)$. Since $\mathcal{G}$ is unramified over $W(\kappa)$, the conjugacy class $[\mu]$ admits a representative cocharacter $\mu: \mathbb{G}_{m, W(\kappa)} \to \mathcal{G}_{W(\kappa)}$ defined over $W(\kappa)$. 
	Let
	\(
	\mathcal{S}h_{G,W(\kappa)}
	\)
	denote the (integral) canonical model of $\Sh_G$ over $W(\kappa)$, constructed by Kisin-Vasiu. To summarize, we fix the following \emph{mod $p$ data}:
	\begin{equation}\label{Eq:ModpData}
		(G,X),\, \mathcal{G},\, \mathsf{K}_p=\mathcal{G}(\Zp),\, \mathsf{K}=\mathsf{K}_p\mathsf{K}^p\subseteq G(\Af),\,
		E,\, v,\, O_{E_v}=W(\kappa),\, \mu=\mu_{W(\kappa)},\, \Sh_G=\Sh_{G,W(\kappa)}.
	\end{equation}
	 We call $\Sh_{G, \kappa} := \mathcal{S}h_{G, W(\kappa)} \otimes_{W(\kappa)} \kappa$ a mod $p$ Shimura variety over the residue field $\kappa$, and $\Sh_{G, \Fpbar} := \Sh_{G, \kappa} \otimes_{\kappa} \Fpbar$ the corresponding geometric mod $p$ Shimura variety.  We call the triple $(\mathcal{G}, \mathcal{G}_{W(\kappa)}, \mu)$, or $(\mathcal{G}, \mathcal{G}_{W(\kappa)}, [\mu])$ when $\mu$ is chosen, a \emph{reductive triple} over $W(\kappa)$.

\noindent\textbf{Convention:} In what follows, we use $\mathcal{S}h$ for (integral) canonical models and $\Sh$ for their fibers. Whenever we refer to a mod $p$ Shimura variety, the associated mod $p$ datum is understood to be as above. Moreover, any morphism between mod $p$ Shimura varieties is always induced by a morphism of the corresponding mod $p$ data in the following sense: we have two tuples of mod $p$ data as in \eqref{Eq:ModpData}, which we distinguish by subscripts 1 and 2, and the following:
\begin{enumerate}
    \item A morphism of Shimura data $\alpha: (G_1, X_1) \to (G_2, X_2)$ such that $\alpha(\mathsf{K}_{p,1}) \subseteq \mathsf{K}_{p,2}$. Let $E_1, E_2$ denote their reflex fields; then we automatically have $E_1 \supseteq E_2$.
    \item A requirement $v_1 | v_2 | p$, where $v_i$ is a place of $E_i$. Then we have induced inclusions of local fields $E_{2,v_2} \to E_{1,v_1}$ and local rings $W(\kappa_2) = O_{E_2, v_2} \to O_{E_1, v_1} = W(\kappa_1)$.
    \item A morphism of reductive triples $(\mathcal{G}_1, \mathcal{G}_{1, W(\kappa_1)}, \mu_1) \to (\mathcal{G}_2, \mathcal{G}_{2, W(\kappa_2)}, \mu_2)$, meaning a homomorphism of reductive $\mathbb{Z}_p$-group schemes $\mathcal{G}_1 \to \mathcal{G}_2$ and a morphism of conjugacy classes $[\mu_1] \to [\mu_2]$ over $W(\kappa)$. 
    \item A morphism of $p$-integral Shimura varieties $f: \mathcal{S}h_{G_1, W(\kappa_1)} \to \mathcal{S}h_{G_2, W(\kappa_2)} \otimes_{W(\kappa_2)} W(\kappa_1)$, compatible with the data above.
\end{enumerate}
With the setting and conventions above understood, in this paper we will simply write $\Sh_{G_1, \Fpbar} \to \Sh_{G_2, \Fpbar}$ for the base change along the embedding $\kappa_1 \hookrightarrow \Fpbar$ of the reduction mod $p$ of $f$, i.e., $\Sh_{G_1, \kappa_1} \to \Sh_{G_2, \kappa_2} \otimes_{\kappa_2} \kappa_1$.

    \subsection{The $G$-zip stack $\Gzips$}\label{S:Zipstack}
	Let $(G, \mu)$ be a reductive pair over a finite field $\kappa$; by this we mean $G$ is reductive group defined over $\Fp$ and $\mu: \mathbb{G}_{m, \kappa} \to G_{\kappa}$ is a cocharacter. Let \(P_{+}:=P_{\mu}\) and \(P_{-}:=P_{\mu^{-1}}\) be the parabolic subgroups attached to \(\mu\), with Levi decompositions \(P_{\pm}=U_{\pm}\rtimes L\). Their common Levi is \(L=P_{+}\cap P_{-}\), equivalently \(L=\mathrm{Cent}_G(\mu)\). Attached to the reductive pair $(G,\mu)$ is the quotient stack
	\[
	\Gzips := [E_{\mu}\backslash G_{\kappa}],
	\]
	introduced and studied in \cite{PWZ,PWZ2}, building on earlier work of Moonen--Wedhorn on $F$-zips \cite{MWFzip}.
	Here
	\(
	E_{\mu} \subseteq P_+ \times P_-^{\varphi}
	\)
	consists of pairs $(p_+,p_-)$ such that the Levi components $m(p_+)$ of $p_+$ and $m(p_-)$ of $p_-$ satisfy
	\(
	\varphi_L\bigl(m(p_+)\bigr) = m(p_-),
	\)
	where $\varphi_L \colon L \to L^{\varphi}$ is the relative Frobenius of $L$.
	We refer to \cite{PWZ2} for the moduli interpretation of $\Gzips$ in terms of $G$-zips. Associated with the geometric zip stack \((G\text{-}\mathsf{Zip}^\mu)_{\Fpbar}\) is the \(G\)-zip group datum over \(\Fpbar\),
\begin{equation}
	Z_{\mu}^{\data}=(G_{\Fpbar},\, P=(P_{\mu})_{\Fpbar},\, Q=(P_{\mu^{-1}}^{\varphi})_{\Fpbar},\, \varphi: L=L_{\Fpbar}\to L^{\varphi}),
\end{equation}
which is termed a \emph{connected algebraic zip datum} in \cite{PWZ2}. Here and below, we omit the subscript \(\Fpbar\) when it is clear from the context that we are working over \(\Fpbar\).

\subsection*{Frames}
The following definition is the special case relevant for us of the general notion of frames introduced in \cite{PWZ}.
\begin{definition}
	For the zip group datum \(Z_{\mu}^{\mathrm{data}}\) as above, a \emph{frame} is a triple \((B,T,g)\) such that \((B,T)\) is a Borel pair of \(G_{\Fpbar}\) and \(g \in G(\Fpbar)\), satisfying:
	(i) \(B \subseteq P\);
	(ii) \(g^{-1}Bg \subseteq Q\);
	(iii) \(\varphi(B \cap L) = g^{-1}Bg \cap L^\varphi\); and
	(iv) \(\varphi(T) = g^{-1}Tg \subseteq L^\varphi\).
\end{definition}

Let \(W\) be the Weyl group of \(G\) with respect to \((B,T)\), let \(W_\mu \subseteq W\) be the Weyl group of \(L\) with respect to \((B \cap L, T)\), and let \({}^\mu W \subseteq W\) be the set of minimal representatives for the coset space \(W_\mu \backslash W\).
    
	\begin{theorem}[{\cite[Thm.~7.5]
			{PWZ}}]\label{Thm:ZipstackTopWeyl} For the zip group data $Z_{\mu}^{\mathsf{data}}$, and fixed frame $(B,T, g)$ as above, we have
		homeomorphisms
		\[
		{}^\mu W \xrightarrow[\cong]{\mathsf{Frame}} |\Gzips| \cancong E_{\mu}(\Fpbar) \backslash G(\Fpbar), \quad w \mapsto G^w:=E_\mu\cdot (g w),
		\]
         where the topology of ${}^\mu W$ is induced by the partial order recalled in Theorem \ref{Thm:PartOrd} below. 
	\end{theorem}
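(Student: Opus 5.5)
This is the classification of $E_\mu$-orbits in $G_{\Fpbar}$ of Pink--Wedhorn--Ziegler, specialized to the connected zip datum $Z_\mu^{\data}$ and a fixed frame $(B,T,g)$. I would follow their strategy, which has two halves. The first is a set-theoretic bijection ${}^\mu W \to E_\mu(\Fpbar)\backslash G(\Fpbar)$, $w\mapsto E_\mu\cdot(gw)$. The second identifies the closure order on orbits with the partial order $\preceq$ on ${}^\mu W$ recalled in Theorem~\ref{Thm:PartOrd}. The identification $|\Gzips| = E_\mu(\Fpbar)\backslash G(\Fpbar)$ is formal: $\Gzips$ is a quotient stack of finite type over a field, there are finitely many orbits, and points of the quotient stack correspond to orbits of geometric points, with the quotient topology. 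So the content lies in the map $\mathsf{Frame}$.

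For \emph{surjectivity}, I would use the Bruhat decomposition $G=\bigsqcup_{x\in {}^\mu W} P\,x\,B$ together with the frame conditions. Using $B\subseteq P$ and $g^{-1}Bg\subseteq Q$, every element of $G$ can be moved by the $U_+\times U_-^{\varphi}$ part of $E_\mu$ into a double coset of the form $P\,g\,\dot x\,\cdots$. The residual action of $L$ through the twisted conjugation $\ell\cdot h=\ell h\varphi(\ell)^{-1}$ then reduces the problem to an algebraic zip datum for the smaller group $L\cap {}^{x}L^{\varphi}$-type Levi. This suggests an induction on $\dim G$, following \cite[\S\S4--5]{PWZ}. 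Lang's theorem for the Frobenius-twisted conjugation on $L$, which applies because $\varphi$ is an isogeny with finite fixed points, terminates the induction at the torus stage. There all elements of $T$ are conjugate to $1$ under $t\mapsto t\,\varphi(t)^{-1}$, using condition (iv) $\varphi(T)=g^{-1}Tg$.

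For \emph{injectivity}, I would compute the stabilizer of $gw$ in $E_\mu$ and show $\dim E_\mu\cdot(gw)=\dim P+\ell(w)$. Distinct $w$ then give distinct orbits. To see this, observe that the projection of the orbit to $P\backslash G/(g^{-1}Bg)$-type Bruhat data recovers the relative position, and two elements of ${}^\mu W$ giving the same orbit would have to share a Bruhat cell of $P\backslash G$. Because $w$ is a minimal coset representative, this forces equality.

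For the \emph{topology}, I would show that $\overline{G^w}=\bigcup_{w'\preceq w}G^{w'}$. The inclusion ``$\supseteq$'' comes from degenerating elements inside Bruhat closures, using $\overline{BwB}\supseteq Bw'B$ for $w'\le w$ combined with twisting by the $W_\mu$-action that defines $\preceq$. The inclusion ``$\subseteq$'' is the hard direction. I expect this to be the main obstacle, since $\preceq$ is not simply the Bruhat order restricted to ${}^\mu W$ but involves $y^{-1}w'\varphi(y)\le w$ for some $y\in W_\mu$. I would first attempt it by the inductive reduction to $L$-zip data, which is compatible with closures because the map to Bruhat cells is continuous. Failing that, I would cite \cite[Thm.~6.2]{PWZ2}, which is where He's results on $G$-stable pieces are used.
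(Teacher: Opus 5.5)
The paper gives no argument of its own here; it quotes \cite[Thm.~7.5]{PWZ}. Reconstructing the Pink--Wedhorn--Ziegler proof is therefore the right plan, and your surjectivity and closure sketches point at the right ingredients: induction over Levi subgroups, Lang--Steinberg, and He's results for the hard closure inclusion. The injectivity step, however, fails as written.

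The formula $\dim E_\mu\cdot(gw)=\dim P+\ell(w)$ cannot separate elements of equal length, and these occur. Here $w_{m-1}$ and $w^{\dagger}_{m-1}$ both have length $m-1$ for $n$ even. So everything rests on your ``Bruhat data'' argument, and that invariant does not exist. Since $\varphi\colon L\to L^{\varphi}$ is surjective, every $q\in Q$ is the second component of some element of $E_\mu$. Hence the image of $E_\mu\cdot h$ in $P\backslash G$ is all of $P\backslash PhQ$, which in general is a union of several $(P,g^{-1}Bg)$-double cosets. An orbit therefore does not single out one Bruhat cell relative to $g^{-1}Bg$.

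What an orbit does determine is its $(P,Q)$-double coset, i.e.\ its Bruhat stratum, and that is much too coarse. As the paper uses in the proof of Corollary~\ref{Cor:anumber}, for $\SO(n,2)$ with $n\ge 2$ every EO stratum other than the $\mu$-ordinary and the superspecial one lies in a single Bruhat stratum. For the same reason, the decomposition relevant to your surjectivity step is $P\backslash G/Q$, not $P\backslash G/B$. Each $E_\mu$-orbit already lies in one $(P,Q)$-double coset, so the unipotent part of $E_\mu$ does not ``move'' elements into one.

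The missing idea is that in \cite{PWZ} injectivity and surjectivity come out of one and the same induction. For a fixed $(P,Q)$-double coset, the induction step gives a \emph{bijection} between the $E_\mu$-orbits it contains and the orbits of an induced connected zip datum on a Levi subgroup, whose isogeny is twisted by the double coset representative. On the Weyl group side, ${}^\mu W$ splits compatibly. Let $I,J$ be the types of $P$ and $Q$. The elements of ${}^\mu W$ in a given $(W_I,W_J)$-double coset are exactly the products $x\cdot y$, where $x\in{}^IW^J$ is the minimal double coset representative and $y$ runs through the minimal coset representatives of the induced datum.

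Iterating, an orbit is pinned down by a whole sequence of relative positions (the group-theoretic shadow of the canonical filtration), not by a single one. Injectivity then follows with no dimension count. Short of redoing that induction, you should cite the decomposition theorem of \cite{PWZ} together with \cite[Thm.~7.5]{PWZ}, exactly as the paper does.
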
    
	
	Sometimes, for the convenience of computing zip orbits and the index set ${}^\mu W$, one uses another algebraic group that has the same adjoint group as $G$. Let $(G, \mu)$ be a reductive pair as above, with $G$ defined over $\mathbb{F}_p$, and let
	\(
	f: G \to G'
	\)
	be a morphism of reductive groups over $\mathbb{F}_p$. Set
	\(
	\mu' := f \circ \mu.
	\)
	We then also have the zip stack
	\(
	G'\text{-}\mathsf{Zip}^{\mu'}
	\)
	for the reductive pair $(G', \mu')$ over $\kappa$. The map $f$ induces a morphism of $\kappa$-algebraic stacks,
	\[
	[f]: \Gzips \to G'\text{-}\mathsf{Zip}^{\mu'}.
	\]
	
	\begin{lemma}\label{Lem:ZipIdentity}
		If $f$ induces an isomorphism
		\(
		G^{\ad} \to G'^{\ad},
		\)
		then $[f]$ induces a homeomorphism
		\[
		|\Gzips| \cong |G'\text{-}\mathsf{Zip}^{\mu'}|.
		\]
		
	\end{lemma}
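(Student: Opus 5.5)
We reduce both sides to the same combinatorial parameter set ${}^\mu W$ via Theorem~\ref{Thm:ZipstackTopWeyl}, and then check that $[f]$ corresponds to the identity on this set. First, we note that $f$ is defined over $\Fp$ and $\mu'=f\circ\mu$, so $f$ carries the zip datum of $(G,\mu)$ to that of $(G',\mu')$. Concretely, $f(P_{\pm})\subseteq P'_{\pm}$, $f(L)\subseteq L'$, and $f$ commutes with Frobenius. Hence $f\times f$ maps $E_\mu$ into $E_{\mu'}$, and $[f]$ is the induced map on orbit spaces $E_\mu(\Fpbar)\backslash G(\Fpbar)\to E_{\mu'}(\Fpbar)\backslash G'(\Fpbar)$.

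Next, we choose compatible frames. Let $(B,T,g)$ be a frame for $Z^{\data}_\mu$. Since $\ker f$ is central (as $f^{\ad}$ is an isomorphism) and $f(G)\cdot Z(G')=G'$, the images $B':=f(B)Z(G')$ and $T':=f(T)Z(G')$ form a Borel pair of $G'$. Then $(B',T',f(g))$ satisfies conditions (i)--(iv): each condition concerns parabolics, Levis and Frobenius, and these are preserved because parabolics of $G'$ contain the center and $f$ respects $\mu,\mu'$ and Frobenius. The isomorphism $G^{\ad}\cong G'^{\ad}$ identifies the root data modulo centers. So $W\cong W'$ compatibly with $W_\mu\cong W_{\mu'}$ (the Levis correspond, $L'=f(L)Z(G')$), and therefore ${}^\mu W\cong{}^{\mu'}W'$. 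Under this identification, a representative $\dot w\in N_G(T)$ maps to a representative $f(\dot w)\in N_{G'}(T')$ of the same Weyl element.

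Then $[f]$ sends the orbit $E_\mu\cdot(g\dot w)$ to the orbit containing $f(g)f(\dot w)$, which is $E_{\mu'}\cdot(f(g)\dot w')$. So, in the frame parametrizations of Theorem~\ref{Thm:ZipstackTopWeyl}, the map $|[f]|$ is the bijection ${}^\mu W\to{}^{\mu'}W'$ just constructed. It remains to see that this bijection preserves the partial orders (Theorem~\ref{Thm:PartOrd}). The order is defined purely in terms of $(W,S)$, $W_\mu$, and the Frobenius action on $W$, all of which are matched by $f$. So it is order-preserving in both directions, hence a homeomorphism. Alternatively, continuity of $|[f]|$ is automatic since $[f]$ is a morphism of stacks. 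Being a continuous bijection between finite spaces whose closure relations agree, it is a homeomorphism.

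The main obstacle is verifying that the image triple is genuinely a frame, especially condition (iii), $\varphi(B'\cap L')=f(g)^{-1}B'f(g)\cap L'^{\varphi}$. Here one has to check that intersecting with Levis commutes with taking images times the center. I would handle this using the fact that every parabolic of $G'$, and every intersection of two such, contains $Z(G')$ and is the full preimage of its image in $G'^{\ad}$. Then each condition can be checked in the adjoint groups, where $f$ is an isomorphism.
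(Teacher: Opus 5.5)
Your argument is correct, but it does not follow the paper. The paper gives no proof: it records the lemma as well known and cites \cite[Lem.~4.22]{WZEoCycle}. You instead give a self-contained argument. You transport a frame $(B,T,g)$ of $Z^{\data}_\mu$ to $(B',T',f(g))$ for $Z^{\data}_{\mu'}$. You then observe that, in the parametrizations of Theorem~\ref{Thm:ZipstackTopWeyl}, $|[f]|$ becomes the bijection ${}^\mu W\cong{}^{\mu'}W'$ induced by the isomorphism of abstract zip data. That bijection is an order isomorphism for the orders of Theorem~\ref{Thm:PartOrd}, hence a homeomorphism.

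Everything you need follows from two facts: $f^{-1}(Z(G'))=Z(G)$, and $G'(\Fpbar)=f(G(\Fpbar))\,Z(G')(\Fpbar)$. In particular, the step you flag as the obstacle, condition (iii), is immediate without passing to adjoint groups. The first fact gives $B'\cap L'=f(B\cap L)Z(G')$ and $f(g)^{-1}B'f(g)\cap L'^{\varphi}=f(g^{-1}Bg\cap L^{\varphi})Z(G')$. Since $f$ commutes with Frobenius and $\varphi$ preserves $Z(G')(\Fpbar)$, (iii) for $G'$ then follows directly from (iii) for $G$.

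Two small points should be made explicit. First, $T'$ is best read as the unique maximal torus of $G'$ containing $f(T)$; on $\Fpbar$-points this is $f(T)Z(G')$. Second, you use that the framed orbit $E_{\mu'}\cdot(f(g)\dot w')$ does not depend on the representative $\dot w'$, which is part of \cite{PWZ}.

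The citation buys brevity. Your route proves a stronger, frame-compatible statement: $[f]$ is the identity on Weyl-group indices, not merely some homeomorphism. That is how the paper actually uses the lemma, for instance when it identifies the index sets ${}^JW$ for $\tilde{\mathcal G}_\kappa$, $\mathcal G^{\mathbb Q}_\kappa$ and $\mathcal G_\kappa$ in Remark~\ref{Rmk:EOCenIsogUnit}. The price is reliance on the full Pink--Wedhorn--Ziegler classification and closure-order theorem, which the paper assumes anyway.
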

	\begin{proof}
		This result is well known. See \cite[Lem.~4.22]{WZEoCycle} for a proof.
	\end{proof}

	\subsection{Functoriality of zip period maps}\label{S:ZipPerMap}

    Let $\Sh_{G, \kappa}$ be a mod $p$ Shimura variety of Hodge type. Recall from \cite{EOZhang} that the EO stratification of $\Sh_{G, \Fpbar}=\Sh_{G, \kappa}\otimes_{\kappa}\Fpbar$ is defined via taking the geometric fibers of the \emph{zip period map} (cf. Theorem \ref{Thm:EOAbel} below),
    \[\zeta: \Sh_{G, \kappa}\to \Gzips.\] It is a standard fact that the zip period map is functorial in the sense that if we have a morphism between geometric mod $p$ Shimura varieties $\Sh_{G, \Fpbar}\to \Sh_{G', \Fpbar}$, then we have a commutative diagram, 
		\begin{equation}
			\xymatrix{\Sh_{G, \Fpbar}\ar[r]\ar[d]& \Sh_{G', \Fpbar}\ar[d]\\ \Gzips\ar[r] & G'\text{-}\mathsf{Zip}^{\mu'}.
			}
		\end{equation}

	\section{Orthogonal and spin Shimura varieties}\label{S:ShvGSpinSO}
	\subsection{Orthogonal and Spin Groups}
    We recall basic facts about $\GSpin$ and $\SO$-Shimura varieties, mainly following \cite{KS} and \cite{CIMK}, and record the setup needed for the later sections.
	Let $(V,\Psi)$ be a quadratic space over $\Q$ with symmetric bilinear form
	$\Psi\colon V\times V\to\Q$, and let $q$ be the associated quadratic form
	defined by $q(v)=\Psi(v,v)$. Assume that the real base change $(V,\Psi)_\bR$
	has signature $(n,2)$. For simplicity, we refer to the quadratic space simply
	as $V$, with $\Psi$ and $q$ understood. We also assume the existence of a
	self-dual $\Z_{(p)}$-lattice $\Lambda\subseteq V$, which will be fixed later.
	Let $\SO(V)$ be the special orthogonal group determined by $(V,\Psi)$ over $\Q$.
	Recall that the Clifford algebra $C(V)$ of $(V, q)$ is
	\[
	C(V)=T(V)/\langle\,v^2-q(v):\,v\in V\,\rangle,
	\]
	where $T(V)$ is the tensor algebra of $V$. This algebra is a
	$\Z/2\Z$-graded associative $\Q$-algebra,
	\(
	C(V)=C^+(V)\oplus C^-(V), 
	\)
	with $C^+(V)$ a $\Q$-subalgebra. As a $\Q$-vector space, $C(V)$ has dimension
	$2^{n+2}$ and hence $\dim_\Q C^+(V)=2^{n+1}$. As a $C^+(V)$-algebra, $C(V)$ is
	generated by the image of $V$ (which lies in $C^-(V)$). Moreover, $C(V)$ is
	equipped with a canonical involution $x\mapsto x^*$, characterized by
	$(v_1\cdots v_m)^*=v_m\cdots v_1$ for all $v_1,\ldots,v_m\in V$.
	The formation of $C(V)$ and $C^+(V)$ is functorial in $\Q$-algebras; that is,
	for every $\Q$-algebra $R$ one has $C(V)_R=C(V_R)$ and $C^+(V)_R=C^+(V_R)$.
	
	Recall also that the spinor similitude group $\GSpin(V)$ is the group functor that
	assigns to each $\Q$-algebra $R$ the group
	\[
	\GSpin(V)(R)
	=\{\,g\in C^+(V_R)^\times : gV_Rg^{-1}=V_R,\ \eta(g)\in R^\times\,\},
	\]
	where $\eta\colon \GSpin(V)\to\mathbb{G}_{m,\Q}$ is the similitude character
	given by $\eta(g)=g^*g$, and whose kernel is the derived subgroup
	$\Spin(V)$. By definition, $\GSpin(V)$ acts on $V$ (via conjugation in the Clifford
	algebra), inducing a homomorphism
	\(
	\GSpin(V)\to \GL(V)
	\)
	that factors through $\SO(V)$. The induced orthogonal representation
	\(
	\pi: \GSpin(V)\to\SO(V)
	\)
	is surjective with kernel $\mathbb{G}_m$, where $\mathbb{G}_m$ embeds into
	$\GSpin(V)$ by mapping, for each $\Q$-algebra $R$, the units $R^\times$ into
	$C^+(V_R)^\times$ via the structure map $R\to C^+(V_R)=C^+(V)_R$. In particular, the map $\pi$ is a central isogeny.
	
	\subsection{Orthogonal and spin Shimura varieties}
	
	Define
	\[
	\mathbb{D}:=\{v\in V_\C \mid \Psi(v,v)=0,\;\Psi(v,\bar{v})<0\}/\C^\times
	\subseteq \mathbb{P}(V_\C)
	\]
	as the space of oriented negative $2$-planes in \(V_\bR\).
	Given a class \(z=a+ib\in V_\C\) with \(a,b\in V_\bR\) representing an
	element of \(\mathbb{D}\), the corresponding negative $2$-plane is
	\(\operatorname{Span}_\bR\{a,b\}\) with oriented basis \(\{a,b\}\).
	
	The group \(\SO(V)(\bR)\) acts on \(\mathbb{D}\) via its natural action on
	\(V_\bR\) (and hence on \(V_\C\)). The pair \((\SO(V),\mathbb{D})\) forms a
	Shimura datum in the sense of Deligne \cite{DeligneShimura}, with reflex field
	\(\Q\) for $n\geq 1$, and an imaginary quadratic field for $n=0$; see \S~\ref{S:0DimGSpin} below for a separate discussion on $0$-dimensional GSpin Shimura varieties. Denote by $E$ the reflex field of $(\SO(V),\mathbb{D})$.
	For each compact open subgroup \(\mathsf{K}\subseteq \SO(V)(\Af)\), denote by
	\(\Sh_{\mathsf{K}}(\SO(V),\mathbb{D})\) the associated orthogonal Shimura variety.
	This is a Deligne–Mumford stack over \(\Q\) whose $\C$-valued points, as a
	groupoid, are identified with the orbifold quotient
	\[
	\SO(V)(\Q)\backslash\bigl(\mathbb{D}\times \SO(V)(\Af)/\mathsf{K}\bigr).
	\]
	When \(\mathsf{K}\) is sufficiently small, \(\Sh_{\mathsf{K}}(\SO(V),\mathbb{D})\) is a
	quasi-projective \(E\)-scheme. Similarly, the action of \(\GSpin(V)(\bR)\) on \(V_\bR\) via the projection $\GSpin(V)(\R)\to \SO(\R)$ induces an action on
	\(\mathbb{D}\), yielding the spin Shimura datum \((\GSpin(V),\mathbb{D})\) and
	the corresponding Shimura variety \(\Sh_{\mathsf{K}'}(\GSpin(V),\mathbb{D})\) for
	each compact open \(\mathsf{K}'\subseteq \GSpin(V)(\Af)\).
	The orthogonal representation \(\pi\colon \GSpin(V)\to \SO(V)\) induces a
	morphism of Shimura data
	\(
	(\GSpin(V),\mathbb{D})\to (\SO(V),\mathbb{D})
	\)
	and, whenever \(\pi(\mathsf{K}')\subseteq \mathsf{K}\), a morphism of Shimura varieties
	\[
	\pi:\Sh_{\GSpin(V)}:= \Sh_{\mathsf{K}'}(\GSpin(V),\mathbb{D})
	\to
	\Sh_{\SO(V)}:= \Sh_{\mathsf{K}}(\SO(V),\mathbb{D}).
	\]
	These varieties are quasi-projective smooth schemes of dimension $n$ over $E$. 
	Via the Kuga–Satake construction, the Shimura variety
	\(\Sh_{\mathsf{K}}(\SO(V),\mathbb{D})\) embeds into a Siegel modular variety, making
	\(\Sh_{\mathsf{K}'}(\GSpin(V),\mathbb{D})\) a Hodge type Shimura variety and
	\(\Sh_{\mathsf{K}}(\SO(V),\mathbb{D})\) one of abelian type.
	More explicitly, choose an element \(\delta\in C^+(V)^\times\) such that
	\(\delta^*=-\delta\). Then the form \( \Psi_\delta(x,y)=\operatorname{Tr}(x\,\delta\,y^*) \) defines a perfect alternating pairing on the \(\Q\)-vector space \(C^+(V)\). Denote by \(\GSp(C^+(V),\Psi_\delta)\) the corresponding symplectic
	group over \(\Q\). Consider the left multiplication action of $\GSpin(V)$ on $C^+(V)$, which gives rise to an embedding of reductive $\Q$-groups, 
	\[
	\rho_{\KS}: \GSpin(V)\to \GSp(C^+(V), \Psi_{\delta})\cong \GSp_{2\rg}, \quad  \rg= 2^n.
	\]
	Indeed, one checks that $\Psi_{\delta}(gx, gy)=\eta(g) \Psi_{\delta}(x, y)$ for all $g\in \GSpin (V)$. 
	Thus we obtain the Siegel Shimura datum
	\((\GSp(C^+(V),\Psi_\delta),\mathbb{H}^\pm_{\rg})\). This embedding induces the following morphism of 
	Shimura data by Kuga-Satake construction, 
	\begin{equation}\label{Eq:SeigelEmbedd}
		(\GSpin(V),\mathbb{D})
		\hookrightarrow
		\bigl(\GSp(C^+(V),\Psi_\delta), \mathbb{H}_{\rg}^{\pm}\bigr)\cong \bigl(\GSp_{2\rg},\mathbb{H}^\pm_{\rg}\bigr),
	\end{equation}
	hence also morphism of Shimura varieties over $E$,
	\(
	\iota_{\KS}: \Sh_{\GSpin(V)}\to \Sh_{\GSp(C^+(V))}\cong \Sh_{\GSp_{2\rg}}.
	\)
	
	\subsection{Hodge cocharacters}
	Let the setting be as in the previous subsection.  Recall that for an oriented negative definite $2$-plane
	$W\subseteq V_\R$, the orientation endows $W$ with a canonical complex structure, hence a morphism
	\[
	h_{W}:\mathbb{S}=\mathrm{Res}_{\C/\R}\mathbb{G}_m \longrightarrow \SO(V)_\R
	\]
	characterized by the property that $h_W$ acts on $W\simeq \C$ by complex
	scalar multiplication and acts trivially on $W^\perp$.  Base-changing to $\C$
	and using $\mathbb S_\C\simeq \mathbb{G}_m\times \mathbb{G}_m$, we obtain the associated (inverse)
	Hodge cocharacter
	\[
	\mu_{W_0}:\mathbb{G}_{m,\C}\longrightarrow \SO(V)_\C,\qquad \mu_{W_0}(z)=h_{W_0,\C}(z,1),
	\]
	whose $\SO(V)(\C)$-conjugacy class depends only on the Shimura datum (hence is
	defined over the reflex field).  Fix once and for all a base point
	$W_0\in\mathbb D$. Suppose now that $n\geq 1$, then $E=\Q$ and one can choose a Witt decomposition over $\Q$,
	\[
	V = V_{hyp} \oplus V_{hyp}^{\perp}, \quad V_{hyp}=\Q\langle e, f\rangle,\qquad \Psi(e,e)=\Psi(f,f)=0,\ \Psi(e,f)=\tfrac12,
	\]
	such that under the induced decomposition over $\C$ the Hodge filtration attached
	to $W_0$ satisfies $V^{1,-1}=\C e$ and $V^{-1,1}=\C f$.  Choose a $\Q$-basis \( \{\delta_2,\dots,\delta_{n+1}\}\) of $V_{hyp}^{\perp}$. With respect to the
	ordered $\Q$-basis,
	\[
	\mathfrak B=\{\,\delta_1=e,\,\delta_2,\dots,\delta_{n+1},\
	\delta_{n+2}=f\,\},
	\]
	a standard representative of the conjugacy class $[\mu]_\C$ (for the inverse
	Hodge cocharacter) on the standard representation of $\SO(V)_\C$ is
	\[
	\mu_{\SO}:\mathbb{G}_{m,\C}\to \SO(V)_\C,\qquad t\longmapsto \diag(t,1,\dots,1,t^{-1}).
	\]
	
	Put $u:=ef\in C^+(V_{hyp})\subseteq C^+(V)$; then $u^2=u$ and $u^*=fe=1-u$.  Define
	\[
	\mu:\mathbb{G}_{m,\C}\to \GSpin(V)_\C\subseteq C^+(V)_\C{}^\times,\qquad
	\mu(t):=t\,u+(1-u).
	\]
	One checks that $\eta(\mu(t))=\mu(t)^*\mu(t)=t$, so $\mu$ indeed lands in
	$\GSpin(V)_\C$ with similitude character $t$.  Moreover, under the natural
	projection $\pi:\GSpin(V)\to \SO(V)$, the cocharacter $\mu$
	projects to $\mu_{\SO}$; in particular, $\mu$ is the standard lift of the inverse
	Hodge cocharacter determined by the chosen base point $W_0\in\mathbb D$. Clearly $\mu_{\SO}$ and $\mu$ are defined over $\Q$. From now on, we fix the choice
\[
u=\delta_1\delta_{n+2}\in C^+(V), \qquad \delta:=1-2u\in C^+(V).
\]
Then \(\delta^*=-\delta\), and we use this fixed \(\delta\) to define the perfect alternating form \(\Psi_{\delta}\) on \(C^+(V)\).  Over $\C$ we have the $u$-splitting
	$C^+(V_{\C})=u\,C^+(V_{\C})\oplus u^*\,C^+(V_{\C})$ with $u^*=1-u$, and one checks that
	$\mu_{\mathrm{Siegel}}:=\rho_{\KS}\circ\mu$ acts by scalar multiplication by $t$ on
	$u\,C^+(V_{\C})$ and trivially on $u^*\,C^+(V_{\C})$.  Thus, choosing a $\C$-basis of
	$u\,C^+(V_{\C})$ and the $\Psi_\delta$-dual basis on $u^*\,C^+(V_{\C})$, we obtain a
	symplectic basis for which $\Psi_\delta$ has matrix $\antidiag(I_{\rg},-I_{\rg})$ and
	$\mu_{\mathrm{Siegel}}$ is identified with the standard Siegel cocharacter
	$t\mapsto \diag(tI_{\rg},I_{\rg})$.

    If $n=0$, $(V, q)$ is negative definite and cannot be a hyperbolic plane, one needs to pass to the base change $V_\kk$ to split the quadratic form ($q$ becomes $x_1^2-x_2^2$) to obtain the analogues of the idempotent $u$ as above. In this case, we still have the cocharacter representative $\mu_{\SO}$ and $\mu$ described as above. But in this case, they are only defined over $\kk$ (rather than $\Q$).
	
    We assume the existence of a self-dual $\mathbb{Z}_{(p)}$-lattice $\Lambda \subseteq V$. When $n \ge 1$, after replacing $\Lambda$ by an isometric lattice, we may assume that $\Lambda := \mathbb{Z}_{(p)}\langle \delta_1, \dots, \delta_{n+2} \rangle$ is a self-dual lattice of $V$. Moreover, the cocharacter representatives $\mu_{\SO}$ and $\mu$ are defined over $O_{E,(p)}$. For our purpose, we choose a place $v$ of $E$ above $p$. Then $v=p$ in case $n\geq 1$ and $v$ is split or inert in $E=\kk$ when $n=0$; cf. \S~\ref{S:0DimGSpin}. Consider the base change to $O_{E_v}=W(\kappa)$ (see \S~\ref{S:Shv} for notation) of these cocharacters.  We then have a diagram of morphisms of reductive pairs over $W(\kappa)$:  
\begin{equation}\label{Eq:ReductivePairDiag}
\xymatrix{
(\GSpin(\Lambda), \mu) \ar[r] \ar[d] & (\GSp(C^+(\Lambda), \Psi_{\delta}), \mu_{\mathrm{Siegel}}) \\
(\SO(\Lambda), \mu_{\SO}). &
}
\end{equation}
We write \(\mathcal{S}h_{\SO(\Lambda)}, \mathcal{S}h_{\GSpin(\Lambda)}\), and \(\mathcal{S}h_{\GSp(C^+(\Lambda))}\) for the canonical integral models over \(W(\kappa)\) of \(\Sh_{\SO(V)}, \Sh_{\GSpin(V)}\), and \(\Sh_{\GSp(C^+(V))}\), respectively. Let
\[
\pi_{W(\kappa)} : \mathcal{S}h_{\GSpin(\Lambda)} \to \mathcal{S}h_{\SO(\Lambda)}, \quad
\iota_{\KS,W(\kappa)} : \mathcal{S}h_{\GSpin(\Lambda)} \to \mathcal{S}h_{\GSp(C^+(\Lambda))},
\]
be morphisms of \(W(\kappa)\)-schemes with generic fibers \(\pi_{E_v}\) and \(\iota_{\KS,E_v}\), respectively. Following our notational convention in \S~\ref{S:Shv}, we write \(\Sh_{\GSpin(\Lambda),\Fpbar}\), \(\Sh_{\SO(\Lambda),\Fpbar}\), and \(\Sh_{\GSp(C^+(\Lambda)),\Fpbar}\) for the corresponding geometric mod \(p\) fibers.

      \subsection{Interlude on zero dimensional GSpin Shimura varieties}\label{S:0DimGSpin}
   In the case \(n=0\), the quadratic space \(V\) has signature \((0,2)\), hence \(\dim_{\Q}V=2\) and \(V_{\R}\) is negative definite. Choosing an orthogonal basis \(\delta_1,\delta_2\) of \(V\), the even Clifford algebra is
\(
C^+(V)=\Q\oplus \Q\cdot(\delta_1\delta_2),
\)
and, since \(\delta_1\delta_2=-\delta_2\delta_1\), one has
\[
(\delta_1\delta_2)^2=-\,\delta_1^2\delta_2^2=-\,q(\delta_1)q(\delta_2).
\]
Therefore
\(
C^+(V)\cong \Q[X]/(X^2+q(\delta_1)q(\delta_2)).
\)
Since \(q(\delta_1)\) and \(q(\delta_2)\) are both negative over \(\R\), the algebra \(C^+(V)\) is an imaginary quadratic field; denote it by \(\kk\). Moreover, since \(\dim_{\Q}V=2\), the odd part of the Clifford algebra is exactly \(C^-(V)=V\). As multiplication preserves parity, every \(g\in C^+(V)^\times\) preserves \(V\) under conjugation. Since \(C^+(V)=\kk\) is commutative, the canonical involution on \(C^+(V)\) is the nontrivial automorphism of \(\kk/\Q\), so for every \(g\in C^+(V)^\times\) one has \(g^*g=N_{\kk/\Q}(g)\in \Q^\times\). Hence \(C^+(V)^\times=\GSpin(V)\). Therefore there is an isomorphism of \(\Q\)-algebraic groups
\[
\GSpin(V)\cong C^+(V)^\times\cong \mathrm{Res}_{\kk/\Q}\mathbb{G}_m,
\]
under which the similitude character \(\eta:\GSpin(V)\to \mathbb{G}_m\) is identified with the norm morphism \(N_{\kk/\Q}:\mathrm{Res}_{\kk/\Q}\mathbb{G}_m\to \mathbb{G}_m\). In particular, \(\Sh_{\GSpin(V)}\) is a zero-dimensional Shimura variety, with reflex field \(\kk\).

Let $\Lambda \subset V$ be a self-dual $\mathbb{Z}_{(p)}$-lattice, whose existence we have assumed. This lattice provides a reductive group scheme model $\mathcal{G} := \GSpin(\Lambda)$ over $\mathbb{Z}_{(p)}$ for the group $\GSpin(V)$. The self-duality of $\Lambda$ implies that $p$ is \emph{unramified} in the imaginary quadratic field $\kk$. Under the identification $\GSpin(V) \cong \mathrm{Res}_{\kk/\mathbb{Q}} \mathbb{G}_m$, the integral group scheme is an unramified torus given by $\mathcal{G} \cong \mathrm{Res}_{O_{\kk, (p)}/\mathbb{Z}_{(p)}} \mathbb{G}_m$. Consequently, the zero-dimensional Shimura variety $\Sh_{\GSpin(V)}$ admits an integral canonical model $\mathcal{S}h_{\GSpin(V)}$ over $O_{\kk, (p)}$ that is finite and \'etale. This model is isomorphic to a disjoint union of $\mathrm{Spec}O_{\kk, (p)}$. In particular, for each place $v$ of $\kk$ above $p$, we have a local integral model $\mathcal{S}h_{\GSpin(\Lambda), O_{\kk,v}}$ over $O_{\kk, v}$. Following \S~\ref{S:Shv} we write $\kappa$ for the residue field of $\kk_v$; then $\kappa = \mathbb{F}_p$ if $p$ splits in $\kk$, and $\kappa = \mathbb{F}_{p^2}$ if $p$ is inert in $\kk$. 

	\subsection{Natural embeddings of spin and orthogonal Shimura varieties}\label{S:ShiEmb} 
	Let $V$ be as above, and let $V'\subseteq V$ be a $\Q$-subspace such that
$(V',q'=q|_{V'})_{\R}$ is of signature $(n-1,2)$. Let $(\SO(V'), \mathbb{D}')$
be the Shimura datum attached to $(V', q')$. Let
$\mathfrak{B}=\{\delta_1, \ldots, \delta_{n+2}\}$ be the basis of $V$ fixed in
\S~2.3, and assume that
$V'=\Span_{\Q}\{\delta_1,\ldots,\widehat{\delta_{m+1}},\ldots,\delta_{n+2}\}$,
where $\widehat{\delta_{m+1}}$ means that $\delta_{m+1}$ is deleted. Here we have
set $m=\frac n2+1$ if $n$ is even and $m=\frac{n+1}{2}$ if $n$ is odd. Let
$u$ and $\delta$ be the elements fixed in \S~2.3;
we use this $\delta$ to define $\Psi_{\delta}$ on both $C^+(V')$ and $C^+(V)$.
Then we obtain morphisms of Shimura data,
	\[
	\begin{aligned}
		(\SO(V'), \mathbb{D}') \to (\SO(V), \mathbb{D}),\quad
		(\GSpin(V'), \mathbb{D}') \to (\GSpin(V), \mathbb{D}),\\
		\bigl(\GSp(C^+(V'), \Psi_{\delta}), \mathbb{H}^\pm_{\rg'}\bigr)
		\to
		\bigl(\GSp(C^+(V), \Psi_{\delta}), \mathbb{H}^\pm_{\rg}\bigr).
	\end{aligned}
	\]
	For the purposes of reduction modulo $p$, we also fix the following self-dual $\Z_{(p)}$-lattice for $V'$,
	\[
	\Lambda':=\Span_{\Z_{(p)}}\{ \delta_1,\ldots,\widehat{\delta_{m+1}}, \ldots,\delta_{n+2}\}= \Lambda\cap V'.
	\]
   
    Let \(E\) be the reflex field of \((\SO(V'),\mathbb{D}')\), and choose a place \(v\) of
\(E\) above \(p\). As usual, we write \(\kappa\) for the residue field of \(E_v\). Let \(\mu'_{\SO}\), \(\mu'\), and \(\mu'_{\mathrm{Siegel}}\) be cocharacters of
\(\SO(\Lambda')\), \(\GSpin(\Lambda')\), and \(\GSp(C^+(\Lambda'),\Psi_{\delta})\),
respectively, defined over \(O_{E_v}=W(\kappa)\), as in \S~2.3. Since
\((\SO(V),\mathbb{D})\) has reflex field \(\Q\) for all \(n\ge 1\), the unprimed
cocharacters are defined over \(\Z_p\) and will be viewed over \(W(\kappa)\) by base
change. With these data, we have the following commutative
	diagram of morphisms between reductive pairs over $W(\kappa)$, and between morphisms of Shimura
	varieties over $W(\kappa)$ respectively (cf. convention in \S~\ref{S:Shv}):
	\begin{equation}\label{Eq:SpinEOProjInt}
		\begin{minipage}{0.48\textwidth}
			\centering
			\xymatrix{
				(\GSpin(\Lambda'), \mu') \ar[d]\ar[r]& (\GSpin(\Lambda), \mu)\ar[d]\\
				(\SO(\Lambda'), \mu'_{\SO})\ar[r] & (\SO(\Lambda), \mu_{\SO}),
			}
		\end{minipage}\hfill
		\begin{minipage}{0.48\textwidth}
			\centering
			\xymatrix{\mathcal{S}h_{\GSpin(\Lambda')} \ar[d]\ar[r]& \mathcal{S}h_{\GSpin(\Lambda)}\ar[d]\\
				\mathcal{S}h_{\SO(\Lambda')}\ar[r] & \mathcal{S}h_{\SO(\Lambda)}.}
		\end{minipage}
	\end{equation}
	Moreover, after choosing the compatible symplectic bases discussed below, we obtain the following commutative diagram; its left square is a diagram of morphisms of reductive pairs, and the right horizontal arrows are the chosen identifications with the standard Siegel data:
	\begin{equation}
		\xymatrix{
			(\GSpin(\Lambda'), \mu') \ar[d]\ar[r]& (\GSp(C^+(\Lambda'), \Psi_{\delta}), \mu'_{\mathrm{Siegel}})\ar[d]\ar[r]^{\quad \quad\cong }& (\GSp_{2\rg'}, \mathbb{H}_{\rg'}^\pm) \ar[d]\\
			(\GSpin(\Lambda), \mu) \ar[r] & (\GSp(C^+(\Lambda), \Psi_{\delta}), \mu_{\mathrm{Siegel}})\ar[r]^{\quad \quad\cong }& (\GSp_{2\rg}, \mathbb{H}_{\rg}^\pm),
		} 
	\end{equation}
	and between morphisms of Shimura varieties over $W(\kappa)$,
	\begin{equation}\label{Eq:EmdKSInt}
		\xymatrix{\mathcal{S}h_{\GSpin(\Lambda')} \ar[d]\ar[r]& \mathcal{S}h_{\GSp(C^+(\Lambda'))}\ar[d]\ar[r]^{\cong}&\mathcal{S}h_{\GSp_{2\rg'}}\ar[d]\\
			\mathcal{S}h_{\GSpin(\Lambda)} \ar[r]& \mathcal{S}h_{\GSp(C^+(\Lambda))}\ar[r]^{\cong}&\mathcal{S}h_{\GSp_{2\rg}}.}
	\end{equation}
    Here we need to clarify the right commutative diagrams. Using the decomposition $C^+(\Lambda)= C^+(\Lambda')\oplus C^-(\Lambda')\delta_{m+1} $, one may choose a compatible symplectic basis for $C^+(\Lambda')$ and $C^+(\Lambda)$ such that under the resulting identifications
\[
\GSp(C^+(\Lambda'),\Psi_\delta)\simeq \GSp_{2\rg'}
\qquad\text{and}\qquad
\GSp(C^+(\Lambda),\Psi_\delta)\simeq \GSp_{2\rg},
\]
with \(\rg=2\rg'\), the natural map
\(
\GSp(C^+(\Lambda'),\Psi_\delta)\longrightarrow \GSp(C^+(\Lambda),\Psi_\delta)
\)
is given by
\[
\begin{pmatrix}
A&B\\ C&D
\end{pmatrix}
\longmapsto
\begin{pmatrix}
A&0&B&0\\
0&A&0&B\\
C&0&D&0\\
0&C&0&D
\end{pmatrix}
\]
This means that if $\mu'_{\mathrm{Siegel}}$ is given by $t\mapsto \diag (t I_{\rg'}, I_{\rg'})$, $\mu_{\mathrm{Siegel}}$ is given by $t\mapsto \diag (t I_{\rg}, I_{\rg})$. Recall $\rg'=2^{n-1}$ and $\rg=2\rg'$. Moreover, the map $\mathcal{S}h_{\GSp_{2\rg'}}\to \mathcal{S}h_{\GSp_{2\rg}}$ of Siegel modular varieties is
	given by $(A, \lambda) \mapsto (A\times A, \lambda \times \lambda)$, where $(A, \lambda)$ is a principally
	polarized abelian variety of dimension $\rg'$. 
	
	We assume also that every morphism of Shimura varieties in \eqref{Eq:SpinEOProjInt} and
\eqref{Eq:EmdKSInt} is an embedding by carefully choosing level subgroups away from $p$.

	\section{Normalized reductive pair $(\SO(n,2)_{\Fpbar}, \mu)$ and the associated zip datum}\label{S:RedPairNorm}
	
	\subsection{Reductive triples and associated $G$-zip data}
	
	By a \emph{reductive triple} over a finite field $k \supseteq \Fp$ we mean
data $(G, G_k, [\mu]_k)$, where $G$ is a reductive group over $\Fp$,
$G_k := G \otimes_{\Fp} k$, and $[\mu]_k$ is a $G_k(\bar{k})$-conjugacy class
of cocharacters of $G_{\bar{k}}$ defined over $k$. Since $k$ is finite, such a
class admits a representative $\mu:\mathbb{G}_{m,k} \to G_k$. In this paper, we
usually fix such a representative and still refer to $(G, G_k, \mu)$ as a
reductive triple; cf. \S~\ref{S:Shv}. 

We now consider the reductive triple
\(
(\SO(\Lambda)_{\Fp}, \SO(\Lambda)_{\kappa}, [\mu]_{\kappa})
\)
attached to the geometric mod $p$ Shimura variety
$\Sh_{\SO(\Lambda),\Fpbar}$ introduced in the previous section, and fix a
representative
\(
\mu \colon \mathbb{G}_{m,\kappa} \to \SO(\Lambda)_{\kappa}
\)
of $[\mu]_{\kappa}$. The associated $G$-zip stack over $\kappa$ is,
\[
\SO(\Lambda)_{\kappa}\text{-}\mathsf{Zip}^{\mu}
= [E_{\mu} \backslash \SO(\Lambda)_{\kappa}].
\]

	\subsection{Normalization isomorphisms}\label{S:NormPair}
    Once an $\Fpbar$-basis $\mathfrak{B}=\{\delta_1,\delta_2,\ldots,\delta_{n+2}\}$ of $\Lambda_{\Fpbar}$ is fixed, we write $\SO(n,2)_{\Fpbar}$ for the image of
\[
\SO(\Lambda)_{\Fpbar}\subseteq \GL(\Lambda)_{\Fpbar}\overset{\epsilon_{\mathfrak B}}{\cong}\GL_{n+2,\Fpbar}.
\]
If $\mathfrak B$ is defined over a subfield $k\subseteq \Fpbar$, we similarly write $\SO(n,2)_k$ for the corresponding image of $\SO(\Lambda)_k$ in $\GL_{n+2,k}$ whenever it is necessary to emphasize the field of definition. We call the resulting isomorphism $\SO(\Lambda)_{k}\cong \SO(n,2)_k$ the \emph{normalization isomorphism} (w.r.t. $\mathfrak{B}$). Transporting the chosen reductive triple along $\epsilon_{\mathfrak B}$ yields a normalized geometric reductive pair $(\SO(n,2)_{\Fpbar},\mu)$ and a normalized geometric $G$-zip group datum
\begin{equation}
(Z_{\mu}^{\data})_{\Fpbar}=(\SO(n,2)_{\Fpbar},\,P,\,Q=P_-^{\varphi},\,\varphi\colon L\to L^{\varphi}),
\end{equation}
and hence the associated \emph{normalized} geometric $G$-zip stack
\(
\SO(n,2)_{\Fpbar}\text{-}\mathsf{Zip}^{\mu}=[E_{\mu}\backslash \SO(n,2)_{\Fpbar}]
\); see \S~\ref{S:Zipstack} for the notation. We include the subscript $\Fpbar$ in $(Z_{\mu}^{\data})_{\Fpbar}$ to emphasize that we work geometrically. One shall see that the normalized zip group datum depends on the choice of normalization isomorphism, especially its \emph{field of definition} $k$. 

From now on, whenever we write $\SO(n,2)_{\Fpbar}$ or say that a reductive pair is ``normalized'', a basis $\mathfrak B$, clear from the context, is understood to have been fixed. We choose $\mathfrak B$ so that the resulting cocharacter
\(
\mu:\mathbb{G}_{m,\Fpbar}\to \SO(n,2)_{\Fpbar}
\)
is given by
\(
t\mapsto \diag(t,1,\ldots,1,t^{-1}).
\)
We also let \(m\) denote the reductive rank of \(\SO(n,2)_{\Fpbar}\); thus
\(m=\frac{n+1}{2}\) when \(n\) is odd, and \(m=\frac{n}{2}+1\) when \(n\) is even. This agrees with the convention in \S~\ref{S:ShiEmb}. The root system of $\SO(n,2)_{\Fpbar}$ is of Dynkin type $B_m$ (resp.\ $D_m$) if $n$ is odd (resp.\ even). Finally, we recall that a nondegenerate quadratic form on $\Lambda_{\Fp}$ is determined up to isomorphism by its discriminant in $\mathbb{F}_p^\times/\mathbb{F}_p^{\times 2}$ (see \cite[p.~35]{course}); we will use this implicitly when writing explicit matrix expressions for $q$.

	\subsection{The case $n $ is odd}\label{S:ZipdataNormOdd}
	In this case, $\kappa=\Fp$. By the classification of quadratic forms over $\Fp$, under suitable $\Fp$-basis $\mathfrak{B}$ of $\Lambda_{\Fp}$, $q$ takes the form,
	\begin{equation}\label{Eq:StdFormOdd}
		q = x_1x_{n+2} + x_2x_{n+1} + \cdots + x_mx_{m+2} + c\,x_{m+1}^2,   \quad \quad \text { (standard form) }
	\end{equation}
	for some $c \in \mathbb{F}_p^\times$. We use the basis $\mathfrak{B}$ to obtain the normalization isomorphism $\SO(\Lambda)_{\Fp}\cong \SO(n, 2)_{\Fp}$. Clearly, we have $\SO(n,2)_{\Fp}= \SO(J_{n+2})_{\Fp}$, consisting of matrices $X$ in  $\GL_{n+2, \Fp}$ such that $X^tJ_{n+2} X=J_{n+2}$ and $\det(J_{n+2})=1$, where $J_{n+2}$ is the Gram matrix,
	\[J_{n+2}=\antidiag(1/2, \ldots, 1/2, c, 1/2, \ldots, 1/2).\] 
	Then the transported conjugacy class $[\mu]_{\Fpbar}$ on $\SO(n, 2)_{\Fpbar}$ admits a representative 
	\begin{equation}\label{Eq:MuNorm}
		\mu: \mathbb{G}_{m, \Fpbar}\to \SO(n, 2)_{\Fpbar}, \quad t\mapsto \diag(t, 1,\ldots, 1, t^{-1}), 
	\end{equation}
	which is already defined over $\Fp$. The normalized geometric $G$-zip datum $(Z_{\mu}^{\data})_{\Fpbar}$ admits concrete description as follows. The parabolic $P$ (resp. $Q$) consists of block upper triangular matrices 
	\[
	\begin{pmatrix}
		\ast & \ast & \ast\\
		0 & A & \ast\\
		0 & 0 & \ast
	\end{pmatrix} \, \text{ resp. }  \begin{pmatrix}
		\ast & 0 & 0\\
		\ast & A & 0\\
		\ast & \ast & \ast
	\end{pmatrix},  \quad  A\in \SO(J_n),
	\] 
	where $J_n\in \GL_{n}$ is of the same form as $J_{n+2}$. It is easy to see that  $L\subseteq \SO(n, 2)$ consists of matrices of the form $\diag(t, A, t^{-1})$, with $t\in \mathbb{G}_m$, and $A\in \SO(J_{n})$. In particular, we have $L\cong \mathbb{G}_m\times \SO(J_n)$. The Frobenius map $\varphi: L\to L$ involved in $(Z_{\mu}^{\mathrm{data}})_{\Fpbar}$ is simply the usual Frobenius $X\mapsto \Frob(X)=X^{(p)}$, hence inducing the identity map on the Weyl group  of $L$. 
	
	\subsection{The case $n$ is even}\label{S:ZipdataNormEven}
	By the classification of quadratic forms over $\Fp$, there exists an $\Fp$-basis 
	$\mathfrak{B} = \{\delta_1,\delta_2,\ldots,\delta_{n+2}\}$ of $\Lambda_{\Fp}$ with respect to which $q$ has one of the following forms:
	\begin{equation}\label{Eq:StdFormEven}
		q =
		\begin{cases}
			x_1x_{n+2} + x_2x_{n+1} + \cdots + x_m x_{m+1}, & 
			\text{(standard split form)},\\[2pt]
			x_1x_{n+2} + \cdots + x_{m-1}x_{m+2} + x_m^2 - c x_{m+1}^2, &
			c \in \mathbb{F}_p^{\times} \setminus \mathbb{F}_{p}^{\times 2}.
		\end{cases}
	\end{equation}
	
	Clearly $\SO(\Lambda)_{\Fp}$ is split over $\Fp$ if and only if we are in the first case\footnote{Equivalently, $\SO(\Lambda_{\Fp})$ is split over $\Fp$ if and only if the Legendre symbols satisfy $\bigl(\tfrac{-1}{p}\bigr)^m = \bigl(\tfrac{\det(q)}{p}\bigr)$.}. In the split case we use $\mathfrak{B}$ to obtain the normalization isomorphism
	$\SO(\Lambda)_{\Fp} \cong \SO(n,2)_{\Fp}$.
	The normalized conjugacy class $[\mu]_{\Fpbar}$ on $\SO(n,2)_{\Fpbar}$ and the normalized $G$-zip datum $(Z_{\mu}^{\data})_{\Fpbar}$ admit the same description as in the case $n$ odd, after redefining $J_{n+2}$ there to be $\antidiag(1/2,\ldots,1/2)$. In particular, here we have chosen $\mu$ as in \eqref{Eq:MuNorm}, which is again defined over $\Fp$ already. 
	
	In the nonsplit case we pass to $\Fpsq$ to split $q$.
	Consider $W = \langle \delta_m,\delta_{m+1} \rangle$ with $q_m := x_m^2 - c\,x_{m+1}^2$ on $W$.
	Choose $\sqrt{c} \in \mathbb{F}_{p^2}^\times$ and set
	\begin{equation}\label{Eq:ChangBasSplit}
		\delta_m' := \tfrac12\!\Bigl(\delta_m + \frac{\delta_{m+1}}{\sqrt{c}}\Bigr),
		\qquad
		\delta_{m+1}' := \tfrac12\!\Bigl(\delta_m - \frac{\delta_{m+1}}{\sqrt{c}}\Bigr).
	\end{equation}
	In the coordinates $(x_m,x_{m+1})$ relative to the basis $(\delta_m',\delta_{m+1}')$ we have $q_m = x_m x_{m+1}$.
	Consider the $\Fpsq$-basis
	$\mathfrak{B}^{\new} := \{\delta_1,\ldots,\delta_{m-1},\delta_m',\delta_{m+1}',\ldots,\delta_{n+2}\}$
	of $\Lambda_{\Fpsq}$, and use it to obtain the normalization
	$\SO(\Lambda)_{\Fpsq} \cong \SO(n,2)_{\Fpsq}$. 
	We then still have $\SO(n,2)_{\Fpsq} = \SO(J_{n+2})_{\Fpsq}$. Moreover, the transported conjugacy class $[\mu]_{\Fpbar}$ and the parabolic $P$ in the normalized $G$-zip datum $(Z_{\mu}^{\data})_{\Fpbar}$ admit a description analogous to the case $n$ odd. As in the split case, here we again have chosen $\mu$ as in \eqref{Eq:MuNorm}, which is defined over $\Fp$ already. 
	
	However, since the change of basis from $\mathfrak{B}$ to $\mathfrak{B}^{\new}$ is defined over $\Fpsq$, the transported Frobenius map $\varphi: \SO(n,2)_{\Fpbar} \to \SO(n,2)_{\Fpbar}$ becomes the twisted one
	\begin{equation}\label{Eq:TwistFrobL}
		X \longmapsto h\,X^{(p)} h^{-1},
		\qquad
		h = \diag(1,\ldots,1,\antidiag(1,1),1,\ldots,1) \in \GL_{n+2}(\Fp),
	\end{equation}
	where $\antidiag(1,1)$ occupies the central $m$-th and $(m+1)$-st rows and columns.
	The isogeny $\varphi: L \to L$ is induced by this map and is given by the same formula. Here note that although $h$ does not belong to $\SO(n,2)(\Fpbar)$ for determinant reasons, it still acts by conjugation on $\SO(n,2)_{\Fpbar}$ and on $L$. Since $h$ normalizes $\mu$, we still have $Q = P_{\mu^{-1}}^{\varphi} = P_{\mu^{-1}}$ and $L^{\varphi} = L$ as subgroups of $\SO(n,2)_{\Fpbar}$.
	
	To summarize, in both the split and nonsplit cases, we obtain the same normalized geometric reductive pair
\[
(\SO(n,2)_{\Fpbar},\mu),
\]
where $\mu$ is given by $t \mapsto \diag(t,1,\ldots,1,t^{-1})$, and where $\SO(n,2)_{\Fpbar}=\SO(J_{n+2})_{\Fpbar}$. Thus, we obtain a uniform description of the triple $(\SO(n,2)_{\Fpbar}, P, Q)$ in the normalized zip group datum
\[
(Z_{\mu}^{\data})_{\Fpbar}=(\SO(n,2)_{\Fpbar}, P, Q,\,\varphi\colon L\to L).
\]
Indeed, they are determined completely by $(\SO(n,2)_{\Fpbar},\mu)$. In particular, the Weyl group $W$ of $\SO(n,2)_{\Fpbar}$, the subgroup $W_{\mu}\subseteq W$, and the corresponding sets of simple reflections for $L\subseteq \SO(n,2)_{\Fpbar}$ do not depend on the splitting behavior of $q$, when computed with respect to $(B_{\SO},T_{\SO})$, where $B_{\SO}=B\cap \SO(n,2)_{\Fpbar}$ and $T_{\SO}=T\cap \SO(n,2)_{\Fpbar}$. Here $B$ is the standard upper triangular Borel subgroup and $T$ is the diagonal maximal torus of $\GL_{n+2,\Fpbar}$. However, there is a clear \emph{distinction} for the ``Frobenius'' isogeny $\varphi\colon L\to L$. We shall see in \S~\ref{S:Frames} that the different isogenies $\varphi\colon L\to L$ lead to different choices of frames for $(Z_{\mu}^{\data})_{\Fpbar}$.
	
	\section[]{Weyl groups and minimal length representatives (orthogonal)}\label{S:WeylEOIndex}
	
	Recall that we have obtained normalized reductive pairs $(\SO(n,2)_{\Fpbar}, \mu)$ in \S\ref{S:ZipdataNormOdd} and \S\ref{S:ZipdataNormEven}. In this section we collect the standard permutation descriptions of the Weyl group $W$ of $\SO(n,2)_{\Fpbar}$, the Weyl subgroup $W_{\mu}$ of $L = \mathrm{Cent}(\mu) \subseteq \SO(n,2)_{\Fpbar}$, and the set ${}^{\mu}W$ of the minimal length representatives for the coset space $W_{\mu}\backslash W$, following \cite{bruh}.

	\subsection{The Weyl groups of $\SO(n, 2)_{\Fpbar}$}\label{S:WeylGpSO}

	\subsubsection{The case $n$ is odd}\label{S:WeylOdd}
	In this case, $m=(n+1)/2$ and the maximal torus $T_{\SO}$ consists of diagonal matrices of the form $\diag(t_1, \ldots, t_{m}, 1, t_{m}^{-1}, \ldots, t_1^{-1})$. With respect to the Borel pair $(B_{\SO}, T_{\SO})$, the Weyl group $W$ of $\SO(n, 2)_{\Fpbar}$ is identified with the following group of permutations,
	$$W=\{\sigma\in S_{n+2}\mid\sigma(i)+\sigma(n+3-i)=n+3,\ \forall\ 1\leq i\leq n+2\}.$$
	The simple reflections are then given by $\{s_i\}$, $i=1,2,\cdots, m$, where
	\begin{equation*}
		s_i=\left\{ \begin{aligned}
			(i,i+1)(n+2-i,n+3-i), \ \ \ \ \ i< m; \\
			(m,m+2),\ \ \ \ \ \ \ \ i=m.
		\end{aligned} \right.
	\end{equation*} 
	For the longest element $w_0\in W$, we have $w_0=(1, n+2)(2, n+1)\ldots (m, m+2)$.
	
	\noindent \textbf{Matrix representative}\label{S:MatRep}
	We choose $P_{s_i}\in\SO(n, 2)(\Fpbar)$ as follows. For $1\le i<m$: $P_{s_i}$ is the identity except it swaps the $i$-th and $(i{+}1)$-st
	coordinates and simultaneously swaps the $(n{+}2{-}i)$-th and $(n{+}3{-}i)$-th coordinates.
	For $i=m$: $P_{s_m}$ is the identity except on indices $(m,m{+}1,m{+}2)$, where it is given by
	\[
	\begin{pmatrix}0&0&1\\[2pt]0&-1&0\\[2pt]1&0&0\end{pmatrix}
	\quad\text{(i.e. swap $m\leftrightarrow m{+}2$ and send $m{+}1$ to $-\, (m{+}1)$).}
	\]
	The minus sign here is to guarantee $\det(P_{s_m})=1$. For any reduced word
	$w=s_{i_1}\cdots s_{i_r}$ in $W$, set $P_w:=P_{s_{i_1}}\cdots P_{s_{i_r}}$; this is well-defined
	and $P_{w_1w_2}=P_{w_1}P_{w_2}$, for all $w_1, w_2\in W$.
	
	\subsubsection{The case $n$ is even}
	In this case, $m=n/2+1$, and the maximal torus $T_{\SO}$ consists of diagonal matrices of the form $\diag(t_1, \ldots, t_{m}, t_{m}^{-1}, \ldots, t_1^{-1})$. With respect to $(B_{\SO}, T_{\SO})$, the Weyl group $W$ of $\SO(n, 2)_{\Fpbar}$ is identified with:
	\begin{equation*}
		W=\left\{ \begin{aligned}
			\sigma\in S_{n+2}\mid\ &\sigma(i)+\sigma(n+3-i)=n+3, \forall\ 1\leq i\leq n+2;\\
			&\ \ \ \text{and }\#\{1\leq i\leq m\mid \sigma(i)>m\}\text{ is even.}
		\end{aligned} \right\}
	\end{equation*}
	Its simple reflections are given by $\{s_i\}$, $i=1,2,\cdots, m$, where
	\begin{equation*}
		s_i=\left\{ \begin{aligned}
			(i,i+1)(n+2-i,n+3-i), \ \ \ \ \ i< m; \\
			(m-1,m+1)(m,m+2),\ \ \ \ \ \ i=m.
		\end{aligned} \right.
	\end{equation*}
	For the longest element $w_0\in W$, we have
	\[w_0=\begin{cases}
		(1, n+2)\ldots (m, m+2)& \text{ if } m\equiv 0 \pmod{4}\\
		(1, n+2)\ldots (m-1, m+3)& \text { if } m\equiv  2 \pmod{4}. 
	\end{cases}\] 
	
	\noindent \textbf{Matrix representatives} We also fix the matrix representatives $P_{s_i}\in \SO(n, 2)_{\Fpbar}$ as well as $P_{w_i}$ in $W$ as in the odd case: for $i=m$, we take $P_{s_m}$ to be 
	\[
	\diag(1,  \ldots, 1, A, 1, \ldots, 1), \text{ with } A =\begin{pmatrix}
		0&0&1&0\\
		0&0&0&1\\
		1&0&0&0\\
		0&1&0&0
	\end{pmatrix}.
	\]

	\subsection{Minimal coset representatives for $W_{\mu}\backslash W$}
	The setup is as in \S~\ref{S:WeylGpSO}. Recall that $W_{\mu}\subseteq W$ is the Weyl group of $L$ with respect to the Borel pair $(B_{\SO}\cap L, T_{\SO})$. It follows from the matrix description of $L$,
	\[
	L = \bigl\{\diag(t,A,t^{-1}) \mid t \in \mathbb{G}_m,\ A \in \SO(n-2,2)\bigr\},
	\]
	together with the description of $W$ in \S~\ref{S:WeylGpSO}, that
	\(
	W_{\mu} = \langle s_2,\ldots,s_m \rangle \subseteq W,
	\)
	since the $\mathbb{G}_m$ factor in $L$ does not contribute to the Weyl group. The description of ${}^\mu W$ below is standard.

	\subsubsection{The case $n$ is odd:} \label{S:EOindexOdd} For $0\leq i\leq n$, let
	\begin{equation*}
		w_i=\left\{ \begin{aligned}
			s_1s_2\cdots s_i, \ \ \ \ \ i\leq m; \\
			s_1\cdots s_ms_{m-1}\cdots s_{n-i+1},\ \ \ \ \ \ i>m.
		\end{aligned} \right.
	\end{equation*}
	Then ${}^\mu W=\{w_i\}_{0\leq i\leq n}$. In particular, for each integer $i$ with $0\leq i\leq n$, there is a unique element in ${}^\mu W$, namely $w_i$, of length $\ell(w_i)=i$. 
	%Under the order-preserving bijection ${}^\mu W \to {}^\mu W$ in Lemma \ref{Lem:Swp}, each $w_i $ is exchanged with $w_{n+1-i}$. 
	
	\subsubsection{The case $n\geq 2$ is even} \label{S:EOindexEven}
	For $0\leq i\leq n$, let
	\begin{equation*}
		w_i=\left\{ \begin{aligned}
			s_1s_2\cdots s_i, \ \ \ \ \ i\leq m; \\
			s_1\cdots s_ms_{m-2}\cdots s_{n-i+1},\ \ \ \ \ \ i>m.
		\end{aligned} \right.
	\end{equation*}
	Let $w^{\dagger}_{m-1}=s_1\cdots s_{m-2}s_m$, then ${}^\mu W=\{w_i\}_{0\leq i\leq n}\cup \{w^{\dagger}_{m-1}\}$. Here the convention is that when $m=2$, $w_1^{\dagger}=s_2$. For each $i\neq m-1$, $w_i\in {}^\mu W$ is the unique element with $\ell(w_i)=i$, while both $w_{m-1}$ and $w_{m-1}^{\dagger}$ have length $m-1$.
	%Under the order-preserving bijection ${}^\mu W \to {}^\mu W$ in Lemma~\ref{Lem:Swp}, each $w_i$ is exchanged with $w_{n+1-i}$ for $i\neq m-1$, and $w_{m-1}$ swaps with $w_{m-1}'$.
	
	\section{Choosing frames $Z_{\mu}^{\mathrm{data}}$} \label{S:Frames}
	
	Assume that $n \geq 1$ throughout this section. In the next two lemmas we choose frames for the normalized geometric zip datum 
	\(
	(Z_{\mu}^{\mathrm{data}})_{\Fpbar}
	= \bigl(\SO(n,2)_{\Fpbar}, P, Q, \varphi:L \to L\bigr),
	\)
	obtained in \S~\ref{S:ZipdataNormOdd} and $\S~\ref{S:ZipdataNormEven}$.
	
	\subsection{The case $n$ is odd}
    
\begin{lemma} \label{Lem:TwistorOdd}
    With notations as in \S~\ref{S:WeylGpSO} ($n$ odd), the triple $(B_{\SO}, T_{\SO}, g:=P_{w_{0, \mu}w_0})$ is a frame for the normalized $G$-zip data $(Z_{\mu}^{\data})_{\Fpbar}$ in \S~\ref{S:ZipdataNormOdd}. 
\end{lemma}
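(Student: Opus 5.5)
We verify the four frame conditions (i)–(iv) directly from the explicit matrix descriptions in \S~\ref{S:ZipdataNormOdd} and \S~\ref{S:WeylOdd}. Here $w_{0,\mu}$ denotes the longest element of $W_\mu=\langle s_2,\dots,s_m\rangle$.

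Two features of the odd case do most of the work. First, $\varphi$ is the plain Frobenius $X\mapsto X^{(p)}$, and $Q=P_{\mu^{-1}}$, $L^\varphi=L$ as subgroups. Second, $B_{\SO}$ and $T_{\SO}$ are defined over $\Fp$, so $\varphi(T_{\SO})=T_{\SO}$ and $\varphi(B_{\SO}\cap L)=B_{\SO}\cap L$. Every $P_w$ is a signed permutation matrix in $\SO(n,2)(\Fp)$ that normalizes $T_{\SO}$ and represents $w$.

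\textbf{Condition (i).} $B_{\SO}\subseteq P$ is immediate, since $P$ is the block upper triangular parabolic containing the upper triangular Borel.

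\textbf{Condition (iv).} Because $g=P_{w_{0,\mu}w_0}$ normalizes $T_{\SO}$, we get $g^{-1}T_{\SO}g=T_{\SO}=\varphi(T_{\SO})$. Moreover $T_{\SO}\subseteq L$.

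\textbf{Condition (ii).} We have $g^{-1}B_{\SO}g={}^{w_0w_{0,\mu}}B_{\SO}$, using $w_0^{-1}=w_0$ and $w_{0,\mu}^{-1}=w_{0,\mu}$. We must show this Borel lies in $Q=P_{\mu^{-1}}$, the opposite parabolic, which contains the lower triangular Borel $B^-={}^{w_0}B_{\SO}$. Write ${}^{w_0w_{0,\mu}}B_{\SO}={}^{w_0}\bigl({}^{w_{0,\mu}}B_{\SO}\bigr)$. Since $w_{0,\mu}\in W_\mu$, the Borel ${}^{w_{0,\mu}}B_{\SO}$ lies in $P$, so ${}^{w_0w_{0,\mu}}B_{\SO}\subseteq{}^{w_0}P$.

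In the odd case, type $B_m$, $w_0=-1$ on roots. Hence ${}^{w_0}P=P_{\mu^{-1}}=Q$. Concretely, the roots of ${}^{w_0}P$ are $-(\Phi_L\cup\Phi_{U_+})=\Phi_L\cup\Phi_{U_-}$. Alternatively, one can check directly that the positive roots of ${}^{w_0w_{0,\mu}}B_{\SO}$ are (roots of $L$ positive for $B\cap L$) $\cup$ $\Phi_{U_-}$.

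\textbf{Condition (iii).} From the computation in (ii), the roots of ${}^{w_0w_{0,\mu}}B_{\SO}$ in $\Phi_L$ are $w_0w_{0,\mu}(\Phi^+)\cap\Phi_L=w_0(\Phi_L^-)=\Phi_L^+$. The last equality uses $w_0=-1$ and the fact that $w_{0,\mu}$ sends $\Phi_L^+$ to $\Phi_L^-$. Therefore
\[
g^{-1}B_{\SO}g\cap L=B_{\SO}\cap L=\varphi(B_{\SO}\cap L).
\]

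The only delicate point is the root bookkeeping in (ii) and (iii): whether the frame convention uses $g^{-1}Bg$ or $gBg^{-1}$, and whether $w_{0,\mu}w_0$ or its inverse appears. This matters because ${}^{w}B$ for $w=w_{0,\mu}w_0$ versus $w^{-1}$ lands differently. To settle it, we will first check with $w_0=-1$ that both orders give the same Borel here: $w_0$ is central in $W$ for type $B$, so $w_{0,\mu}w_0=w_0w_{0,\mu}$, and the ambiguity disappears. The sign choices in $P_{s_m}$ only affect $\det$ and not the conjugation action on $T_{\SO}$ and on root subgroups, so they are harmless. As a sanity check, we will verify the case $n=1$, where $W_\mu$ is trivial and $g=P_{w_0}$ is the antidiagonal element.
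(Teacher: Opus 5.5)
Your proposal is correct and takes essentially the paper's route. The paper checks only condition (iii): it uses that $g$ normalizes $L$ and that $\varphi$ fixes $B_{\SO}\cap L$, then observes that $P_{w_{0,\mu}}$ sends $B_{\SO}\cap L$ to its opposite and $P_{w_0}$ sends it back, which is exactly your computation $w_0w_{0,\mu}(\Phi_L^+)=\Phi_L^+$. Your additional checks of (i), (ii) and (iv), in particular ${}^{w_0}P=Q$ from $w_0=-1$ in type $B_m$ and the observation that $g$ is an involution, fill in conditions the paper leaves implicit.
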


\begin{proof}
    To show that $(B_{\SO}, T_{\SO}, g)$ is a frame, we must verify that $\varphi(B_{\SO}\cap L) = g^{-1}B_{\SO}g\cap L$. 
    Recall that for the orthogonal group in the odd case, the isogeny $\varphi: L \to L$ is the standard Frobenius, which acts trivially on the Weyl group representatives. 
    Since $g$ normalizes $L$, it suffices to show:
    \begin{equation}\label{Eq:Framecheck}
        B_{\SO}\cap L = g^{-1}(B_{\SO}\cap L)g.
    \end{equation}
    Let $B^-\subseteq \GL_{n+2, \Fpbar}$ be the lower triangular Borel and set $B_{\SO}^-=B^-\cap \SO(n, 2)$. The pair $(B_{\SO}^-\cap L, T_{\SO})$ is the Borel pair of $L$ opposite to $(B_{\SO}\cap L, T_{\SO})$. 
    By the properties of the longest elements $w_0 \in W$ and $w_{0,\mu} \in W_\mu$, we have:
    \[
    g^{-1}(B_{\SO}\cap L)g = P_{w_0} \left( P_{w_{0,\mu}} (B_{\SO}\cap L) P_{w_{0,\mu}} \right) P_{w_0} = P_{w_0} (B_{\SO}^-\cap L) P_{w_0} = B_{\SO}\cap L.
    \]
    The first equality follows because $P_{w_{0,\mu}}$ maps $B_{\SO} \cap L$ to its opposite $B_{\SO}^- \cap L$, and $P_{w_0}$ maps that back to $B_{\SO} \cap L$ within the Levi $L$.
\end{proof}

	\subsection{The case {$n$} is even}
	\begin{lemma}\label{Lem:TwistorEven}
		With notations as in \S~\ref{S:WeylGpSO} (where $n$ is even),
		the triple $(B_{\SO}, T_{\SO}, g := P_{w_{0,\mu}w_0})$ is a frame for the normalized $G$-zip datum $Z_{\mu}^{\data}$ in \S~\ref{S:ZipdataNormEven}. 
	\end{lemma}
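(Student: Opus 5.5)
We verify the four frame axioms for $(B_{\SO},T_{\SO},g)$ with $g=P_{w_{0,\mu}w_0}$, treating the split and nonsplit cases together. The only difference between them is the isogeny $\varphi$: in the split case $\varphi(X)=X^{(p)}$, while in the nonsplit case $\varphi(X)=hX^{(p)}h^{-1}$ with $h$ as in \eqref{Eq:TwistFrobL}.

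Axiom (i), $B_{\SO}\subseteq P$, is immediate from the block description of $P$. For (ii) and (iv), recall that $w_0w_{0,\mu}$ is the longest element of ${}^\mu W$. Conjugating $B_{\SO}$ by a representative of $w_0$ gives $B^-_{\SO}$, and conjugating further by $w_{0,\mu}$ only permutes roots inside $L$. Hence
\[
g^{-1}B_{\SO}\,g=P_{w_0}P_{w_{0,\mu}}B_{\SO}P_{w_{0,\mu}}^{-1}P_{w_0}^{-1}
\]
is contained in $P_{\mu^{-1}}=Q$, since $Q=P_{\mu^{-1}}$ and $L^\varphi=L$ in both cases. Because the $P_w$ are signed permutation matrices, they normalize $T_{\SO}$. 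The twisting matrix $h$ is a permutation matrix fixing the diagonal torus setwise (it swaps $t_m\leftrightarrow t_m^{-1}$), so $\varphi(T_{\SO})=T_{\SO}=g^{-1}T_{\SO}g\subseteq L$. This gives (iv).

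The substantive axiom is (iii). As in Lemma~\ref{Lem:TwistorOdd}, $g$ normalizes $L$ and $g^{-1}(B_{\SO}\cap L)g=B_{\SO}\cap L$. In the split case $\varphi(B_{\SO}\cap L)=B_{\SO}\cap L$, so the argument of Lemma~\ref{Lem:TwistorOdd} carries over verbatim. In the nonsplit case we must also check that $h(B_{\SO}\cap L)h^{-1}=B_{\SO}\cap L$. Conjugation by $h$ acts on $L\cong\mathbb{G}_m\times\SO(n-2,2)$ as the outer (graph) automorphism of type $D_{m-1}$ swapping $s_{m-1}$ and $s_m$; concretely it exchanges the roots $e_{m-1}-e_m$ and $e_{m-1}+e_m$. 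It fixes the set of simple roots of $B_{\SO}\cap L$, namely $\{e_2-e_3,\dots,e_{m-1}-e_m,e_{m-1}+e_m\}$, and hence preserves $B_{\SO}\cap L$. One checks this directly: conjugating an upper triangular element of $\SO(J_{n+2})$ by $h$ preserves upper triangularity up to the $2\times2$ middle block, and there the condition of lying in $\SO$ forces that block to be diagonal, so the result stays in $B_{\SO}$.

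The main obstacle is checking that conjugation by $P_{w_0}$ and by $P_{w_{0,\mu}}$ behaves as claimed in type $D$, where $w_0$ depends on $m\bmod 4$ (compare the formulas in \S~\ref{S:WeylGpSO}). When $w_0$ is not $-1$, it induces the diagram automorphism, so one must confirm that $w_0w_{0,\mu}$ still sends $B_{\SO}\cap L$ to itself. The plan is to argue at the level of root systems. Since $w_{0,\mu}$ sends $\Phi^+_L$ to $-\Phi^+_L$, and $w_0$ sends $-\Phi^+$ to $\Phi^+$ while preserving $\Phi_L$ (as $\mu$ is $w_0$-stable up to inversion), the composite maps $\Phi^+_L$ to $\Phi^+_L$. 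Lifting this statement to the matrix representatives $P_w$ then gives the required equality of subgroups.
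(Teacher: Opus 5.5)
Your proposal is correct and follows the paper's argument. The split case reduces to the odd-dimensional one. In the nonsplit case you reduce axiom (iii) to showing that $h$ normalizes $B_{\SO}\cap L$, and you verify this exactly as the paper does: the middle $2\times 2$ block is forced to be diagonal by $X^tJ_{n+2}X=J_{n+2}$.

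Your explicit checks of (i), (ii) and (iv) fill in details the paper leaves implicit. So does your root-theoretic justification, using $w_0\mu=\mu^{-1}$, that $g^{-1}(B_{\SO}\cap L)g=B_{\SO}\cap L$ in type $D$.
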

	\begin{proof}
		In the split case, i.e.\ when $\SO(\Lambda)_{\Fp}$ is split over $\Fp$, the same argument as in the odd-dimensional case applies. Suppose now that $\SO(\Lambda)_{\Fp}$ is nonsplit over $\Fp$. By the proof of Lemma~\ref{Lem:TwistorOdd}, the equality in \eqref{Eq:Framecheck} still holds. Hence it remains only to show that $\varphi(B_{\SO}\cap L)=B_{\SO}\cap L$, where $\varphi:L\to L$ is given by $X\mapsto hX^{(p)}h^{-1}$ as in \eqref{Eq:TwistFrobL}, with $h$ defined there. It suffices to check this on $\Fpbar$-points. Thus we are reduced to proving that
\(
h(B_{\SO}\cap L)h^{-1}=B_{\SO}\cap L,
\)
that is, that conjugation by $h$ preserves $B_{\SO}\cap L$, and hence induces an automorphism of it. Since $h$ normalizes $L$, it is enough to show that for every $\Fpbar$-point $X$ of $B_{\SO}\cap L$, the matrix $hXh^{-1}$ is upper triangular.

		Since $hX h^{-1}$ is obtained from $X$ by switching the $m$-th and $(m+1)$-st rows and the $m$-th and $(m+1)$-st columns simultaneously, we are reduced to investigating the central $2\times 2$ block in rows and columns $m,m+1$,
		\[
		\begin{pmatrix}
			a_{m,m}   & a_{m,m+1}\\
			0         & a_{m+1,m+1}
		\end{pmatrix}.
		\]
		But this matrix must satisfy $a_{m,m+1} = 0$ and $a_{m,m}\, a_{m+1,m+1} = 1$, as follows from the relation $X^T J_{n+2}X = J_{n+2}$. 
		We can now conclude, because the $h$-conjugation turns $\diag(a_{m,m}, a_{m+1,m+1})$ into $\diag(a_{m+1,m+1}, a_{m,m})$, which is still upper triangular.
	\end{proof}
	
	\begin{remark}\label{Rmk:WeylGpAuto}
		With the above notation, one has:
		\begin{enumerate}
			\item When $n$ is odd, one computes that $w_{0,\mu}w_0 = (1,n+2)$, whereas when $n$ is even, $w_{0,\mu}w_0 = (1,n+2)(m,m+2)$, with $m = n/2 + 1$.
			\item When $n$ is odd or $n$ is even and $\SO(\Lambda)_{\Fp}$ is split, the isogeny $\varphi: L \to L$ is the usual Frobenius and induces the identity map on $W_{\mu}$. When $n$ is even and $\SO(\Lambda)_{\Fp}$ is nonsplit, the twisted Frobenius $\varphi$ on $\SO(n,2)_{\Fpbar}$ and on $L$ induces an automorphism $\psi \colon W \to W$, which restricts to an automorphism $\psi \colon W_{\mu} \to W_{\mu}$. This automorphism interchanges $s_{m-1}$ and $s_m$ and fixes all other simple reflections. Consequently, its restriction to ${}^\mu W$ induces a bijection ${}^\mu W \to {}^\mu W$ that exchanges $w_{m-1}$ and $w^{\dagger}_{m-1}$ and fixes all other elements.
		\end{enumerate}
	\end{remark}

\section{EO stratification for $\Sh_{\SO(n, 2), \Fpbar}$}\label{S:EOOrthog}
\subsection{Partial order on {${}^\mu W$}}\label{S:ParOrd}
	We work in the setting of \S~\ref{S:RedPairNorm}: Consider a normalized geometric reductive pair $(\SO(n,2)_{\Fpbar},\mu)$ there and its associated\footnote{We note again that $(Z_{\mu}^{\data})_{\Fpbar}$ is not entirely determined by  $(\SO(n,2)_{\Fpbar},\mu)$.} normalized geometric $G$-zip datum $(Z_{\mu}^{\data})_{\Fpbar}$, equipped with the chosen frame of \S~\ref{S:Frames}, in one of the three cases (either $n$ is odd, or $n$ is even with $\SO(\Lambda)_{\Fp}$ split or nonsplit). In each case, the isogeny $\varphi \colon L \to L$ in $(Z_{\mu}^{\data})_\Fpbar$ induces an automorphism $\psi \colon W_{\mu} \to W_{\mu}$, described explicitly in Remark~\ref{Rmk:WeylGpAuto}. We consider the tuple
	\[
	(W, W_{\mu}, {}^\mu W, \psi \colon W_{\mu} \to W_{\mu}),
	\]
	 which is called an
	\emph{abstract zip datum of Coxeter type} in \cite[Def.~9.10]{PWZ}.
	\begin{theorem}[{\cite[Def.~6.1, Cor.~6.3]{PWZ}}]\label{Thm:PartOrd}
		Define a relation on ${}^\mu W$ as follows:
		for $w,w' \in {}^\mu W$, 
		\[
		w \preceq w' \iff \exists\, v \in W_{\mu} \text{ such that } v^{-1} w \psi(v) \le w',
		\]
		where $\le$ denotes the Bruhat order on $W$. Then $\preceq$ is a partial order. 
	\end{theorem}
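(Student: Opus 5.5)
The plan is to deduce the three axioms of a partial order from the geometry of the zip stack, rather than from a direct combinatorial manipulation in $W$. Work with a connected algebraic zip datum realising $(W,W_\mu,{}^\mu W,\psi)$ with a frame $(B,T,g)$; here $(Z_{\mu}^{\data})_{\Fpbar}$ with the frame from \S~\ref{S:Frames} will do. By Theorem~\ref{Thm:ZipstackTopWeyl}, the orbits $G^w=E_\mu\cdot(gw)$ for $w\in{}^\mu W$ are pairwise distinct and exhaust $G(\Fpbar)$. Note that Theorem~\ref{Thm:ZipstackTopWeyl} as quoted presupposes the topology defined by $\preceq$, so only its set-theoretic bijection part will be used here.

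\textbf{Step 1: the key geometric input.} The central step is the closure statement
\[
\overline{G^{w'}}=\bigcup_{w\in{}^\mu W,\ w\preceq w'} G^{w}.
\]
To obtain it, first rewrite $G^{w'}$ as $E_\mu\cdot(gBw'B)$. Since $gBw'B$ sits inside the $P\times Q$-type double coset structure, its closure is controlled by the Bruhat order: $\overline{BwB}=\bigcup_{y\le w}ByB$. The $E_\mu$-saturation of $g\,ByB$ is then analysed through the Levi component. Conjugating by $v\in W_\mu$ on the left, and by the Frobenius twist $\psi(v)$ on the right (which is how $\varphi$ enters the $E_\mu$-action), shows that $g\,ByB$ meets exactly the orbits $G^w$ with $v^{-1}w\psi(v)=y$ for some $v$. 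This is essentially the argument of \cite[\S5--6]{PWZ}, using the induction on the Levi (the ``algebraic zip datum'' recursion) to identify $E_\mu\cdot(gByB)$ with a union of orbits.

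\textbf{Step 2: deducing the axioms.} Once the closure formula is in hand, the axioms follow quickly.
\begin{itemize}
\item Reflexivity is immediate by taking $v=1$.
\item Transitivity follows because closure is transitive. If $w\preceq w'$ and $w'\preceq w''$, then $G^w\subseteq\overline{G^{w'}}\subseteq\overline{G^{w''}}$, so $w\preceq w''$ by the formula.
\item For antisymmetry, suppose $w\preceq w'$ and $w'\preceq w$. Then $\overline{G^w}=\overline{G^{w'}}$. Each orbit is open in its closure, being an orbit of a connected group, and two distinct orbits cannot both be dense in the same irreducible closed set. Hence $G^w=G^{w'}$, and so $w=w'$ by injectivity of the frame bijection. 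Alternatively, one can compare dimensions using $\dim G^w=\ell(w)+\dim P$.
\end{itemize}

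\textbf{Main obstacle.} The hard part is Step 1, namely proving that the closure is \emph{exactly} the $\preceq$-down-set; the inclusion ``$\supseteq$'' is the delicate one. If the geometric route gets stuck, I would fall back on a purely combinatorial proof of transitivity, which is the only nontrivial axiom. That would be an induction on $\ell(v)$ using the lifting property of Bruhat order under simple reflections $s\in W_\mu$: if $x\le y$, then $\min(sx\psi(s),x)\le\max(sy\psi(s),y)$ up to the standard case split. The aim is to show that the set $\{v^{-1}w\psi(v)\}$ has a Bruhat-minimal ``Demazure-type'' element whose down-set is stable under the twisted conjugation.
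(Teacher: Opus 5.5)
Your overall route is the one behind the citation. The paper gives no argument of its own and quotes \cite[Cor.~6.3]{PWZ}, which is obtained there as a consequence of the orbit-closure description $\overline{G^{w'}}=\bigcup_{w\preceq w'}G^{w}$. Your Step~2 is a correct version of that deduction: reflexivity via $v=1$, transitivity via closures, and antisymmetry because distinct orbits are disjoint and each is open in its irreducible closure.

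Step~1, however, has to be imported from \cite{PWZ}, because the mechanism you sketch for it is false. It is not true that $gByB$ meets exactly the orbits $G^w$ with $v^{-1}w\psi(v)=y$. To see this, note that $\lvert{}^\mu W\rvert\cdot\lvert W_\mu\rvert=\lvert W\rvert$. When $\psi=\mathrm{id}$ on $W_\mu$ (e.g.\ $n\geq 3$ odd in the present setting), the twisted class of $w=1$ is just $\{1\}$. So as soon as $W_\mu\neq 1$, the twisted classes of the elements of ${}^\mu W$ miss some $y\in W$. The nonempty set $gByB$ would then meet no orbit at all, although the orbits cover $G(\Fpbar)$.

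The actual closure theorem involves Bruhat inequalities rather than equalities, and its proof in \cite{PWZ} is considerably more delicate. Your combinatorial fallback is too vague to replace it. As written, your argument is therefore complete only modulo quoting the closure theorem of \cite{PWZ}, which is the same dependence the paper has.
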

This order was introduced in \cite{WedhornSpecFzip} in the general framework of \cite{PWZ} (cf.\ \cite{HeGstable}). In the special case \((\SO(n,2)_{\Fpbar},\mu)\), it is given explicitly by
\begin{equation}
\begin{aligned}
1 &\preceq w_1 \preceq \cdots \preceq w_n,
&& \text{if } n \text{ is odd},\\
1 &\preceq w_1 \preceq \cdots \preceq w_{m-2} \preceq w_{m-1},\, w_{m-1}^{\dagger}
   \preceq w_m \preceq \cdots \preceq w_n,
&& \text{if } n \text{ is even},
\end{aligned}
\end{equation}
and there is no partial order relation between \(w_{m-1}\) and \(w_{m-1}^{\dagger}\).

	\subsection{Topological space of $\SO(n,2)_{\Fpbar}\text{-}\mathsf{Zip}^{\mu}$}
	With notation as in \S~\ref{S:ParOrd} and following \cite{PWZ2}, for each $w \in {}^\mu W$ we define the \emph{framed zip orbit}
	\begin{equation}\label{Eq:ZmuOrbFram}
		O_{\fram}^{w} := E_{\mu}(gw) \subseteq \SO(n,2)(\Fpbar).
	\end{equation}
	The following theorem specializes Theorem~\ref{Thm:ZipstackTopWeyl} to the case of \(\SO(n,2)_{\Fpbar}\text{-}\mathsf{Zip}^\mu\).
	\begin{theorem}\label{Thm:ZipStackTop}
		The assignment $w \mapsto O_{\fram}^{w}$ induces a homeomorphism
		\[
		\alpha \colon ({}^\mu W,\, \preceq) \xrightarrow{\;\cong\;} 
		E_{\mu}(\Fpbar)\backslash \SO(n,2)(\Fpbar)
		\;\cancong\;
		\bigl|\SO(n,2)_{\Fpbar}\text{-}\mathsf{Zip}^{\mu}\bigr|.
		\]
		Equivalently, $\alpha$ yields a bijection between ${}^\mu W$ and
		$E_{\mu}(\Fpbar)\backslash \SO(n,2)(\Fpbar)$ such that, for $w,w' \in {}^\mu W$,
		one has $O_{\fram}^{w'} \subseteq \overline{O_{\fram}^{w}}$ if and only if $w' \preceq w$. Here $ \overline{O_{\fram}^{w}}$ denotes the closure of  $O_{\fram}^{w}$. 
	\end{theorem}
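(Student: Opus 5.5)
This is a specialization of Theorem~\ref{Thm:ZipstackTopWeyl} (\cite[Thm.~7.5]{PWZ}), so the plan is to check its hypotheses in our normalized setting and then translate. Fix one of the three cases: $n$ odd, or $n$ even with $\SO(\Lambda)_{\Fp}$ split or nonsplit. By Lemma~\ref{Lem:TwistorOdd}, resp.\ Lemma~\ref{Lem:TwistorEven}, the triple $(B_{\SO},T_{\SO},g=P_{w_{0,\mu}w_0})$ is a frame for $(Z_\mu^{\data})_{\Fpbar}$. In the nonsplit even case the frame condition is only verified for the twisted isogeny $\varphi\colon X\mapsto hX^{(p)}h^{-1}$, so the associated $\psi\colon W_\mu\to W_\mu$ is the one of Remark~\ref{Rmk:WeylGpAuto}(2). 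Theorem~\ref{Thm:ZipstackTopWeyl} then gives that $w\mapsto E_\mu\cdot(gw)$ is a bijection ${}^\mu W\to E_\mu(\Fpbar)\backslash \SO(n,2)(\Fpbar)$. Here $gw$ means $gP_w$, and a different lift of $w$ differs by an element of $T_{\SO}$, which lies in $L$. The identification of this orbit set with $|\SO(n,2)_{\Fpbar}\text{-}\mathsf{Zip}^\mu|$ is the canonical one from \S~\ref{S:Zipstack}. This gives the bijection $\alpha$, with image exactly the framed orbits $O^w_{\fram}$ of \eqref{Eq:ZmuOrbFram}.

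Next I would handle the topology. In \cite{PWZ} the closure relation is $O^{w'}\subseteq\overline{O^{w}}$ if and only if $w'\preceq w$, where $\preceq$ is the order of Theorem~\ref{Thm:PartOrd} for the abstract zip datum $(W,W_\mu,{}^\mu W,\psi)$ defined by the frame. The topology on ${}^\mu W$ is the one whose closed sets are the $\preceq$-downward closed subsets. The topology on $|\SO(n,2)_{\Fpbar}\text{-}\mathsf{Zip}^\mu|$ is the quotient topology, in which closed sets are unions of orbits stable under closure. Since there are finitely many orbits, each locally closed, these two descriptions match under $\alpha$. So homeomorphism and the closure criterion are equivalent statements, which justifies the ``Equivalently'' in the theorem.

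The main point of care is to make sure that the $\psi$ used in Theorem~\ref{Thm:PartOrd} is the one induced by the chosen frame. Concretely, $\psi$ is obtained from $\varphi$ transported through $\mathrm{int}(g)$, and the frame conditions (iii)--(iv) ensure that it is compatible with the Bruhat order. In the odd and split cases $\varphi$ is the standard Frobenius, so $\psi$ is the identity. In the nonsplit case $\psi$ swaps $s_{m-1}$ and $s_m$. In both situations $g$ normalizes $T_{\SO}$ and $L$ by Remark~\ref{Rmk:WeylGpAuto}(1).

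Finally, I would record the explicit chain order stated after Theorem~\ref{Thm:PartOrd}. This is a routine check that $\ell$ is strictly monotone along $\preceq$, together with the incomparability of $w_{m-1}$ and $w_{m-1}^\dagger$. It is not needed for the theorem itself.
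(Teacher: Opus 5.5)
Your proposal is correct and takes the paper's route. The paper gives no separate proof: it simply specializes Theorem~\ref{Thm:ZipstackTopWeyl} to the frames $(B_{\SO},T_{\SO},P_{w_{0,\mu}w_0})$ of Lemmas~\ref{Lem:TwistorOdd} and~\ref{Lem:TwistorEven}, with $\preceq$ the order of Theorem~\ref{Thm:PartOrd}.

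One caution on a side remark. You define $\psi=\mathrm{int}(g)\circ\varphi$, and then infer that $\psi=\mathrm{id}$ because $\varphi$ is the standard Frobenius; that inference only holds when $g$ centralizes $W_\mu$. This fails for even $n\geq 4$, where $g=P_{(1,n+2)(m,m+1)}$ interchanges $s_{m-1}$ and $s_m$. So with these frames $\psi$ is actually the swap in the split case, and the identity in the nonsplit case, where $\mathrm{int}(g)$ and $\mathrm{int}(h)$ cancel. This does not affect your argument, which only requires $\psi$ to be the frame-induced map.
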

	
	\medskip
	\noindent\textbf{Criterion for $O_{\fram}^{w} = O_{\fram}^{w'}$.}
	Clearly one can define the framed zip orbit $O_{\fram}^w:=E_{\mu}(gw)$ for all $w \in W$.
	Given $w,w' \in W$, there is a criterion in \cite[\S9]{PWZ} for when the framed zip orbits
	$O_{\fram}^w$ and $O_{\fram}^{w'}$ coincide; for our purposes we recall it in our setting.
	\begin{theorem}[\cite{PWZ}, Thm.~9.17 and Cor.~9.18]\label{Thm:Criteria}
		The equality $O_{\fram}^w = O_{\fram}^{w'}$ holds if and only if there exist $v \in W_{\mu}$ and
		$v' \in E_w$ such that $w' = v^{-1} w v' \psi(v)$, where
		$E_w \subseteq wW_{\mu} w^{-1}$ is the largest subgroup of $wW_{\mu} w^{-1}$ satisfying
		$\psi(w E_w w^{-1}) = E_w$.
		In particular, if $s \in W_{\mu}$ is a reflection (i.e.\ $s^2 = 1$) that commutes with $w$ and is
		fixed by $\psi$, then $\langle s \rangle \subseteq E_w$, and hence
		$O_{\fram}^{w} = O_{\fram}^{v^{-1} w s \psi(v)}$ for all $v \in W_{\mu}$.
	\end{theorem}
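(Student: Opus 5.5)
The plan is to take the general equivalence directly from \cite[Thm.~9.17, Cor.~9.18]{PWZ}, after checking that our data satisfy its hypotheses, and then to derive the ``in particular'' statement by a short group-theoretic argument.

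\textbf{The main equivalence.} By the framed setup of \S\ref{S:Frames}, $(W,W_\mu,{}^\mu W,\psi)$ is an abstract zip datum of Coxeter type in the sense of \cite[Def.~9.10]{PWZ}. Here $\psi$ is the automorphism of $W_\mu$ induced by $\varphi\colon L\to L$, as in Remark~\ref{Rmk:WeylGpAuto}. With the frame $(B_{\SO},T_{\SO},g)$ fixed, the orbits $E_\mu(gw)$ for $w\in W$ are exactly the orbits treated in \emph{loc.\ cit.}, so the equivalence can be quoted. For the reader's orientation we would recall the idea of the proof there.
\begin{itemize}
\item One direction: the element $gw$ can be moved within its $E_\mu$-orbit by pairs $(\dot v,\varphi(\dot v))$ with $\dot v$ a lift in $N_L(T)$ of $v\in W_\mu$. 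This uses the frame condition $\varphi(T)=g^{-1}Tg$.
\item The same direction also uses the elements of $E_w$. These are the part of $wW_\mu w^{-1}$ that is stable under the twisted conjugation $x\mapsto\psi(wxw^{-1})$, so they can be absorbed into the stabilizer.
\item Conversely, by the algebraic zip datum analysis (induction on the Levi via \cite[\S\S4--5]{PWZ}), every coincidence of orbits arises this way.
\end{itemize}
We do not expect to reprove this; the only point to verify is that our normalized data and frames fit the hypotheses, and this is Lemmas~\ref{Lem:TwistorOdd} and~\ref{Lem:TwistorEven}.

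\textbf{The ``in particular'' part.} This is what we would actually prove. Let $s\in W_\mu$ with $s^2=1$, $sw=ws$ and $\psi(s)=s$. We check the following in order.
\begin{enumerate}
\item Since $w$ and $s$ commute, $s=wsw^{-1}$, so $\langle s\rangle\subseteq wW_\mu w^{-1}$.
\item Since $wsw^{-1}=s\in W_\mu$ and $\psi$ fixes $s$, we get $\psi(w\langle s\rangle w^{-1})=\psi(\langle s\rangle)=\langle s\rangle$. So $\langle s\rangle$ is one of the subgroups $H\subseteq wW_\mu w^{-1}$ satisfying $\psi(wHw^{-1})=H$.
\item To pass from this to $\langle s\rangle\subseteq E_w$, we use that the largest such subgroup exists and contains all of them. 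The subgroup generated by all such $H$ again satisfies $\psi(w\,\cdot\,w^{-1})=\cdot$, because $\psi$ and conjugation by $w$ are homomorphisms. This is the fact implicit in the definition of $E_w$ in \cite{PWZ}.
\end{enumerate}
Hence $s\in E_w$. Applying the criterion with $v'=s$ and arbitrary $v\in W_\mu$ gives $O_{\fram}^w=O_{\fram}^{v^{-1}ws\psi(v)}$.

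\textbf{Main obstacle.} The only delicate point is the main equivalence itself, which rests on the full machinery of \cite{PWZ}. We would not reprove it, and would only make sure that the $\psi$ of Remark~\ref{Rmk:WeylGpAuto} is the automorphism used there. In the nonsplit even case this means checking that the frame-induced map is conjugation by $h$, which swaps $s_{m-1}$ and $s_m$. We would also note that a convention mismatch ($\psi$ versus $\psi^{-1}$) would be harmless, since in all our cases $\psi$ is an involution.
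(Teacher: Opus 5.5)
Your overall route is the same as the paper's. The paper gives no argument beyond citing \cite[Thm.~9.17, Cor.~9.18]{PWZ}. Your derivation of the ``in particular'' clause is correct: $\langle s\rangle$ is admissible, admissible subgroups are closed under generating a subgroup, so $s\in E_w$, and then you take $v'=s$.

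However, the check you single out as the main obstacle does not come out the way you predict, and the discrepancy is not a $\psi$ versus $\psi^{-1}$ issue. For the orbits $E_\mu(g\dot w)$, the twist necessarily involves the frame element.

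Take $\dot u\in N_L(T)$. Then $(\dot u,\varphi(\dot u))\in E_\mu$, and $\dot u\,g\dot w\,\varphi(\dot u)^{-1}=g\,(g^{-1}\dot u g)\,\dot w\,\varphi(\dot u)^{-1}$. Writing $v$ for the class of $g^{-1}\dot u^{-1}g$, this element lies over $v^{-1}w\,\bar\varphi(gvg^{-1})$. So the relevant twist is $\psi=\bar\varphi\circ\mathrm{int}(g)$, not the map $\bar\varphi$ induced by $\varphi$ alone.
\begin{itemize}
\item For $n$ odd, $g=P_{(1,n+2)}$ centralizes $W_\mu$, so nothing changes.
\item For $n$ even, $g=P_{(1,n+2)(m,m+1)}$ acts on $W_\mu$ by the swap $s_{m-1}\leftrightarrow s_m$. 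Hence $\psi$ is the swap in the split case.
\item In the nonsplit case $\psi=\mathrm{int}(hg)$, and $hg$ has Weyl class $(1,n+2)$, so $\psi$ is the identity.
\end{itemize}
This is the opposite of Remark~\ref{Rmk:WeylGpAuto}, so the verification you plan (that the frame-induced map is conjugation by $h$) would fail.

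This is not harmless. Take $n=4$ in the split case. The pair $(P_{s_3},P_{s_3})\in E_\mu$ carries $gP_{s_1s_3}$ to $P_{s_3}gP_{s_1}=gP_{s_2}P_{s_1}=gP_{s_2s_1}$. On the other hand, the criterion with $\psi=\mathrm{id}$, $v=s_2$, $v'=1$ would give $O_{\fram}^{s_1s_2}=O_{\fram}^{s_2s_1}$. Together these merge the two $2$-dimensional strata $O_{\fram}^{s_1s_2}$ and $O_{\fram}^{s_1s_3}$, contradicting Theorem~\ref{Thm:ZipStackTop}.

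So you must state the criterion with $\psi$ induced by $\mathrm{int}(g)\circ\varphi$ on $W_\mu$. With that $\psi$, your argument for the special case goes through verbatim. The paper's later uses of the criterion are all in type $B_m$ and only need $\psi(s_m)=s_m$, which holds for either choice, so they are unaffected.
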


	\subsection{EO stratifications}
We now return to our geometric mod $p$ Shimura varieties $\Sh_{\SO(\Lambda), \Fpbar}$ and $\Sh_{\GSpin(\Lambda), \Fpbar}$ from \S~\ref{S:ShvGSpinSO}. Associated with them is the reductive triple $(\SO(\Lambda)_{\Fp}, \SO(\Lambda)_{\kappa}, [\mu]_{\kappa})$ over $\kappa$, where $\kappa = \Fp$ except in the case $n=0$, where $\kappa$ may be $\Fpsq$. By means of the normalization isomorphism $\SO(n, 2)_{\Fpbar} \cong \SO(\Lambda)_{\Fpbar}$ fixed in \S~\ref{S:NormPair}, we obtain the geometric normalized $G$-zip datum $(Z_{\mu}^{\data})_{\Fpbar}$. Recall that frames for this datum were chosen in \S~\ref{S:Frames}. With the normalization isomorphism, we abuse notation and write $\Sh_{\SO(n, 2), \Fpbar}$ for $\Sh_{\SO(\Lambda), \Fpbar}$. Similarly, we also write $\Sh_{\GSpin(n, 2), \Fpbar}$ for $\Sh_{\GSpin(\Lambda), \Fpbar}$. Thanks to the work \cite{SZ22}, we have the following:
	
	\begin{theorem}\label{Thm:EOAbel}
		With notation as above, the following hold.
		\begin{enumerate}
			\item There is a smooth map of algebraic stacks over $\Fpbar$ (the zip period map)
			\[
			\zeta \colon \Sh_{\SO(n, 2),\Fpbar}
			\longrightarrow \SO(n,2)_{\Fpbar}\text{-}\mathsf{Zip}^{\mu}
			\;\cancong\; [E_{\mu}\backslash \SO(n,2)_{\Fpbar}].
			\]
			\item Let $\alpha \colon {}^\mu W \xrightarrow{\sim} E_{\mu}(\Fpbar)\backslash \SO(n,2)(\Fpbar)$ be as in Theorem~\ref{Thm:ZipStackTop}. For each $w \in {}^\mu W$, let $\Sh_{\SO(n,2),\Fpbar}^{w}$ be the preimage of the single-point substack
			\(
			[E_{\mu}\backslash E_{\mu}\alpha(w)] \subseteq [E_{\mu}\backslash \SO(n,2)_{\Fpbar}].
			\)
			Then $\Sh_{\SO(n,2),\Fpbar}^{w}$ is a smooth locally closed subscheme of $\Sh_{\SO(n,2),\Fpbar}$, of dimension $\ell(w)$. Moreover, for $w,w' \in {}^\mu W$,
			$\Sh_{\SO(n,2),\Fpbar}^{w'}$ is contained in the closure of $\Sh_{\SO(n,2),\Fpbar}^{w}$
			if and only if $w' \preceq w$.
		\end{enumerate}
	\end{theorem}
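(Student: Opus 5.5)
The proof is a transfer argument. Everything is first established on the Hodge type side $\Sh_{\GSpin(n,2),\Fpbar}$ and then descended along $\pi\colon \Sh_{\GSpin(n,2),\Fpbar}\to \Sh_{\SO(n,2),\Fpbar}$. The general abelian type theory of \cite{SZ22} does this descent. Our task is to check that its output is compatible with the normalized zip datum and the frames chosen in \S\ref{S:Frames}, so that the parametrization is exactly the map $\alpha$ of Theorem~\ref{Thm:ZipStackTop}.

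\textbf{Step 1 (zip period map).} For $(\GSpin(\Lambda),\mu)$, which is of Hodge type via the Kuga--Satake embedding \eqref{Eq:SeigelEmbedd}, \cite{EOZhang} gives a smooth morphism $\zeta_{\GSpin}\colon \Sh_{\GSpin(\Lambda),\kappa}\to \GSpin(\Lambda)_\kappa\text{-}\mathsf{Zip}^{\mu}$.

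\begin{itemize}
\item \cite{SZ22} shows that $\zeta_{\GSpin}$ is equivariant for the action used in Deligne's construction of $\Sh_{\SO(\Lambda)}$ from connected components of $\Sh_{\GSpin(\Lambda)}$.
\item Hence $\zeta_{\GSpin}$ descends to a morphism $\zeta\colon \Sh_{\SO(\Lambda),\kappa}\to \SO(\Lambda)_\kappa\text{-}\mathsf{Zip}^{\mu_{\SO}}$.
\item This $\zeta$ is compatible with $[\pi]$ by functoriality (\S\ref{S:ZipPerMap}).
\item $\zeta$ is smooth, because $\pi$ is finite étale onto its image components on special fibers, and smoothness is étale local on the source.
\item Base change to $\Fpbar$ and transport along the normalization isomorphism $\epsilon_{\mathfrak B}$ of \S\ref{S:NormPair}. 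This gives (1), with the target $[E_\mu\backslash \SO(n,2)_{\Fpbar}]$.
\end{itemize}

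In the nonsplit even case the transported Frobenius is the twisted one \eqref{Eq:TwistFrobL}. This is exactly why the frame of Lemma~\ref{Lem:TwistorEven} is used. Since $\pi$ induces an isomorphism on adjoint groups, Lemma~\ref{Lem:ZipIdentity} shows that $[\pi]$ is a homeomorphism on underlying spaces. Consequently the EO strata on the $\GSpin$ side are precisely the pullbacks of those on the $\SO$ side.

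\textbf{Step 2 (dimension).} For $w\in{}^\mu W$, the orbit $O^w_{\fram}=E_\mu\cdot(gw)$ has dimension $\dim P+\ell(w)$, by \cite[Thm.~8.1]{PWZ} together with the frame of \S\ref{S:Frames}. Its codimension in $\SO(n,2)$ is therefore $\dim U_- - \ell(w)=n-\ell(w)$.

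Because $\zeta$ is smooth, the preimage $\Sh^w$ of the point $[E_\mu\backslash O^w_{\fram}]$ is a smooth locally closed subscheme. If nonempty, it has codimension $n-\ell(w)$ in the $n$-dimensional variety $\Sh_{\SO(n,2),\Fpbar}$, hence dimension $\ell(w)$.

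Nonemptiness is then needed for every $w$. I would deduce it from the surjectivity of $\zeta$, proved for Hodge type in \cite{EOZhang} (alternatively via the existence of the superspecial or minimal stratum together with smoothness and openness), and transferred to $\SO$ via Step 1.

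\textbf{Step 3 (closure relations).} A smooth morphism is open, so the closure of a preimage is the preimage of the closure: $\overline{\Sh^w}=\zeta^{-1}\bigl(\overline{[E_\mu\backslash O^w_{\fram}]}\bigr)$. Combined with nonemptiness of all strata, this shows that $\Sh^{w'}\subseteq\overline{\Sh^w}$ if and only if $O^{w'}_{\fram}\subseteq \overline{O^w_{\fram}}$. By Theorem~\ref{Thm:ZipStackTop}, the latter holds if and only if $w'\preceq w$.

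\textbf{Main obstacle.} I expect the hardest point to be the bookkeeping in Step 1. The descent in \cite{SZ22} is phrased for an abstract zip datum over $\kappa$, whereas the indexing here uses the normalized datum over $\Fpbar$ with explicit frames. One must check that transporting along $\epsilon_{\mathfrak B}$ carries $E_\mu$-orbits to $E_\mu$-orbits, and that the induced $\psi$ agrees with Remark~\ref{Rmk:WeylGpAuto}; both matter especially in the nonsplit even case. Once this identification is made, parts (1) and (2) of the theorem follow formally from smoothness and Theorem~\ref{Thm:ZipStackTop}.
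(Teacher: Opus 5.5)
Your proposal is correct and follows the paper's route. The paper gives no independent argument; it simply invokes \cite{SZ22}, whose content in this case is exactly your descent of the smooth Hodge type zip period map of \cite{EOZhang} from $\Sh_{\GSpin(n,2),\Fpbar}$ to $\Sh_{\SO(n,2),\Fpbar}$, combined with the orbit dimension formula and openness of the smooth map $\zeta$. Two points need tightening. First, \cite{EOZhang} only proves the dimension formula for \emph{nonempty} strata, so the nonemptiness you use in Steps 2 and 3 should come from \cite{SZ22}, or from your reduction to the minimal stratum via openness of $\zeta$. Second, descending smoothness along $\pi$ in Step 1 also needs $[\pi]$ to be smooth, because what you actually know to be smooth is $\zeta\circ\pi=[\pi]\circ\zeta_{\GSpin}$; it is, since $\GSpin\to\SO$ has central torus kernel.
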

	
	The resulting decomposition,
	\(
	\Sh_{\SO(n,2),\Fpbar} = \bigsqcup_{w \in {}^\mu W} \Sh_{\SO(n,2),\Fpbar}^{w},
	\)
	is called the \emph{EO stratification} of $\Sh_{\SO(n,2),\Fpbar}$. By pullback along the canonical projection map \(\Sh_{\GSpin(n, 2), \Fpbar}\to \Sh_{\SO(n,2),\Fpbar}\), we obtain the EO stratification of \(\Sh_{\GSpin(n, 2), \Fpbar}\), which is of Hodge type and was studied in \cite{EOZhang}.

	\section{The proof of Theorem \ref{Thm:MainThmOrthog}}\label{EOunderKudlaEmbed}
	\subsection{Strategy}
    Recall that Theorem \ref{Thm:MainThmOrthog} determines the image EO stratum of a given EO stratum of $\Sh_{\GSpin(n-1,2),\overline{\mathbb{F}}_{p}}$ under the natural embedding into $\Sh_{\GSpin(n,2),\overline{\mathbb{F}}_{p}}$. To prove this, our approach is to use the functoriality of zip period maps (\S~\ref{S:ZipPerMap}). Specifically, the map of reductive pairs
	\(
	\iota \colon (\SO(\Lambda')_{\Fpbar},\mu') \longrightarrow (\SO(\Lambda)_{\Fpbar},\mu)
	\)
	induces a morphism of zip stacks
	\begin{equation}\label{Eq:ZipMapSO}
		\iota_{\zip} \colon \SO(\Lambda')_{\Fpbar}\text{-}\mathsf{Zip}^{\mu'} 
		\longrightarrow \SO(\Lambda)_{\Fpbar}\text{-}\mathsf{Zip}^{\mu}.
	\end{equation}
	Assume we have chosen normalizations
	$\SO(\Lambda')_{\Fpbar} \cong \SO(n-1,2)_{\Fpbar}$ and
	$\SO(\Lambda)_{\Fpbar} \cong \SO(n,2)_{\Fpbar}$ as in
	\S~\ref{S:ZipdataNormOdd} and \S~\ref{S:ZipdataNormEven}. The embedding $\iota$ then transports to a normalized embedding
	\[
	\iota \colon (\SO(n-1,2)_{\Fpbar},\mu') \longrightarrow (\SO(n,2)_{\Fpbar},\mu).
	\]
	
	Choosing frames for the associated $G$-zip data as in \S~\ref{S:Frames}, the underlying
	topological spaces on the two sides of \eqref{Eq:ZipMapSO} are identified with the partially
	ordered sets ${}^{\mu'}W$ and ${}^{\mu}W$ (Theorem~\ref{Thm:ZipStackTop}), which admit explicit
	descriptions in terms of permutations. Thus our problem is equivalent to the following:
	for each Weyl group element $v \in {}^{\mu'}W'$, corresponding to a framed $E_{\mu'}$-orbit
	$O_{\fram}^{v} \subseteq \SO(n-1,2)(\Fpbar)$, determine the element $w \in {}^{\mu}W$ such that
	the corresponding framed $E_{\mu}$-orbit $O_{\fram}^{w} \subseteq \SO(n,2)(\Fpbar)$ satisfies
	\(
	\iota\bigl(O_{\fram}^{v}\bigr) \subseteq O_{\fram}^{w}.
	\)
	Here and below, a prime indicates passage from $\SO(\Lambda)$ to $\SO(\Lambda')$;
	in particular, $W'$ denotes the Weyl group of $\SO(\Lambda')$.
	
	\subsection{When $n$ is even}
	In this case $m'=m-1=n/2$.
	We may assume that, for an $\Fp$-basis $\mathfrak{B} = \{\delta_1,\ldots,\delta_{2m}\}$ of $\Lambda_{\Fp}$, with $\mathfrak{B}' = \mathfrak{B} \setminus \{\delta_{m+1}\}$, the pair of quadratic forms $(q',q)$ is given by
	\[
	\begin{aligned}
		q' &= x_1x_{n+2}+\cdots+x_{m-1}x_{m+2}+c\,x_m^2,\quad 
		q  = q' + d\,x_{m+1}^2, 
	\end{aligned}
	\]
	where $c,d \in \mathbb{F}_p^{\times}$. Further, we may assume that $(c,d)\in\{(1,1),(u,u),(1,u),(u,1)\}$, where $u\in\mathbb{F}_p^{\times}$ is a fixed nonsquare, since up to isometry there are only four possibilities for the pair of quadratic forms $(q',q)$.  
	
	After base change to $\Fpsq$, the form $q$ becomes split: choose $r,s \in \mathbb{F}_{p^2}^\times$ with
	$r^2=c$ and $s^2=d$, and set
	\[
	\delta_m' := \tfrac{1}{2rs}\bigl(s\delta_m + r\delta_{m+1}\bigr),
	\qquad
	\delta_{m+1}' := \tfrac{1}{2rs}\bigl(-s\delta_m + r\delta_{m+1}\bigr).
	\]
	Then the quadratic form $q$ on $\Lambda_{\Fpsq}$ becomes the standard split form in \eqref{Eq:StdFormEven},
	with respect to the basis
	\[
	\mathfrak{B}^{\new}
	:= \{\delta_1,\ldots,\delta_{m-1},\delta_m',\delta_{m+1}',\delta_{m+2},\ldots,\delta_{2m}\}
	\]
	of $\Lambda_{\Fpsq}$. Let 
	\[
	(\SO(\Lambda')_{\Fpbar},\mu')
	\cong (\SO(n-1,2)_{\Fpbar},\mu'),\quad (\SO(\Lambda)_{\Fpbar},\mu)
	\cong (\SO(n,2)_{\Fpbar},\mu),\]
    be normalization isomorphisms with respect to $\mathfrak{B}'$ and $\mathfrak{B}$ respectively, where $\mu'$ and $\mu$ on the right hand sides are obtained by transport of structures and are both in the standard form
	of \eqref{Eq:MuNorm}. Via transport of structure, the embedding
	\(
	\iota \colon (\SO(\Lambda')_{\Fpbar},\mu') \to (\SO(\Lambda)_{\Fpbar},\mu)
	\)
	induces an embedding
	\[
	\iota\colon (\SO(n-1,2)_{\Fpbar},\mu')
	\longrightarrow
	(\SO(n,2)_{\Fpbar},\mu).
	\]
	To explicitly illustrate this embedding for the reflection elements discussed below, an element in $\SO(n-1,2)_{\Fpbar}$ acting as $-1$ on $\delta_m$ maps under $\iota$ to an element swapping the split basis vectors $\delta_m'$ and $\delta_{m+1}'$. In block matrix form, this is given by:
\begin{equation}\label{Eq:MatrEmbOdd2Even}
	\begin{pmatrix}
		A & 0 & B \\[2pt]
		0 & -1 & 0 \\[2pt]
		C & 0 & D
	\end{pmatrix}
	\longmapsto
	\begin{pmatrix}
		A & 0 & 0 & B \\[2pt]
		0 & 0 & 1 & 0 \\[2pt]
		0 & 1 & 0 & 0 \\[2pt]
		C & 0 & 0 & D
	\end{pmatrix},\qquad A,B,C,D\in \mathrm{M}_{m'}.
\end{equation}

	Let $(B_{\SO},T_{\SO},g)$ be the frame for the normalized $G$-zip datum on $(\SO(n,2)_{\Fpbar},\mu)$ determined by
	the $G$-zip attached to $(\SO(\Lambda)_{\Fp},\mu)$ as in \S~\ref{S:ZipdataNormEven}, and
	let $(B'_{\SO},T'_{\SO},g')$ be the corresponding frame for the normalized $G$-zip data on $(\SO(n-1,2)_{\Fpbar},\mu')$ determined
	by the $G$-zip attached to $(\SO(\Lambda')_{\Fp},\mu')$ as in \S~\ref{S:ZipdataNormOdd}. Then,
	\[
	\begin{aligned}
		g' &= P_{(1,n+2)}
		= \antidiag\bigl(1,\diag(1,\ldots,1,-1,1,\ldots,1),1\bigr)
		\in \SO(n-1,2)(\Fpbar),\\
		g  &= P_{(1,n+2)(m,m+1)}
		= \antidiag\bigl(1,\diag(1,\ldots,1,\antidiag(1,1),1,\ldots,1),1\bigr)
		\in \SO(n,2)(\Fpbar).
	\end{aligned}
	\]
	
	\textbf{Claim:} We have $\iota(g') = g$. Indeed, $g'$ corresponds to the automorphism of
	$\Lambda'_{\Fpbar}$ which switches $\delta_1$ with $\delta_{2m}$ and sends $\delta_m$ to $-\delta_m$,
	fixing the other $\delta_i$. But $g$ corresponds to the automorphism of $\Lambda_{\Fpbar}$ which
	switches $\delta_1$ with $\delta_{2m}$ and $\delta_m'$ with $\delta_{m+1}'$, again fixing the other
	$\delta_i$; this coincides with $\iota(g')$.
	
	For each $i \leq m'-1$ we have $\iota(P_{s'_i}) = P_{s_i}$. For $i = m'$ we have $\iota(P_{s'_{m'}}) = P_{(m-1,m+2)(m,m+1)} = P_{s_{m'}s_m}$. Indeed, $P_{s'_{m'}}$ corresponds to the automorphism of $\Lambda_{\Fpbar}$ which sends $\delta_m$ to $-\delta_m$ and swaps $\delta_{m-1}$ with $\delta_{m+2}$ (this coincides with our choice of matrix representative in \S~\ref{S:WeylOdd} as $\mathfrak{B}'$ is obtained from $\mathfrak{B}$ by deleting $\delta_{m+1}$ without relabeling).
	Using these expressions for $\iota(s'_i)$ and the relation $\iota(g') = g$, we obtain
	\[
	\iota(O_{\fram}^{w'_i}) \subseteq O_{\fram}^{w_i} \quad \text{if } i < m',
	\qquad
	\iota(O_{\fram}^{w'_i}) \subseteq O_{\fram}^{w_{i+1}} \quad \text{if } i \geq  m'.
	\]
	This concludes the proof of Theorem~\ref{Thm:MainThmOrthog} in the case where $n$ is even.
	
	\subsection{When $n$ is odd}\label{pf th 1 even-1}
	In this case $m'=m=\frac{n+1}{2}$. 
	There are $n+1$ strata on both sides.
	\subsubsection{The case $\SO(\Lambda')_{\Fp}$ is split over $\Fp$}
	We may assume that, for an $\Fp$-basis $\mathfrak{B} = \{\delta_1,\ldots,\delta_{2m}\}$ of $\Lambda_{\Fp}$, with $\mathfrak{B}' = \mathfrak{B} \setminus \{\delta_{m+1}\}$, the pair of quadratic forms $(q',q)$ is given by
	\[
	\begin{aligned}
		q' &= x_1x_{n+2}+\cdots+ x_m x_{m+2},\quad 
		q  = q'  + c\,x_{m+1}^2,
	\end{aligned}
	\]
	where $c \in \mathbb{F}_p^{\times}$. In this situation, both $\SO(\Lambda')_{\Fp}$ and $\SO(\Lambda)_{\Fp}$ are \emph{split} over $\Fp$. The proof is similar to the even case, so we record only the necessary modifications.
	
	We use $\mathfrak{B}'$ and $\mathfrak{B}$ for the normalization isomorphisms of 
	$\SO(\Lambda')_{\Fpbar}$ and $\SO(\Lambda)_{\Fpbar}$ respectively (thus no change of basis is needed). The normalized embedding
	\(
	\iota:\SO(n-1,2)_{\Fpbar} \to  \SO(n,2)_{\Fpbar} \) 
	is given by
	\begin{equation}\label{Eq:MatrEmbEven2Odd}
		\begin{pmatrix}A&B\\ C&D\end{pmatrix}\longmapsto
		\begin{pmatrix}
			A&0&B\\[2pt]
			0&1&0\\[2pt]
			C&0&D
		\end{pmatrix},\qquad A,B,C,D\in \mathrm{M}_{m}.
	\end{equation}
	It follows immediately that $\iota(P_{s'_i}) = P_{s_i}$ for each $i \leq m-1$, and
	$\iota(P_{s'_m}) = P_{s_m s_{m-1} s_m}$.

	Let $(B'_{\SO},T'_{\SO},g')$ be the frame for the associated normalized geometric $G$-zip datum $(Z_{\mu}^{\data'})_{\Fpbar}$ on $\SO(n-1, 2)_{\Fpbar}$ as in Lemma \ref{Lem:TwistorEven} (split case). Then we have 
	$\iota(g') = P_{(1,n+2)(m,m+2)}\in \SO(n, 2)(\Fpbar)$; indeed, the basis $\mathfrak{B}'$ for $\Lambda'$ is obtained from
	$\mathfrak{B}$ by deleting $\delta_{m+1}$. On the other hand, let $(B_{\SO},T_{\SO},g)$ be the frame for the associated normalized geometric $G$-zip datum on $\SO(n, 2)_{\Fpbar}$ as in Lemma \ref{Lem:TwistorOdd}. Then we have $g = P_{(1,n+2)} =\iota(g')P_{s_m}$. Using this equality and the expressions for $\iota(P_{s'_i})$, we obtain,
	\begin{align*}
		\iota(O_{\fram}^{w'_i})
		&\subseteq 
		O_{\fram}^{s_m w_i}\mathrel{\overset{\text{claim}}{=}} \begin{cases}
			O_{\fram}^{w_m}& \text{ if } i= m-1,\\
			O_{\fram}^{w_i} & \text{ if } i< m-1,
		\end{cases}
		\qquad
		\iota(O_{\fram}^{w'^{\dagger}_{m-1}})
		\subseteq O_{\fram}^{s_m w_{m-2} s_m s_{m-1} s_m}
		= O_{\fram}^{w_m},\\
		\iota(O_{\fram}^{w'_{m+j}})
		&\subseteq O_{\fram}^{w_{m-2}s_m (s_{m-1}s_ms_{m-1})s_m s_{m-2}\ldots s_{m-(j+1)}}
		= O_{\fram}^{w_{m+j+1}}, \qquad j\geq 0,
	\end{align*}
	where we use the commutativity of $P_{w_{m-2}}$ with $P_{s_m}$ and $s_{m-1}s_ms_{m-1}$ and $s_m$, and where the first equality (the “claim”) is given below.
    
    \noindent \textbf{Proof of the claim.} Recall from Remark \ref{Rmk:WeylGpAuto} that the induced map $\psi: W_{\mu} \to W_{\mu}$ in this case is the identity map. Applying the criterion in Theorem~\ref{Thm:Criteria} with $v = v' = s_m$, we obtain
$O_{\fram}^{s_m w_i} = O_{\fram}^{w_i s_m} = O_{\fram}^{w_i}$ for all $i < m-1$, since $s_m$ clearly commutes with these $w_i$. Applying the same criterion for $i=m$ with $(v,v') = (s_m,1)$, we obtain that $O_{\fram}^{s_m w_{m-1}}=O_{\fram}^{w_{m}}$. This completes the proof of Theorem~\ref{Thm:MainThmOrthog} in the case where $n$ is odd and $\SO(\Lambda')_{\Fp}$ is split.
	
	\subsubsection{The case $\SO(\Lambda')_{\Fp}$ is nonsplit over $\Fp$}\label{PfMaiThmEven-2}
	We may assume that, for an $\Fp$-basis $\mathfrak{B}$ as in the split case, with $\mathfrak{B}'$ obtained from $\mathfrak{B}$ by deleting $\delta_{m+1}$, the pair of quadratic forms $(q',q)$ is given by
	\begin{equation*}
		\begin{aligned}
			\text{Case (I)}\quad
			&q' = x_1x_{n+2}+\cdots+x_{m-1}x_{m+3}+x_m^2 - u x_{m+2}^2,\quad 
			q = q' - x_{m+1}^2;\\[4pt]
			\text{Case (II)}\quad
			&q' = x_1x_{n+2}+\cdots+x_{m-1}x_{m+3}+x_m^2 - u x_{m+2}^2,\quad 
			q = q' + u\,x_{m+1}^2,
		\end{aligned}
	\end{equation*}
	where $u \in \mathbb{F}_p^{\times} \setminus (\mathbb{F}_p^{\times})^2$.
	Under the assumption that $\SO(\Lambda')_{\Fp}$ is nonsplit, these two cases exhaust, up to isometry, all possibilities for the pair $(\Lambda'_{\Fp},\Lambda_{\Fp})$ by discriminant considerations.
	In what follows we treat Case~(I) in detail; Case~(II) is analogous.
	
	After base change to $\Fpsq$, we can arrange that the forms $q'$ and $q$ become split.
	Indeed, choose $r \in \mathbb{F}_{p^2}^\times$ with $r^2 = u$ and set
	\[
	\delta_m^{\new} := \tfrac12\bigl(\delta_m + r^{-1}\delta_{m+2}\bigr),\qquad
	\delta_{m+2}^{\new} := \tfrac12\bigl(\delta_m - r^{-1}\delta_{m+2}\bigr).
	\]
	With respect to the new bases $\mathfrak{B}^{\new}$ and $\mathfrak{B}'^{\new}$, obtained from $\mathfrak{B}$ and $\mathfrak{B}'$ by replacing $\delta_m,\delta_{m+2}$ with $\delta_m^{\new},\delta_{m+2}^{\new}$, the forms $q$ on $\Lambda_{\Fpsq}$ and $q'$ on $\Lambda'_{\Fpsq}$ become the standard split forms of \eqref{Eq:StdFormEven} and \eqref{Eq:StdFormOdd}, respectively.
	We use $\mathfrak{B}'^{\new}$ and $\mathfrak{B}^{\new}$ for the normalization isomorphisms of $\SO(\Lambda')_{\Fpbar}$ and $\SO(\Lambda)_{\Fpbar}$, respectively.
	
	Let $(B'_{\SO}, T'_{\SO}, g')$ be the frame for the attached normalized $G$-zip datum $Z_{\mu'}^{\data}$ on $\SO(n-1,2)_{\Fpbar}$ as in Lemma~\ref{Lem:TwistorEven}.
	Then we again have $\iota(g') = P_{(1,n+2)(m,m+2)} \in \SO(n,2)(\Fpbar)$, as in the split case.
	On the other hand, as in \S~\ref{S:ZipdataNormEven} (nonsplit case), the transported Frobenius
	$\varphi: \SO(n,2)_{\Fpbar} \to \SO(n,2)_{\Fpbar}$ is the twisted one
	\(
	A \mapsto P_{s_m} A^{(p)} P_{s_m}.
	\)
	
	Note that the triple $(B_{\SO}, T_{\SO}, g = P_{(1,n+2)(m,m+2)})$ is a frame for the normalized $G$-zip datum $Z_{\mu}^{\data}$ on $\SO(n,2)_{\Fpbar}$.
	Indeed, it suffices to check that $\varphi(L \cap B_{\SO}) = g^{-1}(L \cap B_{\SO})g$, which follows from the same argument as in the proof of Lemma~\ref{Lem:TwistorOdd}, together with the equality $P_{(1,n+2)(m,m+2)} = P_{w_0 w_{0,\mu} s_m}$.
	
	Since we use the new bases for the normalizations of $\SO(\Lambda')_{\Fpbar}$ and $\SO(\Lambda)_{\Fpbar}$, the normalized embedding $\iota$ from the former to the latter is again given by \eqref{Eq:MatrEmbEven2Odd}.
	Consequently we still have $\iota(P_{s'_i}) = P_{s_i}$ for each $i \leq m-1$, and
	$\iota(P_{s'_m}) = P_{s_m s_{m-1} s_m}$, as in the split case.
	Moreover, we have $\iota(g') = g$ by construction.
	Using this equality and the formulas for $\iota(s'_i)$, we obtain

		\begin{align*}
\iota(O_{\fram}^{w_i})
		&\subseteq 
		O_{\fram}^{w_i}
		\quad\text{for } i \le m-1,\qquad
		\iota(O_{\fram}^{w'^{\dagger}_{m-1}})
		\subseteq O_{\fram}^{w_{m-2} s_m s_{m-1} s_m}
		= O_{\fram}^{s_m w_{m-2} s_{m-1} s_m}
		\overset{\text{claim}}{=} O_{\fram}^{w_{m-1}},\\
\iota(O_{\fram}^{w_{m+j}})
&\subseteq O_{\fram}^{w_{m-2} (s_{m-1}s_m s_{m-1} s_{m-2}\ldots s_{m-(j+1)}) s_m}
= O_{\fram}^{w_{m+j+1} s_m}
\overset{\text{claim}}{=} O_{\fram}^{w_{m+j+1}}, \qquad j \geq 0,
\end{align*} 
where the two claims are proved below. 

	\noindent \textbf{Proof of claims.}
	In this nonsplit case the twisted Frobenius map $\varphi: L \to L$
	induces an automorphism $\psi = \mathrm{Int}(s_m): W_{\mu} \to W_{\mu}$, so in particular
	$\psi(s_m) = s_m$, as in the case where $\SO(\Lambda')_{\Fp}$ is split.
	Applying Theorem~\ref{Thm:Criteria} with $(v,v') = (s_m,1)$ yields the first claim.
	For the second, we use the same theorem with $(v,v') = (1,s_m)$; this is allowed because
	$s_m$ commutes with $s_i$ for $i \leq m-2$ and with $s_{m-1}s_m s_{m-1}$ (by direct verification).
	This completes the proof of Theorem~\ref{Thm:MainThmOrthog} when $n$ is odd and
	$\SO(\Lambda')_{\Fp}$ is nonsplit.
	Hence Theorem~\ref{Thm:MainThmOrthog} is proved in all cases.
	
	\begin{remark}\label{Rmk:n=1Embed}
	In the case $n=1$, the Shimura variety $\Sh_{\GSpin(0,2),\Fpbar}$ is $0$-dimensional. Accordingly, $\SO(0,2)_{\Fpbar}$ is a torus, so its associated $G$-zip stack is a point, whereas the $G$-zip stack attached to $(\SO(1,2)_{\Fpbar},\mu)$ has two orbits, corresponding to the $\mu$-ordinary and superspecial loci of $\Sh_{\GSpin(1,2),\Fpbar}$. The theorem thus implies that if $\kappa=\Fp$, then $\Sh_{\GSpin(0,2),\Fpbar}$ maps to the $\mu$-ordinary locus of $\Sh_{\GSpin(1,2),\Fpbar}$; if $\kappa=\Fpsq$, it maps to the superspecial locus. Compare also \S~\ref{S:ZerodimKS}.
\end{remark}

     \subsection{$a$-number of EO strata under the Kuga-Satake embedding}

For an abelian variety $A$ over $\Fpbar$, its $a$-number is defined by
\[
a(A)=\dim_{\Fpbar}\Hom_{\Fpbar}(\alpha_p,A).
\]
This invariant gives rise to a stratification of the Siegel moduli space
$\mathcal{A}_{g,N,\Fpbar}$, usually called the \emph{$a$-number stratification}; see, for example, \cite{EvdG09,Harashita04}. This stratification was generalized by Wedhorn \cite{bruh} to a broader class of Shimura varieties using the theory of zip period maps and group-theoretic methods; the resulting stratification is called the \emph{Bruhat stratification}. In general, the Bruhat stratification is coarser than the EO stratification. Hence, with respect to the Kuga-Satake embedding, the $a$-number of an EO stratum of $\Sh_{\GSpin(n,2), \Fpbar}$ is well-defined.
	\begin{corollary}\label{Cor:anumber}
		For each $n \geq 1$, the superspecial EO stratum $\Sh_{\GSpin(n,2), \Fpbar}^{\mathrm{ssp}}$ consists of superspecial abelian varieties, i.e.
		\[
		\Sh_{\GSpin(n, 2), \Fpbar}^{\mathrm{ssp}}
		= \iota^{-1}\bigl(\Sh_{\GSp_{2\cdot 2^n}, \Fpbar}^{\mathrm{ssp}}\bigr).
		\]
		With respect to the Kuga-Satake embedding, the ordinary EO stratum
		$\Sh_{\GSpin(n,2), \Fpbar}^{\mu\text{-}\mathrm{ord}}$ has $a$-number $0$, the superspecial locus
		$\Sh_{\GSpin(n, 2), \Fpbar}^{\mathrm{ssp}}$ has $a$-number $2^n$, while all other EO strata have $a$-number $2^{n-1}$ (when $n\geq 2$). 
	\end{corollary}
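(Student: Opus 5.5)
The plan is to argue purely at the level of zip stacks. By functoriality of zip period maps (\S~\ref{S:ZipPerMap}) applied to the Kuga--Satake morphism \eqref{Eq:ReductivePairDiag}, the isomorphism class of $A[p]$ for the Kuga--Satake abelian variety at a point depends only on the $\GSp$-zip orbit. That orbit is in turn the image of the EO stratum $w\in{}^\mu W$ of the point. So $a(w)$ is a well-defined function on ${}^\mu W$, and it depends only on the normalized zip datum $(Z^{\data}_\mu)_{\Fpbar}$ with its frame, not on the global quadratic space $V$. In particular, when computing $a(w)$ we may use any auxiliary quadratic space (split or nonsplit at $p$) and any natural embedding from \S~\ref{S:ShiEmb} that realizes the relevant zip datum.

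Two structural inputs drive the induction. First, by \eqref{Eq:EmdKSInt} the embedding $\Sh_{\GSpin(n-1,2)}\hookrightarrow\Sh_{\GSpin(n,2)}$ is compatible with $(A,\lambda)\mapsto(A\times A,\lambda\times\lambda)$ on Siegel varieties. Hence if an EO stratum $w'$ maps into $w$ as in Theorem~\ref{Thm:MainThmOrthog}, then $a(w)=2a(w')$. Second, the $a$-number is upper semicontinuous. Combined with the closure relations of Theorem~\ref{Thm:EOAbel}, this gives $w'\preceq w\Rightarrow a(w')\ge a(w)$.

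The base cases come from the zero-dimensional varieties of \S~\ref{S:0DimGSpin}. There the Kuga--Satake abelian variety is an elliptic curve with CM by $\kk$. It is ordinary if $p$ splits in $\kk$ and supersingular (hence superspecial) if $p$ is inert. By Remark~\ref{Rmk:n=1Embed}, for $n=1$ the split case lands in the $\mu$-ordinary stratum $w_1$ and the inert case in $w_0$. Doubling then gives $a(w_1)=0$ and $a(w_0)=2$ for $n=1$.

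For the inductive step, I would first show that $w_0$ is superspecial with $a=2^n$. Theorem~\ref{Thm:MainThmOrthog} sends $w'_0\mapsto w_0$ for all $n\ge2$, so the image of the superspecial stratum is a product $A\times A$ of superspecial abelian varieties, which is again superspecial. Hence $a(w_0)=2\cdot2^{n-1}=2^n$, which is the maximal value $\rg$. Since $a=\rg$ characterizes superspecial abelian varieties, and $w_0$ is the unique stratum with this property by the next step, this yields the equality $\Sh^{\mathrm{ssp}}=\iota^{-1}(\Sh^{\mathrm{ssp}}_{\GSp})$.

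Similarly, $w'_{n-1}\mapsto w_n$, and by induction the image is ordinary, so $a(w_n)=0$.

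For the intermediate strata I split by parity.

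\textbf{$n$ odd.} Each $w_j$ with $1\le j\le n-1$ is the image of some $w'_i$ with $1\le i\le n-2$, or of the middle strata. For the middle value $i=(n-1)/2$ we may choose either a split or a nonsplit $\Lambda'$ (allowed, since only the zip datum matters). These two choices hit $j=(n-1)/2$ and $j=(n+1)/2$ respectively. By induction each preimage has $a=2^{n-2}$, so $a(w_j)=2^{n-1}$.

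\textbf{$n$ even.} Every $j\ne n/2$ is hit in the same way. The two middle strata $w_{m-1}$ and $w^\dagger_{m-1}$ (here $m-1=n/2$) are not hit. For $n\ge4$ I use the chain $w_{m-2}\preceq w_{m-1},w^\dagger_{m-1}\preceq w_m$ together with semicontinuity:
\[
2^{n-1}=a(w_m)\le a(w_{m-1}),\,a(w^\dagger_{m-1})\le a(w_{m-2})=2^{n-1}.
\]
This squeezes both middle values to $2^{n-1}$.

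The main obstacle is the base case $n=2$. There $w_1$ and $w_1^\dagger$ sit directly between the ordinary stratum ($a=0$) and the superspecial stratum ($a=4$), and neither is an image stratum, so neither the induction nor the squeeze applies. My first attempt would be a direct computation. I would take the representative $g w$ with $w\in\{s_1,s_2\}$ (cf.\ \eqref{Eq:ZmuOrbFram}), push it through $\rho_{\KS}$ to $\GSp_8$, and compute $a=\dim(\ker F\cap\ker V)$ on the $4$-dimensional Hodge and conjugate pieces of $C^+(\Lambda)_{\Fpbar}$ via the $u$-splitting from \S~2.3. I expect this to give $a=2$.

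If that computation is messy, the fallback uses the exceptional isogeny $\GSpin(2,2)\sim\GL_2\times\GL_2$. This identifies the Kuga--Satake abelian variety with a product-type object $E_1\otimes E_2$, realized as $(E_1\times E_2)^{2}$ up to isogeny. The two middle strata then correspond to exactly one of $E_1,E_2$ being supersingular, which gives $a=2$.
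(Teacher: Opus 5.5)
Your reduction from general $n$ to $n=2$ is a legitimate alternative to the paper's. The paper invokes Wedhorn's result that all EO strata other than the $\mu$-ordinary and superspecial ones form a single Bruhat stratum, so it only has to follow one middle stratum down a chain of embeddings. You instead reach every intermediate stratum through Theorem~\ref{Thm:MainThmOrthog} and the doubling $A\mapsto A\times A$: you choose $\Lambda'$ split or nonsplit when $n$ is odd, and you squeeze $w_{m-1},w^{\dagger}_{m-1}$ by semicontinuity when $n\ge 4$ is even. (Small slip: by Theorem~\ref{Thm:MainThmOrthog} the split $\Lambda'$ hits $j=(n+1)/2$ and the nonsplit one hits $j=(n-1)/2$, not conversely.)

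The gap is the base case $n=2$, which you yourself identify as the crux and do not settle. The $\GSp_8$ computation is only announced. The $\GL_2\times\GL_2$ fallback works only when $\SO(\Lambda)_{\Fp}$ is split, i.e.\ when $p$ splits in $F=Z(C^+(V))$; only then do the idempotents of $O_F\otimes\Zp\cong\Zp\times\Zp$ decompose the $p$-divisible group into height-$2$ pieces. When $\SO(\Lambda)_{\Fp}$ is nonsplit, Frobenius interchanges the two geometric $\GL_2$-factors, so there is no pair $E_1,E_2$, and the two middle strata are in fact supersingular.

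This nonsplit case is part of the statement at $n=2$, and your induction also depends on it. For $n=3$ the stratum $w_1$ is hit only from the middle strata of a \emph{nonsplit} rank-$4$ lattice $\Lambda'$. The squeeze $w_0\preceq w_1\preceq w_2$ only gives $4\le a(w_1)\le 8$. Every case $n\ge 4$ is then built on $n=3$, so the whole chain rests on the missing computation.

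To close the gap, do what the paper does at $n=2$:
\begin{enumerate}
\item Use the PEL description and Morita equivalence ($O_B\otimes\Zp\cong M_2(\Zp)$) to reduce to an abelian surface $A$ with $O_F\otimes\Zp$-action, whose Kuga--Satake image is $A\times A$.
\item Use that the two $1$-dimensional EO strata of the Hilbert modular surface have $(f,a)=(1,1)$ when $p$ splits in $F$ (elementary, from $A[p^\infty]\cong G_1\times G_2$) and $(f,a)=(0,1)$ when $p$ is inert (Goren--Oort).
\end{enumerate}
This gives $a=2$ in both cases. Equivalently, compute $\dim(\ker\rF\cap\ker\rV)$ directly on $M=M_1\oplus M_2$, with $\rF,\rV$ exchanging the two eigenspaces in the inert case.

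Also, in the split case, state the decomposition as an exact one of $p$-divisible groups (idempotents plus Morita), not ``$(E_1\times E_2)^2$ up to isogeny''. The $a$-number is not an isogeny invariant.
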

	\begin{proof}
		We first consider the case \(n=1\). In this case \(C^+(V)\cong B\) is a quaternion algebra over \(\Q\), and the Shimura variety \(\Sh_{\GSpin(V)}\) is of PEL type: it parametrizes principally polarized abelian surfaces with \(O_B\)-action, namely triples \((A,\iota:O_B\to \End(A),\lambda:A\xrightarrow{\sim} A^{\vee})\). The Kuga--Satake embedding is given by forgetting \(\iota\), so the EO stratification is determined by the isomorphism class of \((A,\iota,\lambda)[p]\). The hyperspecial assumption at \(p\) implies \(B\otimes \Q_p\cong M_2(\Q_p)\), hence \(O_B/p\cong M_2(\Fp)\). By Morita equivalence, there are only two isomorphism classes of such objects, represented by \(E^{\mathrm{ord}}\times E^{\mathrm{ord}}\) and \(E^{\mathrm{ss}}\times E^{\mathrm{ss}}\), equipped with their tautological \(\iota\) and \(\lambda\), where \(E^{\mathrm{ord}}\) and \(E^{\mathrm{ss}}\) denote an ordinary and a supersingular elliptic curve, respectively. This verifies the assertion on \(a\)-numbers for \(n=1\).

Now assume \(n\ge 2\). By \cite[p.~553]{bruh}, the EO strata that are neither ordinary nor superspecial form a single Bruhat stratum of the \(\GSpin\) Shimura variety, and hence all have the same \(a\)-number. Choose a filtration of quadratic subspaces \(V_1\subseteq \cdots \subseteq V_n=V\) together with a sequence of self-dual \(\Z_{(p)}\)-lattices \(\Lambda_1\subseteq \cdots \subseteq \Lambda_n=\Lambda\). This gives a sequence of morphisms of reductive pairs \((\SO(\Lambda_1),\mu_1)\to \cdots \to (\SO(\Lambda_n),\mu_n)\), and hence a sequence of morphisms between geometric mod \(p\) Shimura varieties
\[
\Sh_{\SO(\Lambda_1),\Fpbar}\to \cdots \to \Sh_{\SO(\Lambda_n),\Fpbar}.
\]
By Theorem~\ref{Thm:MainThmOrthog}, best seen from the illustrating diagram in \S~\ref{S:DiagOrthog}, together with the description of the embedding \(\Sh_{\GSp_{2\rg'},\Fpbar}\hookrightarrow \Sh_{\GSp_{2\rg},\Fpbar}\) in \S~\ref{S:ShiEmb}, we are reduced to the case \(n=2\). Thus it remains to show that in this case the two \(1\)-dimensional EO strata have \(a\)-number \(2\).

In the case \(n=2\), the Shimura variety \(\Sh_{\GSpin(\Lambda)}\) is again of PEL type. More precisely, one has \(C^+(V)\cong B\otimes_\Q F\), where \(B\) is a quaternion algebra over \(\Q\) and \(F=Z(C^+(V))\) is either \(\Q\oplus\Q\) or a real quadratic field. For our purposes, we may assume we are in the latter case, the former being a degenerate one. The integral model \(\mathcal{S}h_{\GSpin(\Lambda),\Z_p}\) over \(\Z_p\) parametrizes triples \(\underline{A}=(A,\iota:O_D\otimes \Z_p\to \End(A),\lambda:A\xrightarrow{\sim} A^{\vee})\), where \(A\) is an abelian variety of relative dimension \(4\), \(O_D=O_B\otimes O_F\), and \(O_B\) is a maximal order of \(B\). As before, the Kuga--Satake embedding is given by forgetting the \(O_D\)-action, so the EO stratification is governed by the isomorphism class of \(\underline{A}[p]\). By the hyperspecial assumption at \(p\), we have
\(
O_B\otimes O_F\otimes \Z_p \cong M_2(O_F\otimes \Z_p)
\)
and that $p$ is unramified in $F$. 
 By Morita equivalence, such a \(4\)-dimensional abelian variety is Morita-equivalent to an abelian surface with \(O_F\otimes \Z_p\)-action. Thus we are further reduced to the split case \(B=M_2(\Q)\), in which \(\mathcal{S}h_{\GSpin(\Lambda),\Z_p}\) is a Hilbert modular surface and, under Morita equivalence, the Kuga--Satake embedding is given by \(A\mapsto A\times A\). We then conclude from the description of the \((f,a)\)-invariants of EO strata for Hilbert modular surfaces in \S~\ref{S:ExampHilbert} below. 
		
	\end{proof}
	
		\subsection{The Illustrative Diagram (orthogonal)}\label{S:DiagOrthog}
        The diagram below illustrates Theorem~\ref{Thm:MainThmOrthog} and related phenomena.
\begin{enumerate}
    \item Each \(\boxed{?}\) represents an EO stratum, and the number \(?\) inside the box denotes its dimension. The rightward (dashed) arrows indicate the closure relations among the strata. For example, when \(n=2\), the two arrows \(\boxed{2}\dashrightarrow  \boxed{1}\) show that the dimension-\(2\) stratum specializes to the two distinct dimension-\(1\) strata. The downward arrows describe the image EO strata under embeddings of GSpin Shimura varieties considered above. For instance, the right downward arrow \(\boxed{1}\xrightarrow{\mathrm{ns}}\boxed{1}\) means that the two dimension-\(1\) EO strata of \(\Sh_{\GSpin(2,2),\Fpbar}=\Sh_{\GSpin(\Lambda'),\Fpbar}\) map to the dimension-\(1\) EO stratum of \(\Sh_{\GSpin(3,2),\Fpbar}\) when \(\SO(\Lambda')_{\Fp}\) is nonsplit over \(\Fp\) (equivalently, when \(\SO(V')_{\Qp}\) is nonsplit over \(\Qp\)).
    \item For \(n\) odd or \(n\) even split (meaning $\SO(\Lambda)_\Fp$ is split over $\Fp$), EO strata on/left of the \(p\)-rank-zero line are nonbasic with \(p\)-rank \(>0\), while those right of it are basic with \(p\)-rank \(0\). For \(n\) even nonsplit, EO strata left of the line are nonbasic with \(p\)-rank \(>0\), while those on/right of it are basic with \(p\)-rank \(0\).
    \item  For fixed \(n\), all EO strata inside the \emph{cone of middle \(a\)-numbers} have the same \(a\)-number, namely \(2^{n-1}\). The stratum outside the right blue line forms the superspecial loci and has \(a\)-number \(2^n\), whereas the stratum outside the left blue line is the maximal ordinary EO stratum, and has \(a\)-number \(0\).
\end{enumerate}

\begin{figure}[!htbp]
    \centering
    \includegraphics[width=\textwidth]{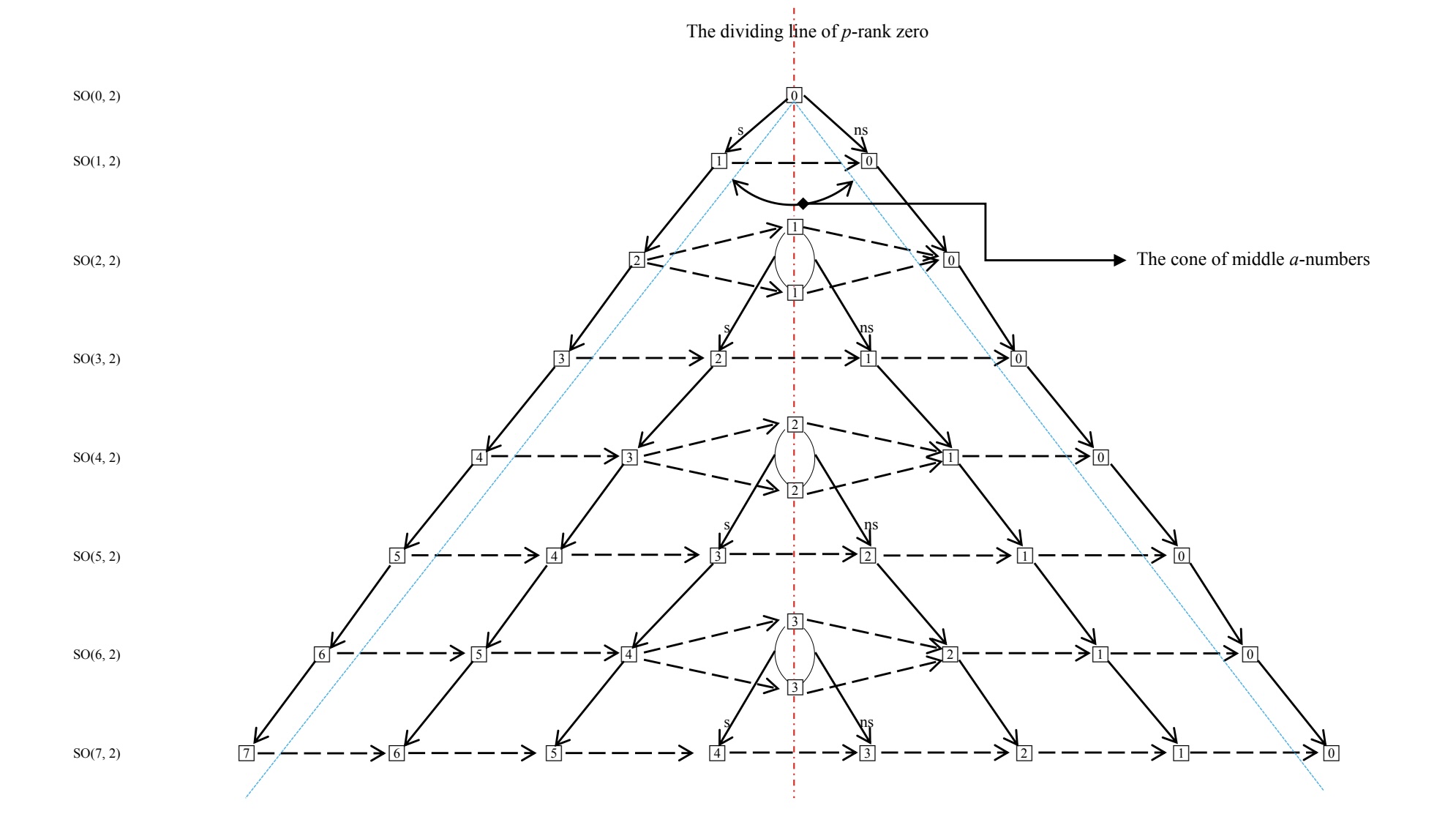}
   \caption{}
   \label{FunEOSpin}
   \[ \mathrm{s}= \mathrm{split}, \quad \mathrm{ns}= \mathrm{nonsplit}.\]
\end{figure}

\section{Zero-dimensional Kuga-Satake embedding}\label{S:ZerodimKS}
	 
In this section we consider the Kuga--Satake embedding
\(
\iota_{\KS}:\Sh_{\GSpin(0,2),\Fpbar} \longrightarrow \Sh_{\GL_2,\Fpbar},
\)
for the geometric mod \(p\) Shimura variety \(\Sh_{\GSpin(0,2),\Fpbar}\), whose target is the classical modular curve. The result and method of proof are similar to those in the \(n=1\) case of Theorem~\ref{Thm:MainThmOrthog}; see \S~\ref{Rmk:n=1Embed}. We treat this case separately because it is interesting in its own right and also helps clarify the proof of that case.

Let \(n=0\). Fix an \(\mathbb{F}_p\)-basis \(\delta_1,\delta_2\) of \(\Lambda_{\Fp}\) such that the quadratic form \(q\) on \(\Lambda_{\Fp}\) is given by either case~(I), \(q=x_1^2-x_2^2\), or case~(II), \(q=x_1^2-u x_2^2\), where \(u\in \mathbb{F}_p^\times\setminus (\mathbb{F}_p^\times)^2\). In this special case, we have the following elementary facts about \((\Lambda_{\Fp}, q)\): \textbf{(1)} As an \(\Fp\)-vector space, \(C^+(\Lambda_{\Fp})\) has dimension \(2\), with standard basis \(\mathfrak{B} = \{1, \delta_1 \delta_2\}\). \textbf{(2)} There is an \(\Fp\)-algebra isomorphism \(C^+(\Lambda_{\Fp}) \cong \Fp[X]/(X^2 - u)\). Hence, in case~(I) we have \(C^+(\Lambda_{\Fp}) \cong \Fp \times \Fp\); in case~(II) we have \(C^+(\Lambda_{\Fp}) \cong \Fp[\sqrt{u}] \cong \Fpsq\), where \(\sqrt{u} \in \mathbb{F}_{p^2}\) is a chosen square root of \(u\). Below we view case~(I) as the degenerate case of case~(II) by taking \(u=1\). \textbf{(3)} The basis \(\mathfrak{B}\) induces an isomorphism of algebraic groups over \(\Fp\),
\[
\epsilon_{\mathfrak{B}} : \GL_{\Fp}(C^+(\Lambda_{\Fp})) \xrightarrow[\cong]{\mathfrak{B}} \GL_{2,\Fp}.
\]
Using the basis \(\mathfrak{B}\) of \(C^+(\Lambda_{\Fp})\), left multiplication by \(x + y \delta_1 \delta_2 \in C^+(\Lambda_{\Fp})^\times\) has matrix \(\begin{psmallmatrix} x & y u \\ y & x \end{psmallmatrix}\). Hence, via \(\epsilon_{\mathfrak{B}}\), the embedding \(\iota : \GSpin(\Lambda_{\Fp}) \hookrightarrow \GL_{\Fp}(C^+(\Lambda_{\Fp}))\), induced by left multiplication of \(C^+(\Lambda_{\Fp})^\times\) on \(C^+(\Lambda_{\Fp})\), becomes the inclusion,
\(
x+y\delta_1\delta_2\mapsto \big(\begin{smallmatrix} x & yu \\ y & x \end{smallmatrix}\big),
\)
with image
\(
\mathrm{GSpin}(\Lambda_{\Fp})_{\mathfrak{B}}
=
\left\{ \big(\begin{smallmatrix} x & yu \\ y & x \end{smallmatrix}\big) \right\}
\subseteq \GL_{2,\Fp}.
\)

The Shimura variety \(\Sh_{\GSpin(0,2), \Fpbar}\) is defined over \(\Fp\) in case~(I) and over \(\Fpsq\) in case~(II). The corresponding conjugacy classes of cocharacters \( [\mu]_{\Fpbar}\) admit representatives over \(\Fp\) and \(\Fpsq\), respectively. To describe a standard representative cocharacter, we change basis for the base change \(C^+(\Lambda_{\Fpsq})\) over \(\Fpsq\),
\[
\widetilde{\mathfrak{B}}
:= \left\{ \delta'_1 := \frac{1}{2}(1 + \frac{\delta_1 \delta_2}{\sqrt{u}}),\,
\delta'_2 := \frac{1}{2}(1 - \frac{\delta_1 \delta_2}{\sqrt{u}}) \right\}.
\]
Let \(\epsilon_{\widetilde{\mathfrak{B}}} : \GL_{\Fpsq}(C^+(\Lambda_{\Fpsq})) \xrightarrow[\cong]{\widetilde{\mathfrak{B}}} \GL_{2,\Fpsq}\) be the induced isomorphism. Using this new basis, the algebra \(C^+(\Lambda_{\Fpsq})\) is identified with \(\Fpsq\times \Fpsq\), and an element \(a\delta'_1+b\delta'_2\in C^+(\Lambda_{\Fpsq})^\times\) acts on \(C^+(\Lambda_{\Fpsq})\) by left multiplication as \(\diag(a,b)\). Thus, under \(\epsilon_{\widetilde{\mathfrak{B}}}\), the base-changed embedding \(\iota_{\Fpsq}\) has image
$\GSpin(\Lambda_{\Fpsq})_{\widetilde{\mathfrak{B}}} =
T_{\Fpsq}\subseteq \GL_{2,\Fpsq}$,
where \(T\subseteq \GL_2\) denotes the standard diagonal torus. Identifying \(\GSpin(\Lambda_{\Fpsq})\) with \(\GSpin(\Lambda_{\Fpsq})_{\widetilde{\mathfrak{B}}}\), the conjugacy class \( [\mu]_{\Fpbar}\) for \(\Sh_{\GSpin(\Lambda), \Fpbar}\) admits a representative over \(\Fpsq\),
\[
\mu : \mathbb{G}_{m,\Fpsq} \to T_{\Fpsq}, \quad t \mapsto \diag(t,1).
\]
In the degenerate case~(I), \(\epsilon_{\widetilde{\mathfrak{B}}}\) is already defined over \(\Fp\), and accordingly \(\mu\) is also defined over \(\Fp\).

Now we encounter a distinction between case~(I) and case~(II): In case~(I), \(\epsilon_{\widetilde{\mathfrak{B}}}\) is compatible with the Frobenius endomorphism, i.e.\ the Frobenius map on the left-hand side corresponds to the termwise \(p\)-power Frobenius \(\Frob : \GL_{2,\Fpsq} \to \GL_{2,\Fpsq}\) on the right-hand side. In case~(II), \(\epsilon_{\widetilde{\mathfrak{B}}}\) is not compatible with the Frobenius endomorphism. Using \((\sqrt{u})^p = -\sqrt{u}\), one computes that the Frobenius on the left-hand side transports to the twisted one on \(\GL_{2, \Fpsq}\),
\[
A \longmapsto \tilde{w}_0 \Frob(A) \tilde{w}_0,\, \tilde{w}_0 := \antidiag(1,1).
\]
The zip group datum 
\(Z_{\mu}^{\mathsf{data}}
= (P := P_{\mu},\, Q := (P_{\mu^{-1}})^{\varphi},\, \varphi : L \to L^{\varphi})
\)  admits simple description in this special case: we have \(P=B_{\Fpbar}\) and \(L=T_{\Fpbar}\) in both case~(I) and case~(II), while \(Q=B_{-, \Fpbar}\) in case~(I) and \(Q=B_{\Fpbar}\) in case~(II). Here \(B\) and \(B_-\) stand for the usual upper and lower triangular Borel of \(\GL_2\) respectively. Moreover, \(\varphi : L \to L^\varphi\) becomes the usual Frobenius \(\Frob : T_{\Fpbar} \to T_{\Fpbar}\) in case~(I), sending \(\diag(t_1,t_2)\) to \(\diag(t_1^p,t_2^p)\), and the twisted Frobenius \(\tilde{w}_0 \Frob \tilde{w}_0\) in case~(II).

\begin{lemma}[Choice of frames]\label{Lem:ChoiFrameRk1}
	The triple \((B, T, \tilde{w}_0)\) (resp.\ \((B, T, 1)\)) is a frame for the zip group data \(Z_{\mu}^{\mathsf{data}}\) in case~(I) (resp.\ case~(II)).
\end{lemma}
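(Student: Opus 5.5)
The plan is to verify the four frame axioms directly for the explicit zip datum just described: $G=\GL_{2,\Fpbar}$, $P=B$, $L=T$, with $Q=B_-$ and $\varphi=\Frob$ in case~(I), and $Q=B$ and $\varphi=\tilde w_0\Frob\,\tilde w_0$ in case~(II). In both cases the Borel pair is the standard $(B,T)$, so axiom (i), $B\subseteq P=B$, holds trivially. The remaining work is to check (ii), (iii) and (iv) for the proposed elements $g=\tilde w_0$ and $g=1$ respectively, using only that $\tilde w_0$ normalizes $T$ and conjugates $B$ to $B_-$.

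\textbf{Case (I).} We take $g=\tilde w_0$, so that $g^{-1}=\tilde w_0$.
\begin{itemize}
\item For (ii), $\tilde w_0 B\tilde w_0=B_-=Q$.
\item For (iii), $B\cap L=T$, and $\varphi(T)=\Frob(T)=T$ since $\Frob$ is a surjective isogeny of the torus. On the other side, $g^{-1}Bg\cap L^{\varphi}=B_-\cap T=T$.
\item For (iv), $\varphi(T)=T=\tilde w_0T\tilde w_0\subseteq L^{\varphi}=T$.
\end{itemize}

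\textbf{Case (II).} We take $g=1$.
\begin{itemize}
\item For (ii), $B\subseteq Q=B$.
\item For (iii), $\varphi(B\cap L)=\tilde w_0\Frob(T)\tilde w_0=T=B\cap T$.
\item For (iv), $\varphi(T)=T=g^{-1}Tg\subseteq L^{\varphi}=T$.
\end{itemize}
Here we use that $L^{\varphi}=T$, which holds because $\tilde w_0$ normalizes $T$.

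The only point that needs care is that the descriptions of $Q$ and $L^{\varphi}$ stated just before the lemma are correct. In particular, in case~(II) we should confirm that $Q=(P_{\mu^{-1}})^{\varphi}=B$. The argument is that $P_{\mu^{-1}}=B_-$ is defined over $\Fp$ in the $\widetilde{\mathfrak B}$-coordinates only up to the twist. Applying the twisted Frobenius gives $\tilde w_0\Frob(B_-)\tilde w_0=\tilde w_0B_-\tilde w_0=B$. In case~(I) the same computation with the untwisted $\Frob$ gives $Q=B_-$.

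With these identifications in hand, each axiom reduces to the elementary facts above, and the lemma follows.
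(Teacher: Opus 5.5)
Your proposal is correct and takes the same approach as the paper. The paper's proof only says that the frame axioms follow by direct verification from the explicit description of $Z_{\mu}^{\mathsf{data}}$ together with $B_-=\tilde{w}_0B\tilde{w}_0$, and you have written out that verification axiom by axiom, including the consistency check $Q=\tilde{w}_0\Frob(B_-)\tilde{w}_0=B$ in case~(II).
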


\begin{proof}
	This follows from a direct verification using the above explicit description of the zip group datum \(Z_{\mu}^{\mathsf{data}}\) and the identity \(B_-=\tilde{w}_0 B \tilde{w}_0\).
\end{proof}
	
\begin{proposition}\label{Prop:ZeroDimKuS}
Under the Kuga--Satake embedding
\(
\Sh_{\GSpin(0,2),\Fpbar}\to \Sh_{\GL_2,\Fpbar},
\)
the torus Shimura variety \(\Sh_{\GSpin(0,2),\Fpbar}\) is mapped into the ordinary locus in case~(I), and into the superspecial locus in case~(II).
\end{proposition}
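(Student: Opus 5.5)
The plan is to argue exactly as in \S~\ref{EOunderKudlaEmbed}, using the functoriality of zip period maps (\S~\ref{S:ZipPerMap}). The Kuga--Satake map of reductive pairs $(\GSpin(\Lambda),\mu)\to(\GL(C^+(\Lambda)),\mu_{\mathrm{Siegel}})$ induces a morphism of zip stacks $\GSpin(\Lambda)_{\Fpbar}\text{-}\mathsf{Zip}^{\mu}\to \GL_{2,\Fpbar}\text{-}\mathsf{Zip}^{\mu}$. Since $\GSpin(\Lambda)$ is a torus here, the source zip stack has a single point, so the whole of $\Sh_{\GSpin(0,2),\Fpbar}$ lands in one EO stratum of the modular curve. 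It therefore suffices to determine the $E_\mu$-orbit in $\GL_2(\Fpbar)$ containing the image of the unique framed orbit of the torus zip datum.

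First I will fix the framed description on the $\GL_2$ side. Normalize via $\widetilde{\mathfrak B}$, so that $\mu_{\mathrm{Siegel}}=\mu: t\mapsto\diag(t,1)$. In $\GL_2$-terms the zip datum of the modular curve has $P=B$, $Q=B_-$ (with the untwisted Frobenius, because $\GL(C^+(\Lambda))$ is split over $\Fp$ and the relevant Frobenius is the one on $\GL(C^+(\Lambda_{\Fp}))$). Some care is needed here. The Frobenius on $\GL(C^+(\Lambda_{\Fp}))$ transported by $\epsilon_{\widetilde{\mathfrak B}}$ is $A\mapsto \tilde w_0\Frob(A)\tilde w_0$ in case~(II). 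I would rather work with the $\Fp$-rational basis $\mathfrak B$ on the target, where the Frobenius is $\Frob$ and $\mu$ is conjugate by the base-change matrix $C=\epsilon$ (from $\widetilde{\mathfrak B}$ to $\mathfrak B$). Equivalently, I will note that the target zip datum in $\widetilde{\mathfrak B}$-coordinates is $(B,\ \tilde w_0B_-\tilde w_0=B,\ \tilde w_0\Frob\tilde w_0)$ in case~(II) and $(B,B_-,\Frob)$ in case~(I). This matches Lemma~\ref{Lem:ChoiFrameRk1}, now applied to the whole $\GL_2$. Its frames are $(B,T,\tilde w_0)$ and $(B,T,1)$ respectively. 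By Theorem~\ref{Thm:ZipstackTopWeyl}, ${}^\mu W=\{1,\tilde w_0\}$, and the orbit of $g\cdot 1$ is the closed superspecial (dimension $0$) stratum, while that of $g\tilde w_0$ is the open ordinary one.

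Next I will locate the image. The torus zip datum is $(T,T,T,\varphi)$, with $\varphi=\Frob$ in case~(I) and $\tilde w_0\Frob\tilde w_0$ in case~(II), since the Frobenius of the source is compatible with that of the target. Its unique orbit is $E\cdot 1$, which maps to the $E_\mu$-orbit of $1\in\GL_2(\Fpbar)$. In case~(I) the frame element is $g=\tilde w_0$, so $1=g\tilde w_0$ corresponds to $w=\tilde w_0$ of length $1$, that is, the ordinary stratum. In case~(II) we have $g=1$, so $1=g\cdot 1$ corresponds to $w=1$, the $0$-dimensional superspecial stratum. The dimension statement comes from the $\GL_2$ analogue of Theorem~\ref{Thm:EOAbel}.

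The main obstacle is bookkeeping: checking that the target zip datum really is transported consistently. That is, the identification of the modular curve's zip datum in the $\widetilde{\mathfrak B}$ coordinates must use the same twisted Frobenius as the source, so that both frames are read in one normalization. I would verify this directly, using $(\sqrt u)^p=-\sqrt u$ and the identity $B_-=\tilde w_0B\tilde w_0$. As a sanity check, I would confirm the case~(II) answer independently through moduli. There the abelian variety is an elliptic curve with CM by $O_{\kk}$ with $p$ inert, hence supersingular, whereas for $p$ split the CM curve is ordinary.
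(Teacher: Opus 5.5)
Your proposal is correct and is essentially the paper's own proof. It uses functoriality of the zip period map and the fact that the torus zip stack is a point, so everything lands in the $E_\mu$-orbit of $1\in\GL_2(\Fpbar)$. It then reads off that orbit from the frames $(B,T,\tilde w_0)$ in case~(I) and $(B,T,1)$ in case~(II) of Lemma~\ref{Lem:ChoiFrameRk1}, giving $1=\tilde w_0\cdot\tilde w_0$ (ordinary) and $1=1\cdot 1$ (superspecial).

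When writing it up, drop your first parenthetical claim that the target datum is $(B,B_-,\Frob)$ in both cases. State from the outset the corrected $\widetilde{\mathfrak B}$-description $(B,B,\tilde w_0\Frob\tilde w_0)$ for case~(II). Also make the ``respectively'' that matches frames to cases unambiguous.
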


\begin{proof}
We use the functoriality of the zip period map defining the EO stratifications on \(\Sh_{\GSpin(0,2),\Fpbar}\) and \(\Sh_{\GL_2,\Fpbar}\), that is, the commutative diagram
\[
\xymatrix{
\Sh_{\GSpin(\Lambda),\Fpbar}\ar[d]\ar[r]
& \GSpin(\Lambda_{\Fpbar})\text{-}\mathsf{Zip}^{\mu}\ar[d]\ar[r]^{\tilde{\mathfrak{B}}}_{\cong}
& T_{\Fpbar}\text{-}\mathsf{Zip}^{\mu}\ar[d]\\
\Sh_{\GSp(C^+(\Lambda))_{\Fpbar}}\ar[r]
& \GSp(C^+(\Lambda)_{\Fpbar})\text{-}\mathsf{Zip}^{\mu}\ar[r]^{\tilde{\mathfrak{B}}}_{\cong}
& \GL_{2,\Fpbar}\text{-}\mathsf{Zip}^{\mu}.
}
\]
Since \(\Sh_{\GSpin(0,2),\Fpbar}=\Sh_{\GSpin(\Lambda),\Fpbar}\) is zero-dimensional, its EO stratification is trivial. Hence the image of the inclusion \(T_{\Fpbar}\to \GL_{2,\Fpbar}\) lies in the \(E_\mu\)-orbit of \(1\in \GL_2(\Fpbar)\). By our choice of frames in \eqref{Lem:ChoiFrameRk1} and by Theorem~\ref{Thm:ZipstackTopWeyl}, we have
\[
1\in \GL_2^{w_0}=E_\mu\cdot (w_0w_0)
\quad\text{in case~(I),}
\qquad
1\in \GL_2^{1}=E_\mu\cdot (1\cdot 1)
\quad\text{in case~(II).}
\]
These orbits correspond to the ordinary locus in case~(I) and to the superspecial locus in case~(II).
\end{proof}

	\begin{remark}
		Consider the case where the map \(C^+(\Lambda_{\Fp})\hookrightarrow \GL_{2,\Fp}\) arises as the reduction modulo \(p\) of an embedding \(\kk\hookrightarrow M_2\), where \(\kk\) is an imaginary quadratic field. This proposition is consistent with the classical result due to Deuring that an elliptic curve over \(\Q\) with CM by \(O_{\kk}\) and good reduction at \(p\) has \emph{ordinary} reduction if and only if \(p\) \emph{splits} in \(\kk\).
	\end{remark}

	\section{Newton stratifications and Newton cocharacters}\label{S:NewtonStra} 
	
		\subsection{Newton stratification for general Shimura varieties} In this subsection we consider a mod $p$ Shimura variety $\Sh_{G, \kappa}$ of Hodge type over a finite field $\kappa\subseteq \Fpbar$, with attached good reduction mod $p$ data as in \eqref{Eq:ModpData}.  
		
		To discuss the Newton stratification on the mod $p$ Shimura variety $\Sh_{G,\Fpbar}$, we first recall some group-theoretic notions. 
		Set $L(\kappa) = W(\kappa)[1/p]$ and take the reductive pair
		\(
		(\mathcal{G},\mu_{W(\kappa)}) \otimes_{W(\kappa)} L(\kappa),
		\) over the local field $L(\kappa)$, together with the Galois group
		\(
		\Gamma = \mathrm{Gal}(\overline{\Q}_p / L(\kappa))
		\) as the input. 
		Attached to this datum is a finite partially ordered set $(B(G,\mu),\leq)$, the Kottwitz set associated with $(G_{L(\kappa)},\mu_{L(\kappa)})$, which we now recall following \cite{Kot85,Kot97}. 
		
		Let $B(G)=B(G_{L(\kappa)})$ be the set of $G(\Qpbrev)$-$\sigma$ conjugacy classes inside $G(\Qpbrev)$. It parametrizes the isomorphism classes of isocrystals with $G$ structure, and is classified by Kottwitz in \cite{Kot85, Kot97}. Let $C(G, \mu)=C(\mathcal{G}, \mu)$ be the set of $\mathcal{G}(\Zpbrev)$-$\sigma$ conjugacy classes inside the coset $\mathcal{G}(\Zpbrev))\mu(p)\mathcal{G}(\Zpbrev)$. Clearly there is a natural map $d\colon C(G, \mu)\to B(G)$. Define $B(G, \mu)= B(G_{L(\kappa)}, \mu_{L(\kappa)})$ to be the image of $d$. To describe the partial order $\leq$, it is convenient to fix a Borel pair $(B, T)$ over $L(\kappa)$; this is possible in our good reduction Shimura variety setting, where $G_{L(\kappa)}$ is unramified (in particular, quasi-split). Attached to each class $[b]\in B(G, \mu)$ is a rational dominant Galois-invariant cocharacter $\nu_{b}\in X_*(T)_{\Q}^{\Gamma}$. For $[b], [b']\in B(G, \mu)$, $[b]\leq [b']$ if and only if $\nu_{b}-\nu_{b'}$ is a $\Q_{\geq 0}$-linear combination of Galois invariant positive coroots of $T$.
		
		The Shimura variety $\Sh_{G,\Fpbar}$ carries a universal isocrystal with $G_{L(\kappa)}$-structure.
		The Newton stratification on $\Sh_{G,\Fpbar}$ is defined via the map
		\(
		\mathsf{Nt} : \Sh_G(\Fpbar) \longrightarrow B(G,\mu),
		\)
		which sends each $\Fpbar$-point to the isomorphism class of the specialization of this universal isocrystal at that point.
		By \cite{RR96}, for each $[b] \in B(G,\mu)$ the subset
		\[
		\Sh_{G,\Fpbar}^{[b]} := \mathsf{Nt}^{-1}([b]) \subseteq \Sh_{G,\Fpbar}(\Fpbar)
		\]
		is the set of $\Fpbar$-points of a locally closed subscheme of $\Sh_{G,\Fpbar}$; it is called the Newton stratum of $\Sh_{G,\Fpbar}$ attached to $[b]$ and we still denote it by $\Sh_{G,\Fpbar}^{[b]}$.
		There is a rich literature on this stratification, including \cite{VW13,Ham15, SZ22, Man20, Vie20}, to which we refer for basic properties of Newton strata.

		There is a slightly more structural description of $B(G,\mu)$.
		By \cite{Kot85,Kot97}, there is an injective map
		\[
		B(G) \xrightarrow{(\nu,\,\kappa_G)}
		\bigl(X_*(T)\otimes \Q\bigr)_{\mathrm{dom}}^{\Gamma} \times \pi_1(G_{L(\kappa)})_{\Gamma},
		\]
		where $\nu$ is the Newton map and $\kappa_G$ is the Kottwitz map (in the form reformulated in \cite{RR96}).
		Then
		\[
		B(G,\mu)
		= \bigl\{ [b]\in B(G)\,\bigm|\,
		\nu([b]) \leq \nu([\mu(p)]),\;
		\kappa_G([b]) = \mu^{\flat} := \kappa_G([\mu(p)])
		\bigr\}.
		\]
		We refer to \cite{VW13} for a more detailed summary of $B(G,\mu)$ in the present  setting of good reduction mod $p$ Shimura data.
		It allows us to identify $B(G,\mu)$ with its image under the Newton map inside $(X_*(T)\otimes \Q)^{\Gamma}$, i.e.\ with the associated set of Newton cocharacters, which we denote by $N(G,\mu)$. For Siegel Shimura varieties $\Sh_{\GSp_{2\rg}, \Fpbar}$, the Newton cocharacters $N(G, \mu)$ correspond to classical Newton Polygons for polarized abelian varieties. 
		
		The partially ordered set $N(G,\mu)$ has a unique minimal element $\nu_{\mathrm{basic}}$. The corresponding Newton stratum, denoted $\Sh_{G,\Fpbar}^{\mathrm{basic}}$, is closed and contained in the closure of every other Newton stratum; it is called the \emph{basic locus} of $\Sh_{G,\Fpbar}$.
		Moreover, $N(G,\mu)$ has a unique maximal element, i.e., $\nu([\mu(p)])$.
		The corresponding stratum, denoted $\Sh_{G,\Fpbar}^{\mu\text{-ord}}$, is called the \emph{$\mu$-ordinary locus} of $\Sh_{G,\Fpbar}$; it coincides with the ordinary EO stratum and (hence) is open and dense, see \cite{Wortmann}.

		\begin{lemma}\label{Lem:adiso_BHmu}
			Let $f:H_1\to H_2$ be a morphism of quasi-split connected reductive groups over $L(\kappa)$.
			Assume that $f(Z(H_1))\subseteq Z(H_2)$ and that the induced morphism $f^{\ad}:H_1^{\ad}\rightarrow H_2^{\ad}$ is an isomorphism.
			Fix Borel pairs $(B_1,T_1)$ for $H_1$ and $(B_2,T_2)$ for $H_2$ such that $f(B_1)\subseteq B_2$ and $f(T_1)\subseteq T_2$.
			Let $\mu_1\in X_*(T_1)$ be $B_1$-dominant and put $\mu_2 := f\circ\mu_1 \in X_*(T_2)$.
			Then the induced map $f_*:B(H_1)\to B(H_2)$ restricts to an order-preserving bijection
			\[
			f_*:B(H_1;\mu_1)\xrightarrow{\sim}B(H_2;\mu_2).
			\]
		\end{lemma}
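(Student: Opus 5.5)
We use the Kottwitz description $B(H)\hookrightarrow (X_*(T)_\Q)^{\Gamma}_{\mathrm{dom}}\times\pi_1(H)_\Gamma$, $[b]\mapsto(\nu_b,\kappa_H(b))$, together with the characterization
\[
B(H,\mu)=\{[b]\mid \nu_b\le \bar\mu,\ \kappa_H(b)=\mu^\flat\}.
\]
Here $\bar\mu$ is the Galois average of $\mu$. The plan is to check four things in order: that $f_*$ lands in $B(H_2;\mu_2)$, that it is injective, that it is surjective, and that it preserves and reflects the order.

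\textbf{Step 1: $f_*$ lands in $B(H_2;\mu_2)$.} Functoriality of $\nu$ and $\kappa$ gives
\[
\nu_{f(b)}=f_*\nu_b,\qquad \kappa_{H_2}(f(b))=f_*\kappa_{H_1}(b).
\]
Since $f(B_1)\subseteq B_2$ and $f(T_1)\subseteq T_2$, dominance is preserved. Because $f^{\ad}$ is an isomorphism, $f$ carries positive coroots of $T_1$ to positive coroots of $T_2$, up to the identification on adjoint groups. Hence $\nu_b\le\bar\mu_1$ implies $f_*\nu_b\le\bar\mu_2$. The condition on $\kappa$ also holds, so the image lies in $B(H_2;\mu_2)$.

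\textbf{Step 2: injectivity.} Suppose $[b],[b']\in B(H_1;\mu_1)$ have the same image. Then $\kappa_{H_1}(b)=\kappa_{H_1}(b')=\mu_1^\flat$, so it suffices to show $\nu_b=\nu_{b'}$. I will decompose
\[
X_*(T_1)_\Q=X_*(Z_1^\circ)_\Q\oplus X_*(T_1^{\mathrm{der}})_\Q .
\]
On the derived/adjoint part, $f_*$ is injective because $f^{\ad}$ is an isomorphism. On the central part, the Galois-invariant central component of $\nu_b$ is determined by $\kappa_{H_1}(b)$: it is the image of $\kappa$ in $\pi_1(H_1)_\Gamma\otimes\Q\cong X_*(Z_1^\circ)^\Gamma_\Q$. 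Since $\kappa_{H_1}(b)=\mu_1^\flat$, this central part is the same for every element of $B(H_1;\mu_1)$, namely the central part of $\bar\mu_1$. So the two Newton points agree on both pieces, and $[b]=[b']$.

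\textbf{Step 3: surjectivity (the main obstacle).} I would avoid constructing preimages by hand and instead pass through adjoint groups. The standard fact (Kottwitz; cf.\ \cite{Kot97}, and as used in \cite{VW13}) is that $B(H;\mu)\to B(H^{\ad};\mu^{\ad})$ is a bijection for $H$ quasi-split. Its proof again rests on the central part of $\nu$ being fixed by $\kappa$, together with the fact that the fibers of $\kappa$ over $\pi_1$ surject compatibly. Applying this to $H_1$ and $H_2$ gives a commutative square
\[
\begin{array}{ccc}
B(H_1;\mu_1) & \xrightarrow{\;f_*\;} & B(H_2;\mu_2)\\
\downarrow & & \downarrow\\
B(H_1^{\ad};\mu_1^{\ad}) & \xrightarrow{\;f^{\ad}_*\;} & B(H_2^{\ad};\mu_2^{\ad}).
\end{array}
\]
The vertical maps are bijections, and the bottom map is a bijection because $f^{\ad}$ is an isomorphism carrying $\mu_1^{\ad}$ to $\mu_2^{\ad}$. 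Hence $f_*$ is bijective, which also reproves Step 2. If the adjoint-group fact has to be proved here rather than cited, this is the step requiring the most care. The hypothesis $f(Z(H_1))\subseteq Z(H_2)$ is what makes the square well defined, since it ensures $f$ induces a map $f^{\ad}$ on adjoint quotients compatible with the projections.

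\textbf{Step 4: order.} In both groups, $[b]\le[b']$ means $\nu_b-\nu_{b'}$ is a nonnegative rational combination of Galois-invariant positive coroots. This difference lies in the derived part, where $f_*$ matches coroots bijectively via $f^{\ad}$. Therefore $f_*$ preserves and reflects the order, and the bijection is an isomorphism of posets.
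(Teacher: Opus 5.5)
Your proposal is correct, and its core (Step 3) is the paper's argument: the paper also uses the square through adjoint groups, with the vertical bijections $B(H_i;\mu_i)\cong B(H_i^{\ad};\mu_i^{\ad})$ cited from Kottwitz \cite{Kot97} and the bottom map bijective because $f^{\ad}$ is an isomorphism. Your Steps 1, 2 and 4 are correct but redundant: Kottwitz's bijections are already isomorphisms of posets, so order preservation in both directions comes for free.
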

		
		\begin{proof}
			For $i=1,2$, the natural map $H_i \to H_i^{\ad}$ induces an isomorphism 
			\(
			B(H_i;\mu_i) \xrightarrow{\sim} B(H_i^{\ad};\mu_i^{\ad})
			\)
			of posets by \cite[(6.5.1)]{Kot97}.
			Since $f_*:B(H_1;\mu_1)\to B(H_2;\mu_2)$ is compatible with these identifications and $f^{\ad}$ is an isomorphism,
			it follows that $f_*:B(H_1;\mu_1)\to B(H_2;\mu_2)$ is an isomorphism of posets. 
		\end{proof}
		
		\subsection{Newton cocharacters for $\Sh_{\GSpin(\Lambda), \Fpbar}$}
		We describe the parametrization set $N(\GSpin(\Lambda)_{\Qp}, \mu)$ of Newton strata of $\Sh_{\GSpin(\Lambda), \Fpbar}$. 
		By Lemma \ref{Lem:adiso_BHmu}, the natural projection map of reductive pairs $(\GSpin(\Lambda)_{W}, \mu)\to (\SO(\Lambda)_{W}, \mu)$ induces an isomorphism of posets,
		\[ 
		N(\GSpin(\Lambda)_{\Qp}, \mu) \cong N(\SO(\Lambda)_{\Qp}, \mu).
		\]
		Hence it suffices to describe $N(\SO(\Lambda)_{\Qp}, \mu)$. We do this, following \cite[\S7.3]{SZ22}. The quadratic space $(\Lambda,q)_{\Qp}$ is determined up to isometry by its discriminant (viewed in $\RAT_p^\times/\RAT_p^{\times2}$) and its Hasse invariant. Under the good reduction assumption ($\Lambda$ is self-dual), the Hasse invariant is trivial, so $(\Lambda,q)_{\Qp}$ is determined by its discriminant alone. Below, after identifying \(\SO(\Lambda)_{\Qp}\) via a choice of basis of \(\Lambda_{\Qp}\), we take the standard upper triangular Borel subgroup \(B=B_{\SO}\) as the Borel subgroup in the Borel pair \((B,T)\). 
		
		\subsubsection{If $n$ is odd} There is a $\mathbb{Z}_p$-basis $\delta_1,\delta_2,\cdots, \delta_{n+2}$ of $\Lambda_{\Qp}$ such that
		$$q=x_1x_{n+2}+x_2x_{n+1}+\cdots+x_{m}x_{m+2}+cx^2_{m+1}, \text{ for some }c\in \mathbb{Z}_p^\times.$$
		Let \(T=\Big\{\mathrm{diag}\bigl(t_1,t_2,\cdots,t_m,1,t_m^{-1},\cdots,t_2^{-1},t_1^{-1}\bigr)\Big\}\), and $\alpha_i^\vee\in X_*(T)$, $1\leq i\leq m$, be the cocharacter $$t\mapsto \mathrm{diag}(1,\cdots, 1, t, 1, \cdots, 1, t^{-1}, 1, \cdots, 1)$$
		where the $t$ and $t^{-1}$ are at the $i$-th and $2m+2-i$-th place respectively. The set $N(\SO(\Lambda)_{\Qp}, \mu)$ of Newton cocharacters consists of,
		$$b_1=\alpha_1^\vee,\ b_2=\frac{1}{2}(\alpha_1^\vee+\alpha_2^\vee),\ \cdots,\ b_m=\frac{1}{m}(\sum_{i=1}^m\alpha_i^\vee), \ 1.$$

		\subsubsection{If $n$ is even and $\SO(\Lambda)_{\Qp}$ is split} Then there is a $\mathbb{Z}_p$-basis $\delta_1,\delta_2,\cdots, \delta_{n+2}$ of $\Lambda_{\Qp}$ such that
		$$q=x_1x_{n+2}+x_2x_{n+1}+\cdots+x_{m}x_{m+1}.$$
		With notations as above, the set $N(\SO(\Lambda)_{\Qp}, \mu)$ of Newton cocharacters consists of
		$$b_1=\alpha_1^\vee,\ b_2=\frac{1}{2}(\alpha_1^\vee+\alpha_2^\vee),\ \cdots,\, b_m=\frac{1}{m}(\sum_{i=1}^m\alpha_i^\vee),\ b_m'=\frac{1}{m}(\sum_{i=1}^{m-1}\alpha_i^\vee-\alpha_m^\vee),\ 1.$$
		
		\subsubsection{If $n$ is even and $\SO(\Lambda)_{\Qp}$ is nonsplit} Then there is a $\mathbb{Z}_p$-basis $\delta_1,\delta_2,\cdots, \delta_{n+2}$ of $\Lambda_{\Qp}$ such that
		$$q=x_1x_{n+2}+\cdots+x_{m-1}x_{m+2}+x^2_{m}-cx^2_{m+1},\ \ c \text{ non-square in } \mathbb{Z}_p^\times.$$
		Let $T_0=\Big\{\mathrm{diag}(t_1,t_2,\cdots,t_{m-1},1,1,t_{m-1}^{-1},\cdots,t_2^{-1},t_1^{-1})\Big\}$ be the maximal split torus over $\Zp$ and $T$ be the centralizer of $T_0$. Then we have identification \(
         X_*(T)_{\Q}^{\Gamma} \cong X_*(T_0)_{\Q}. \)
        The set $N(\SO(\Lambda)_{\Qp}, \mu)$ of Newton cocharacters consists of
		$$b_1=\alpha_1^\vee,\ b_2=\frac{1}{2}(\alpha_1^\vee+\alpha_2^\vee),\, \cdots,\ b_{m-1}=\frac{1}{m-1}(\sum_{i=1}^{m-1}\alpha_i^\vee), \ 1.$$

        \begin{remark}\label{Rmk:ParOrderNewton}
            The partial order of $N(\SO(\Lambda)_{\Qp}, \mu)$ is given by 
            \[
            b_1 \geq b_2\geq \cdots \geq b_{m-1} \geq b_m, b_m'\geq 1,
            \]
        \end{remark}
        where there is no closure relation between $b_{m}$ and $b_{m}'$ when they both appear ($n$ even split case), and where the trivial cocharacter $1$ corresponds to the basic locus of $\Sh_{\GSpin(\Lambda), \Fpbar}$. By purity of Newton stratification, for each $1\leq j \leq m$, the Newton stratum $\Sh_{\GSpin(\Lambda), \Fpbar}^{[b_j]}$ has dimension $n+1-j$; in $n$ split even case, \[\dim\Sh_{\GSpin(\Lambda), \Fpbar}^{[b_m]} =\dim \Sh_{\GSpin(\Lambda), \Fpbar}^{[b_m']}= m-1.\]  
		
		\section[]{{$p$}-ranks of $\Sh_{\GSpin(\Lambda), \Fpbar}$ under the Kuga-Satake embedding}\label{S:PRank} 
	
        In this section, we determine the \(p\)-ranks of Newton strata of \(\Sh_{\GSpin(\Lambda),\Fpbar}\) via the Kuga--Satake embedding. More precisely, for a Newton stratum of \(\Sh_{\GSpin(\Lambda),\Fpbar}\), its \(p\)-rank is understood as the \(p\)-rank of the Newton stratum of the Siegel Shimura variety \(\Sh_{\GSp(C^+(\Lambda)),\Fpbar}\) containing its image under the Kuga--Satake embedding. We recall that, in a Siegel Shimura variety, all \(\Fpbar\)-points in a given Newton stratum have the same \(p\)-rank, equal to the multiplicity of the weight \(0\) among the Newton slopes of the corresponding Newton cocharacter. For \(\nu\in N(\GSp(C^+(\Lambda))_{\Qp},\mu)\), these Newton slopes are precisely the \(\Q\)-weights of \(\nu\) on the standard representation of \(\GSp(C^+(\Lambda))_{\Qp}\).

By the functoriality of the map \(\mathsf{Nt}\) defining the Newton stratification, we have a commutative diagram
\begin{equation}
\xymatrix{
\Sh_{\GSpin(\Lambda), \Fpbar}\ar[r]^{\mathrm{KS}}\ar[d]&\Sh_{\GSp(C^+(\Lambda)), \Fpbar}\ar[d] \\
N(\GSpin(\Lambda)_{\Qp}, \mu)\ar[r]^{N_{\mathrm{KS}}}& N(\GSp(C^+(\Lambda))_{\Qp}, \mu).}
\end{equation}
To ease notation, until the end of this section we sometimes omit \(\Qp\) in notations such as \(C^+(\Lambda)_{\Qp}\), \(\SO(\Lambda)_{\Qp}\), etc. Below we describe representatives on the \(\GSpin(\Lambda)\)-side for the Newton cocharacters in \(N(\GSpin(\Lambda)_{\Q_p},\mu)\), first for the nonbasic ones and then for the basic one.

We first lift the \(\alpha_i^\vee\)'s to cocharacters of \(\GSpin(\Lambda)\). Note that \(\alpha_i^\vee\) is a cocharacter of \(\SO(\Lambda')\subseteq \SO(\Lambda)\), attached to the quadratic subspace \(\Lambda'\) spanned by \(\delta_i\) and \(\delta_{n+3-i}\). By the commutative diagram,
\[
\xymatrix{\GSpin(\Lambda')\ar[r]\ar[d] &\GSpin(\Lambda)\ar[d]\\
\SO(\Lambda')\ar[r]&\SO(\Lambda),
}\]
for the purpose of lifting \(\alpha_i^\vee\), we reduce to the \(2\)-dimensional case, where \(q=x_1^2-x_2^2\), and \(\Lambda= \Zp \delta_1\oplus \Zp \delta_2\). Now \(C^+(\Lambda)\) is spanned by \(1,\delta_1\delta_2\), and \(\GSpin(\Lambda)=C^+(\Lambda)^\times\cong \Gm\times \Gm\). Note that \(\alpha^\vee\) has weight \(1\) (resp.\ \(-1\)) on \(\delta_1-\delta_2\) (resp.\ \(\delta_1+\delta_2\)). We consider the cocharacter
\[
\chi_{\alpha^\vee}:\Gm\to C^+(\Lambda)^\times,\qquad
t\mapsto t\frac{1+\delta_1\delta_2}{2}+\frac{1-\delta_1\delta_2}{2}.
\]
The induced conjugation action on \(\Lambda\) is then
\[
\delta_1\mapsto \frac{t+t^{-1}}{2}\delta_1+\frac{-t+t^{-1}}{2}\delta_2,\qquad
\delta_2\mapsto \frac{-t+t^{-1}}{2}\delta_1+\frac{t+t^{-1}}{2}\delta_2.
\]
Clearly, this means that it has weight \(1\) (resp.\ \(-1\)) on \(\delta_1-\delta_2\) (resp.\ \(\delta_1+\delta_2\)), and \(\chi_{\alpha^\vee}\) is a lift of \(\alpha^{\vee}\). With respect to the standard representation of \(\GSp(C^+(\Lambda))\) on \(C^+(\Lambda)\), it has weight \(1\) on \(1+\delta_1\delta_2\), and weight \(0\) on \(1-\delta_1\delta_2\).

It follows from the discussion above that for \(1\le j\le m\), the Newton cocharacter \(b_j\) of \(\SO(\Lambda)\) admits the lift to \(\GSpin(\Lambda)\) given by
\[
\nu_j:t\mapsto \prod_{i=1}^j\left(\sqrt[j]{t}\frac{1+\delta_i\delta_{n+3-i}}{2}+\frac{1-\delta_i\delta_{n+3-i}}{2}\right).
\]
Here of course, when \(n\) is even and \(\SO(\Lambda)\) is nonsplit, this formula is only for \(1\le j\le m-1\). When \(n\) is even and \(\SO(\Lambda)\) is split, we choose a lift of
\(
b_m'
\)
as
\[
\nu'_m:t\mapsto \left(\frac{1+\delta_m\delta_{m+1}}{2}+\sqrt[m]{t}\frac{1-\delta_m\delta_{m+1}}{2}\right)
\prod_{i=1}^{m-1}\left(\sqrt[m]{t}\frac{1+\delta_i\delta_{n+3-i}}{2}+\frac{1-\delta_i\delta_{n+3-i}}{2}\right).
\]
On the other hand, the basic element \(1\in N(\SO(\Lambda),\mu)\) corresponds on the \(\GSpin(\Lambda)\)-side to the rational central cocharacter
\[
\nu_{\mathrm{basic}}=\frac12\,\eta^\vee \in X_*(Z(\GSpin(\Lambda)))_{\Q},
\]
which, in Clifford-algebra scalar notation, is represented by \(t\mapsto t^{1/2}\).

\begin{theorem}\label{Thm:Prank}\begin{enumerate}
    \item The basic locus $\Sh_{\GSpin(\Lambda), \Fpbar}^{\mathrm{basic}}$ has slope $1/2$, with multiplicity $2^{n+1}$.
    \item For each \(1\le j\le m\), the Newton slopes attached to \(N_{\KS}(\nu_j)\) are
\[
i/j, \quad  \text{ with multiplicity } 2^{n+1-j}\binom{j}{i}, \quad 0\le i\le j.
\]
Hence the Newton stratum $\Sh_{\GSpin(\Lambda), \Fpbar}^{[\nu_j]}$ has $p$-rank \(2^{n+1-j}\). 
\item In case \(n\) is even and \(\SO(\Lambda)_{\Qp}\) is split, \(N_{\KS}(\nu_{m})\) and \(N_{\KS}(\nu'_{m})\) have the same Newton slopes. In particular, the Newton strata $\Sh_{\GSpin(\Lambda), \Fpbar}^{[\nu_m]}$ and $\Sh_{\GSpin(\Lambda), \Fpbar}^{[\nu'_m]}$ have the same \(p\)-rank, \(2^{m-1}\).
\end{enumerate}
\end{theorem}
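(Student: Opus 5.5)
The plan is to compute the weights of the lifted cocharacters $\nu_j$, $\nu'_m$ and $\nu_{\mathrm{basic}}$ directly on $C^+(\Lambda)$. Composing with $\rho_{\KS}$ (left multiplication) gives a cocharacter of $\GSp(C^+(\Lambda))$ lying in the same $\sigma$-conjugacy class as the Kuga--Satake image. By functoriality of $\mathsf{Nt}$, its $\Q$-weights on the standard representation are exactly the Newton slopes of $N_{\KS}(\nu_j)$. One should check that this cocharacter is (conjugate to) the dominant Newton cocharacter of the image class. This holds because $\rho_{\KS}\circ\nu_b$ is a Newton cocharacter for $\rho_{\KS}(b)$ whenever $\nu_b$ is one for $b$. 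So the slopes are the weights of $\rho_{\KS}\circ \nu_j$, regardless of dominance.

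For (1), the central cocharacter $t\mapsto t^{1/2}$ acts by the scalar $t^{1/2}$ on all of $C^+(\Lambda)$, which has dimension $2^{n+1}$. The slope is therefore $1/2$ with multiplicity $2^{n+1}$.

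For (2), write $e_i^{\pm}=\frac{1\pm\delta_i\delta_{n+3-i}}{2}$ for $1\le i\le j$. Since the forms $\delta_i,\delta_{n+3-i}$ span mutually orthogonal hyperbolic planes, the even products $\delta_i\delta_{n+3-i}$ pairwise commute. Each $e_i^\pm$ is an idempotent (after normalizing the hyperbolic pair as in the $2$-dimensional reduction), with $e_i^++e_i^-=1$. Hence $C^+(\Lambda)=\bigoplus_{\epsilon\in\{\pm\}^j} e_1^{\epsilon_1}\cdots e_j^{\epsilon_j}C^+(\Lambda)$. Left multiplication by $\nu_j(t)$ acts on the $\epsilon$-summand by $t^{\#\{i:\epsilon_i=+\}/j}$. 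It then remains to show that every summand has dimension $2^{n+1-j}$. For this, factor $C(\Lambda)\cong C(H_1)\hat\otimes\cdots\hat\otimes C(H_j)\hat\otimes C(\Lambda_0)$, where $H_i$ are the hyperbolic planes and $\Lambda_0$ is their orthogonal complement, of rank $n+2-2j$. In $C(H_i)\cong M_2$, the idempotent $e_i^\pm$ has rank one, cutting dimension by half. Then $e^\epsilon C(\Lambda)$ has dimension $2^{n+2-j}$, and splitting by parity gives $2^{n+1-j}$ in $C^+$. A cleaner route is to note that $e^\epsilon$ and $e^{\epsilon'}$ are conjugate by a unit: the odd element $\delta_i+\delta_{n+3-i}$ swaps $e_i^+$ and $e_i^-$, and left multiplication by a product of such elements, times an auxiliary vector if needed to preserve parity, intertwines the summands. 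The summands are thus equidimensional, each of dimension $2^{n+1}/2^j$. Counting the $\epsilon$ with exactly $i$ plus signs gives slope $i/j$ with multiplicity $2^{n+1-j}\binom{j}{i}$. The $p$-rank is the multiplicity of slope $0$, namely $2^{n+1-j}$.

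For (3), $\nu'_m$ differs from $\nu_m$ only in the $m$-th factor, where $e_m^+$ and $e_m^-$ are interchanged. The same decomposition yields the multiset of exponents $\#\{i<m:\epsilon_i=+\}+[\epsilon_m=-]$, over $m$. By the symmetry $\epsilon_m\mapsto-\epsilon_m$ and equidimensionality of summands, this is the same multiset as for $\nu_m$. Hence both have the same slopes and $p$-rank $2^{n+1-m}=2^{m-1}$, since $n=2m-2$.

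The main obstacle is the equidimensionality of the idempotent summands inside $C^+$ rather than $C$, especially when $j=m$ and $\Lambda_0=0$ (the even split case). There the parity fix cannot use an auxiliary vector. I would handle this case by using the unit $\delta_i\delta_k$-type even products, or by a direct spinor-module computation $C^+\cong \mathrm{End}(S^+)\times\mathrm{End}(S^-)$.
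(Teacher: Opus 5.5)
Your proof is correct in outline and has the same skeleton as the paper's. You split $C^+(\Lambda)$ along the $2^j$ products $e^\epsilon$ of commuting idempotents attached to the hyperbolic planes $\langle\delta_i,\delta_{n+3-i}\rangle$. You read off the scalar $t^{r/j}$ by which $\nu_j$ (resp.\ $\nu'_m$) acts on each piece. You then reduce to $\dim e^\epsilon C^+(\Lambda)=2^{n+1-j}$. You are right to renormalize: in the hyperbolic basis $\frac12(1\pm\delta_i\delta_{n+3-i})$ are not idempotent, and the idempotents there are $\delta_i\delta_{n+3-i}$ and $\delta_{n+3-i}\delta_i$.

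The two routes differ only in the equidimensionality step.
\begin{itemize}
\item The paper moves $\theta^+$ to $\theta^{\underline\sigma}$ by the automorphism of $C(\Lambda)$ induced by an isometry of $\Lambda$. For this to work, the isometry must be improper on the planes with $\sigma_i=-$, e.g.\ $\delta_i\leftrightarrow\delta_{n+3-i}$. The sign change $\delta_i\mapsto\sigma_i\delta_i$, $\delta_{n+3-i}\mapsto\sigma_i\delta_{n+3-i}$ actually written there fixes $\delta_i\delta_{n+3-i}$, so it does not move $\theta^+$. A correct improper isometry preserves $C^+$. When $\Lambda_0=0$ it swaps the two simple factors of $C^+$, so it is not inner; this is why that route works uniformly.
\item Your graded tensor factorization gives $\dim e^\epsilon C(\Lambda)=2^{n+2-j}$ directly. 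This is a perfectly good alternative.
\end{itemize}

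The one point to repair is the passage from $C$ to $C^+$, and it is not really an obstacle. Right multiplication by any invertible vector $w$ (e.g.\ $w=\delta_1+\delta_{n+2}$, with $w^2=1$) commutes with left multiplication by $e^\epsilon$. It maps $C^+$ bijectively onto $C^-$. Hence $\dim e^\epsilon C^+=\dim e^\epsilon C^-=\frac12\dim e^\epsilon C$, with no hypothesis on $\Lambda_0$. The same trick, $y\mapsto xyw$ with $x,w$ odd units, also completes your conjugation route.

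By contrast, your suggested fix via even units cannot work in the case you flag. Take $n$ even, $\SO(\Lambda)_{\Qp}$ split and $j=m$. Then $C^+\cong\mathrm{End}(S^+)\times\mathrm{End}(S^-)$ with $S=\Lambda^\bullet\langle\delta_1,\dots,\delta_m\rangle$. Taking $e_i^+=\delta_i\delta_{n+3-i}$, $e^\epsilon$ is the rank-one projector onto the monomial $\delta_I$, where $I=\{i:\epsilon_i=+\}$. Flipping a single sign changes the parity of $|I|$ and moves $e^\epsilon$ into the other simple factor. So no even unit $x$ satisfies $xe^\epsilon C^+=e^{\epsilon'}C^+$. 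Your other option, the spinor count $\dim S^+=\dim S^-=2^{m-1}$, does work. Part (1), the binomial count in (2), and the symmetry argument for (3) are fine.
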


\begin{proof} The assertion on the basic locus is clear. 
Fix \(1\le j\le m\), and consider the \(j\) elements in \(C^+(\Lambda)\),
\[
\theta_j^{\pm}:=\frac{1}{2}(1\pm \delta_j\delta_{n+3-j}),\ 
\theta_{j-1}^{\pm}:=\frac{1}{2}(1\pm \delta_{j-1}\delta_{n+4-j}),\ 
\cdots,\ 
\theta_1^{\pm}:= 
\frac{1}{2}(1\pm \delta_1\delta_{n+2}).
\]
One checks directly that these elements commute with each other, and for all \(1\le i \leq j\),
\[
\theta_i^+\theta_i^-=0=\theta_i^-\theta_i^+,\qquad
(\theta_i^{\sigma_i})^2=\theta_i^{\sigma_i}, \quad \sigma_i\in \{\pm\}.
\]
For \((\sigma_1,\sigma_2,\dots,\sigma_j)\in\{\pm\}^j\), set
\(
\theta^{\underline{\sigma}}:=\theta_1^{\sigma_1}\theta_2^{\sigma_2}\cdots \theta_j^{\sigma_j}.
\)
Left multiplication by \(\theta^{\underline{\sigma}}\) on \(C^+(\Lambda)\) induces an endomorphism of \(C^+(\Lambda)\), whose image is denoted by \(C_{\underline{\sigma}}\). As \(\underline{\sigma}\in\{\pm\}^j\) varies, we obtain \(2^j\) subspaces of \(C^+(\Lambda)\).

\noindent\textbf{Claim}: These \(C_{\underline{\sigma}}\) induce a direct sum decomposition of \(C^+(\Lambda)\),
\[
C^+(\Lambda)= \oplus_{\underline{\sigma}\in \{\pm\}^j} C_{\underline{\sigma}}, \text{ with } \dim C_{\underline{\sigma}}= 2^{n+1-j}.
\]
Assuming the claim, note that
\(
\nu_j=\prod_{i=1}^j(\sqrt[j]{t}\theta_i^+ + \theta_i^-).
\)
Its left multiplication action on \(C_{\underline{\sigma}}\) is via scalar multiplication by \(t^{r/j}\), where
\(
r=\#\{\,1\le i\le j\mid \sigma_i=+\,\}.
\)
It follows from the claim that the slope \(i/j\) has multiplicity \(2^{n+1-j}\binom{j}{i}\). Notice also that we have
\(
\nu'_m=(\theta_m^+ + \sqrt[m]{t}\theta_m^-)\prod_{i=1}^{m-1}(\sqrt[m]{t}\theta_i^+ + \theta_i^-).
\)
Thus its left multiplication action on \(C_{\underline{\sigma}}\) also has only one weight, namely \(r/m\), where
\[
r=\#\{\,1\le i\le m-1\mid \sigma_i=+\,\}+\#\{\,\sigma_m=-\,\}.
\]
It follows that for each \(0\leq i \leq m\), the multiplicity of the slope \(i/m\) is equal to \(\binom{m-1}{i}+\binom{m-1}{i-1}=\binom{m}{i}\), and therefore \(\nu_m\) and \(\nu'_m\) have the same Newton slopes.

\noindent\textbf{Proof of Claim}. For \(c\in C^+(\Lambda)\), we have
\begin{equation*}
\begin{split}
c
&=1\cdot c
=(\theta_j^+ + \theta_j^-)c
=\theta_j^+c + \theta_j^-c \\
&=(\theta_{j-1}^+ + \theta_{j-1}^-)\theta_j^+c + (\theta_{j-1}^+ + \theta_{j-1}^-)\theta_j^-c
=\theta_{j-1}^+\theta_j^+c+\theta_{j-1}^-\theta_j^+c+\theta_{j-1}^+\theta_j^-c+\theta_{j-1}^-\theta_j^-c
=\cdots .
\end{split}
\end{equation*}
and hence inductively
\(
c=\sum_{\underline{\sigma}} c_{\underline{\sigma}}, \underline{\sigma}\in \{\pm\}^j.
\)
If \(\sum_{\underline{\sigma}}c_{\underline{\sigma}}=0\) for some \(c_{\underline{\sigma}}\in C_{\underline{\sigma}}\), then acting by
\(
\theta^{\underline{\sigma}'}:=\theta_1^{\sigma'_1}\theta_2^{\sigma'_2}\cdots \theta_j^{\sigma'_j}
\)
on both sides, we get \(0=c_{\underline{\sigma}'}\) for all \(\underline{\sigma}'\in \{\pm\}^j\). Here we use the identities
\[
(\theta^{\underline{\sigma}})^2=\theta^{\underline{\sigma}},\qquad
\theta^{\underline{\sigma}}\theta^{\underline{\sigma}'}=0
\quad\text{whenever}\quad
\underline{\sigma}\ne \underline{\sigma}'.
\]
This shows that the \(C_{\underline{\sigma}}\) induce a direct sum decomposition of \(C^+(\Lambda)\).

For the assertion on $\dim C_{\underline{\sigma}}$, let us write \(C_+\) for the \(C_{\underline{\sigma}}\) where \(\sigma_i=+\) for all \(1\le i\le j\). Fix \(\underline{\sigma} = (\sigma_1, \dots, \sigma_j) \in \{\pm\}^j\). We define an automorphism \(\alpha: \Lambda \to \Lambda\) by specifying its action on the basis \(\{\delta_1, \dots, \delta_{n+2}\}\). Let \(\alpha(\delta_i) = \sigma^*_i \delta_i\), where the signs \(\sigma^*_i\) are defined as follows:
\[
\sigma^*_i = \sigma^*_{n+3-i} = \sigma_i \text{ for } 1 \le i \le j, \text{ and } \sigma^*_i = 1 \text{ otherwise.} 
\]
This \(\alpha\) preserves the quadratic form \(q\) because it preserves each hyperbolic pair product \(x_i x_{n+3-i}\) (since \(\sigma_i \cdot \sigma_i = 1\)) and leaves the central term fixed in the odd case. Thus, \(\alpha\) induces an isometry of quadratic spaces \((\Lambda, q) \xrightarrow{\cong} (\Lambda, q)\) and an automorphism \(
C(\Lambda)\to C(\Lambda),
\) of the associated Clifford algebra \(C(\Lambda)\), which we still denote by \(\alpha\). Clearly, \(\alpha\) preserves \(C^+(\Lambda)\) and we have
\(
\alpha(\theta_1^+\cdots\theta_j^+)=\theta^{\underline{\sigma}}.
\)
Hence \(C_+\) and \(C_{\underline{\sigma}}\) have the same dimension, namely
\(
\frac{2^{n+1}}{2^j}=2^{n+1-j}.
\)
\end{proof}

As a corollary of Theorem \ref{Thm:Prank}, we have the following (cf. the diagram in \S~\ref{S:DiagOrthog}):
	\begin{corollary}\label{Cor:EONewton}
For each \(n \ge 1\), let \(\Sh_{\GSp(C^+(\Lambda)), \Fpbar}^{f=0}\) and
\(\Sh_{\GSp(C^+(\Lambda)), \Fpbar}^{\mathrm{ss}}\) denote the \(p\)-rank-zero locus
and the supersingular locus of \(\Sh_{\GSp(C^+(\Lambda)), \Fpbar}\), respectively.
Then
\begin{equation}\label{Eq:BasicLoc}
  \Sh_{\GSpin(\Lambda), \Fpbar}^{\mathrm{basic}}
=
\iota_{\KS}^{-1}\bigl(\Sh_{\GSp(C^+(\Lambda)), \Fpbar}^{f=0}\bigr)
=
\iota_{\KS}^{-1}\bigl(\Sh_{\GSp(C^+(\Lambda)), \Fpbar}^{\mathrm{ss}}\bigr).  
\end{equation}
Each nonbasic Newton stratum of \(\Sh_{\GSpin(\Lambda),\Fpbar}\) is a single EO
stratum, whereas the basic locus is a union of EO strata. In particular, all
nonbasic EO strata have pairwise distinct positive \(p\)-ranks, with one
exception: if \(n \ge 2\) is even and \(\SO(\Lambda)_{\Qp}\) is split, then the
two EO strata of dimension \(m-1\) have the same \(p\)-rank, namely \(2^{m-1}\).
\end{corollary}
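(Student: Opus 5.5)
The plan is to combine Theorem~\ref{Thm:Prank} with the combinatorics of the EO and Newton posets recalled in Theorem~\ref{Thm:EOAbel} and Remark~\ref{Rmk:ParOrderNewton}. The key steps are as follows.

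\textbf{Step 1 (the basic locus).} For \eqref{Eq:BasicLoc}, first note that the $\mathrm{KS}$-image of $\nu_{\mathrm{basic}}$ is the central cocharacter $t\mapsto t^{1/2}$. This is the supersingular Newton cocharacter of $\GSp(C^+(\Lambda))$, so the basic locus maps into the supersingular locus, and hence into the $p$-rank-zero locus.

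Conversely, suppose a point is not basic. Then its Newton class is some $[\nu_j]$ or $[\nu'_m]$. By Theorem~\ref{Thm:Prank}(2),(3) its Kuga--Satake image has $p$-rank $2^{n+1-j}>0$, or $2^{m-1}>0$. So a point with $p$-rank zero, or a supersingular point, must be basic.

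This uses functoriality of $\mathsf{Nt}$ together with the fact that in the Siegel case the Newton stratum determines the $p$-rank. It gives both equalities, since supersingular points have $p$-rank $0$.

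\textbf{Step 2 (nonbasic strata are single EO strata).} The plan is a counting and dimension argument. Nonbasic Newton strata are unions of EO strata, because the EO stratification refines the $p$-rank stratification. Indeed, the $p$-rank is constant on EO strata by the Kuga--Satake embedding, since $p$-rank is an invariant of the $p$-torsion. By Step 1, the $p$-ranks $2^{n+1-j}$ already separate the different $j$.

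Now compare dimensions. By purity, the stratum $[\nu_j]$ has dimension $n+1-j$ (Remark~\ref{Rmk:ParOrderNewton}). The $\mu$-ordinary stratum $[\nu_1]$ coincides with the open EO stratum $w_n$.

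I would then argue by descending induction on dimension, starting from the top. The EO stratum of dimension $n+1-j$ is unique when $n+1-j\neq m-1$ in the even case, and unique always when $n$ is odd. Its $p$-rank must be constant. The closure relations of Theorem~\ref{Thm:EOAbel} are a chain, and $p$-rank is lower semicontinuous. Together these show that the EO strata of dimensions $n,n-1,\dots$ meet the Newton strata $[\nu_1],[\nu_2],\dots$ in order.

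Concretely, I would check that the number of nonbasic Newton strata equals the number of EO strata of dimension $\ge n+1-m$, respectively $\ge m-1$ or $\ge m$ in the even cases. Each nonbasic Newton stratum is irreducible-dimensional and equidimensional, so it must contain an EO stratum of its own dimension. Since EO strata are disjoint and the counts match, each nonbasic Newton stratum is exactly one EO stratum.

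In the even split case, $b_m$ and $b'_m$ both have dimension $m-1$ and are incomparable, matching $w_{m-1}$ and $w^{\dagger}_{m-1}$. In the even nonsplit case, both $w_{m-1}$ and $w^\dagger_{m-1}$ fall into the basic locus, which agrees with the diagram in \S~\ref{S:DiagOrthog}. The remaining EO strata, of lower dimension, then make up the basic locus.

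\textbf{Step 3 (the $p$-rank statement).} The final assertion, that nonbasic EO strata have pairwise distinct positive $p$-ranks except for the two strata of dimension $m-1$ in the even split case, is read off from Theorem~\ref{Thm:Prank}(2),(3).

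\textbf{Expected obstacle.} The main obstacle is Step 2: making the matching rigorous. I would first try the counting argument above, using that each nonbasic Newton stratum is nonempty of pure dimension $n+1-j$ and is a union of EO strata of dimension at most $n+1-j$. Its top-dimensional EO constituent therefore has dimension exactly $n+1-j$. Injectivity then follows from distinct $p$-ranks, and surjectivity from the counts.

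The delicate point is excluding extra lower-dimensional EO strata inside a nonbasic Newton stratum. For this I would use that the total number of EO strata equals $\#\{\text{nonbasic Newton strata}\}$ plus the number of strata lying in the basic locus. The latter is known from the diagram in \S~\ref{S:DiagOrthog}, or from the fact that the basic locus has dimension $\lfloor n/2\rfloor$, which would need to be cited.
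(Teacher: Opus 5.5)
Your Step~1 and Step~3 are fine and agree with the paper. The gap is in Step~2. It sits where you suspect, but it is wider than you indicate.

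\textbf{What the $p$-rank argument gives and where it stops.} Constancy of the $p$-rank on EO strata (via Kuga--Satake) shows that each $p$-rank level set is a union of EO strata. Hence the basic locus, and each $[\nu_j]$ whose $p$-rank is not shared with another Newton stratum, is a union of EO strata. Two things are missing.
\begin{enumerate}
\item In the even split case, $[\nu_m]$ and $[\nu'_m]$ have the same $p$-rank $2^{m-1}$. You therefore only learn that $[\nu_m]\cup[\nu'_m]$ is a union of EO strata. Nothing prevents $w_{m-1}$, which need not be irreducible, from meeting both, and the incomparability of $b_m,b'_m$ does not help.
\item Nothing excludes an EO stratum of dimension $<d$ from lying in a nonbasic Newton stratum. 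For $n$ odd, $w_{m-1}\subseteq\overline{w_m}$ could a priori lie in $[\nu_m]$ without contradicting purity, equidimensionality, or semicontinuity of the $p$-rank. In that case your count gives no conclusion.
\end{enumerate}

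\textbf{Why your proposed repairs do not close this.} The diagram in \S~\ref{S:DiagOrthog} records the conclusion of this very corollary, so using it is circular. The basic locus does not have dimension $\lfloor n/2\rfloor$ in the even split case; it has dimension $n/2-1$. Even the correct dimension of the basic locus does not decide, in the even nonsplit case, whether both $w_{m-1}$ and $w^{\dagger}_{m-1}$ are basic or only one of them.

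\textbf{The missing ingredient.} The paper uses that $\Sh_{\SO(\Lambda),\Fpbar}$ is fully Hodge--Newton decomposable \cite{GHNHodgeNewton}, \cite[Cor.~7.3.4]{SZ22}. This gives two facts directly.
\begin{enumerate}
\item The Newton point is constant on EO strata. So each Newton stratum, including $[\nu_m]$ and $[\nu'_m]$ separately, is a union of EO strata.
\item Every EO stratum contained in a nonbasic Newton stratum has the same dimension as that Newton stratum.
\end{enumerate}
With these two facts, your count $\#\{\text{EO strata of dimension}\ge d\}=\#\{\text{nonbasic Newton strata}\}$ finishes the proof. The $p$-rank detour is then no longer needed for the ``union of EO strata'' part.
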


\begin{proof}
By Theorem~\ref{Thm:Prank}, the basic locus is mapped into
\(\Sh_{\GSp(C^+(\Lambda)), \Fpbar}^{\mathrm{ss}}\), whereas every nonbasic Newton
stratum has positive \(p\)-rank. Hence \eqref{Eq:BasicLoc} holds.
For the second assertion, we use that \(\Sh_{\SO(\Lambda), \Fpbar}\) is fully
Hodge--Newton decomposable~\cite{GHNHodgeNewton}; see \cite[Cor.~7.3.4]{SZ22}. Since the EO stratification on \(\Sh_{\GSpin(\Lambda), \Fpbar}\) is the pullback of that on
\(\Sh_{\SO(\Lambda), \Fpbar}\), it follows that the Newton stratification on
\(\Sh_{\GSpin(\Lambda), \Fpbar}\) is coarser than the EO stratification, and that
each nonbasic Newton stratum is a union of EO strata, all of the same dimension
as that Newton stratum. Let \(d\) be the minimal dimension of a nonbasic Newton stratum. Then a direct
counting, using the descriptions of EO strata in Theorem~\ref{Thm:EOAbel} and of
Newton strata in \S\ref{Rmk:ParOrderNewton}, shows that
\[
\#\{\text{EO strata of dimension }\ge d\}
=
\#\{\text{nonbasic Newton strata}\}.
\]
Hence each nonbasic Newton stratum is a single EO stratum. The final statement
on \(p\)-ranks then follows from Theorem~\ref{Thm:Prank}.
\end{proof}

		\section{Examples in small rank}\label{S:Examples}

In this section, we present examples of Theorem~\ref{Thm:MainThmOrthog} in low-rank cases, in which the \(\GSpin\) Shimura varieties admit moduli interpretations.

\subsection{The case \(n=1\)} This is analogous to the 0-dimensional Kuga-Satake embedding in \S~\ref{S:ZerodimKS}.
In this case, \(\Sh_{\GSpin(\Lambda), \Fpbar}\) is a Shimura curve, while \(\Sh_{\GSpin(\Lambda'), \Fpbar}\) is a zero-dimensional toric Shimura variety defined over \(\Fp\) or \(\Fpsq\). The image of the embedding \(\Sh_{\GSpin(\Lambda'), \Fpbar}\to \Sh_{\GSpin(\Lambda), \Fpbar}\) consists of finitely many CM points. They lie in the ordinary locus (resp.~the superspecial locus) if \(\SO(\Lambda')_{\Fp}\) is split (resp.~nonsplit) over \(\Fp\). 

\subsection{The case \(n=2\)} \label{S:ExampHilbert}
Let \(F\) be the center of \(C^+(V)\). Then \(C^+(V)=C^+(V')\otimes_\Q F\), and \(F\) is either \(\Q\oplus\Q\) or a real quadratic field. The subalgebra \(C^+(V')\subseteq C^+(V)\) is a semisimple \(\Q\)-algebra of dimension \(4\), hence \(C^+(V')\cong B\) for some quaternion algebra \(B\) over \(\Q\). Moreover, \(\GSpin(V')=C^+(V')^\times\).

Now assume that \(B=M_2(\Q)\). Then we have
\[
\GSpin(V')=\GL_{2,\Q}\quad\text{and}\quad
\GSpin(V)=\bigl\{ g\in \mathrm{Res}_{F/\Q}\GL_2 \mid \det(g)\in \mathbb G_m \bigr\}.
\]
Thus \(\Sh_{\GSpin(V')}\) is the classical modular curve, while \(\Sh_{\GSpin(V)}\) is the Hilbert modular surface parametrizing abelian surfaces with \(O_F\)-action. Moreover, the embedding \(\Sh_{\GSpin(V')}\to \Sh_{\GSpin(V)}\) is given on the moduli side by sending an elliptic curve \(E\) to the Serre tensor product \(E\otimes_\Z O_F\), which comes tautologically with an \(O_F\)-action. Note that \(E\otimes_\Z O_F\cong E\times E\) noncanonically.

Now we assume that there exists a self-dual lattice \(\Lambda\subset V\) such that \(\Lambda':=V'\cap \Lambda\) is again self-dual for $V'$. At a prime \(v\mid p\), the corresponding Shimura data admit smooth integral models with hyperspecial level at \(p\), and the inclusion of Shimura varieties extends to a morphism of integral models over \(F_{(v)}\). Passing to geometric special fibers yields a natural embedding
\[
\Sh_{\GSpin(\Lambda'),\overline{\F}_p}\longrightarrow
\Sh_{\GSpin(\Lambda),\overline{\F}_p}.
\]
On the level of moduli, it is again given by \(E\mapsto E\otimes_\Z O_F\cong E\times E\). It follows that the ordinary (resp.~supersingular) locus of \(\Sh_{\GSpin(\Lambda'), \Fpbar}\) is mapped into the ordinary (resp.~superspecial) locus of the Hilbert modular surface \(\Sh_{\GSpin(\Lambda), \Fpbar}\); see below for a discussion of the EO strata of \(\Sh_{\GSpin(\Lambda), \Fpbar}\), which also prepares for the next subsection.

\subsection*{EO strata of \(\Sh_{\GSpin(\Lambda), \Fpbar}\)}
We need to distinguish the cases where \(p\) is split or inert in \(F\). In either case, there are four EO strata on \(\Sh_{\GSpin(\Lambda), \Fpbar}\): the \(0\)-dimensional superspecial locus, two distinct \(1\)-dimensional strata, and the open dense $\mu$-ordinary locus. Clearly, the superspecial locus has discrete invariants \((f,a)=(0,2)\), while the ordinary locus has \((f,a)=(2,0)\), where \(f\) and \(a\) denote the \(p\)-rank and \(a\)-number, respectively. For the two middle strata, if \(p\) is inert in \(F\), then \((f,a)=(0,1)\), while if \(p\) is split in \(F\), then \((f,a)=(1,1)\).

For the inert case, we refer to \cite{GO00}. The split case is elementary. Let \(A\) be an abelian surface over \(\Fpbar\) with \(O_F\)-action. Since \(O_F\otimes_\Z \Z_p\cong \Z_p\times \Z_p\), the induced action on the \(p\)-divisible group $A[p^\infty]$ yields a decomposition \(A[p^\infty]\cong G_1\times G_2\), where each \(G_i\) has dimension \(1\) and height \(2\). Thus, up to isomorphism, there are only four possibilities,
\[
E^{\mathrm{ord}}\times E^{\mathrm{ord}}, \quad
E^{\mathrm{ord}}\times E^{\mathrm{ss}}, \quad
E^{\mathrm{ss}}\times E^{\mathrm{ord}}, \quad
E^{\mathrm{ss}}\times E^{\mathrm{ss}},
\]
where the middle two are not identified because of the \(\Z_p\times \Z_p\)-action. This proves the above description of the EO strata.

\subsection{The case \(n=3\)}
Following \cite{KR00}, let \(B\) be an indefinite quaternion algebra over $\Q$, and put \(C:=M_2(B)\). Let \(x\mapsto x'\) be the involution on \(C\) induced by the main involution on \(B\), and define
\[
V:=\{\,y\in C: y'=y,\ \mathrm{Trd}(y)=0\,\}.
\]
The rule \(y^2=q(y)\cdot I_2\) defines a quadratic form \(q\) on \(V\), and \((V,q)\) has signature \((3,2)\). Moreover, the inclusion \(V\hookrightarrow C\) satisfies the Clifford relations, hence induces an isomorphism
\(
C^+(V)\xrightarrow{\sim} C=M_2(B).
\)
Accordingly, \(\GSpin(V)\) is a twisted form of \(\GSp_4\), and the associated Shimura variety \(\Sh_{\GSpin(V)}\) is a twisted Siegel threefold; when \(B=M_2(\Q)\), it is the usual Siegel modular threefold.

 Since \(\dim V'=4\) is even, the even Clifford algebra \(C^+(V')\) has quadratic \'etale center \(F:=Z(C^+(V'))\), which is necessarily either \(\Q\oplus\Q\) or a real quadratic field, since \(x\) is positive. The algebra \(F\) embeds into \(C^+(V)\cong M_2(B)\) as a noncentral quadratic subalgebra, and one has
\[
C^+(V')=\mathrm{Cent}_{C^+(V)}(F)=\mathrm{Cent}_{M_2(B)}(F)\cong B\otimes_\Q F.
\]
Thus \(\Sh_{\GSpin(V')}\) is a twisted Hilbert--Blumenthal surface. If \(F=\Q\oplus\Q\), it becomes a product of two modular curves, while if \(F\) is a real quadratic field and \(B=M_2(\Q)\), it is the usual Hilbert modular surface attached to \(F\).

From now on, assume that \(B=M_2(\Q)\) and that \(F\) is a real quadratic field. Then the embedding \(\Sh_{\GSpin(V')}\hookrightarrow \Sh_{\GSpin(V)}\) is the classical embedding of a Hilbert modular surface into the Siegel threefold, given on the PEL moduli interpretation by forgetting the \(O_F\)-action. Under the geometric mod \(p\) embedding
\(
\Sh_{\GSpin(\Lambda'), \Fpbar}\to \Sh_{\GSpin(\Lambda), \Fpbar},
\)
the superspecial (resp.~$\mu$-ordinary) locus clearly maps to the superspecial (resp.~$\mu$-ordinary) locus, while the two \(1\)-dimensional EO strata both map, in the split (resp.~inert) case, to the \(2\)-dimensional (resp.~\(1\)-dimensional) EO stratum of the Siegel threefold \(\Sh_{\GSpin(\Lambda), \Fpbar}\), which has \((f,a)=(1,1)\) (resp.~\((f,a)=(0,1)\)). In particular, in the split case the image is not contained in the supersingular locus, whereas in the inert case it is contained in the supersingular locus.
		
		\section{RSZ Unitary Shimura varieties}\label{S:RSZVar}
		In this section, we consider Rapoport-Smithling-Zhang (RSZ) unitary Shimura varieties over $\mathbb{Q}$ of signature $(n,1)$ and natural embeddings between them, mainly following \cite[\S2]{RSZDiagCycle} and \cite{RSZUnitary}.

		\subsection{Unitary  groups}\label{S:ManyUnitGp}
        Let $\kk$ be an imaginary quadratic field with the unique nontrivial automorphism $a \mapsto \overline{a}$. Let $W$ be a finite-dimensional $\kk$-vector space equipped with a Hermitian form $\Psi: W \times W \to \kk$ of signature $(n,1)$ (i.e., $\Psi$ is positive definite on an $n$-dimensional subspace and negative definite on a $1$-dimensional subspace). We assume that $p$ is unramified in $\kk$. Fix a square root $\delta = \sqrt{-D}$ of the discriminant $-D$ of $\kk$. Moreover, we set $ G = \mathrm{U}(W)$, $G^{\Q}= \GU(W)$, and $\nu: G^\mathbb{Q} \to \mathbb{G}_{m, \mathbb{Q}}$ the similitude character of \( G^\mathbb{Q} \), which has kernel \( G \subseteq G^\mathbb{Q} \). Moreover, we set 
		\( 
		Z^\mathbb{Q} = \text{Res}_{\kk/\mathbb{Q}} \mathbb{G}_{m, \kk},
		\)
        and also use $\nu: Z^\mathbb{\Q}\to \mathbb{G}_{m, \mathbb{Q}}$ to denote its similitude character; on \( \mathbb{Q} \)-valued points it is given by 
		\(
		a \mapsto a\bar{a}, a \in \kk^\times.
		\)
		Define a subgroup \( \tilde{G} \subseteq Z^\mathbb{Q} \times G^\mathbb{Q} \) consisting of elements \( (z, g) \) such that \( \nu(z) = \nu(g) \). Clearly, the canonical projection $\tilde{G}\to G^{\mathbb{Q}}$ is a central isogeny with kernel $Z=\mathbb{G}_{m, 
        \Q}$. Moreover, we have a decomposition:
		\begin{align}\label{tilGDecomp}
			\tilde{G} = Z^{\mathbb{Q}} \times G, \quad (z, g) \mapsto (z, z^{-1}g).
		\end{align}

		\subsection{Unitary Shimura varieties}
        Let
		\(
		h_{G^{\mathbb{Q}}}: \mathbb{S} \to G_{\mathbb{R}}^{\mathbb{Q}}\),  \( h_{Z^{\mathbb{Q}}}: \mathbb{S} \to Z_{\mathbb{R}}^{\mathbb{Q}} \), and $h_{\tilde{G}}= (h_{G^{\Q}}, h_{Z^{\Q}}): \mathbb{S}\to \tilde{G}_{\R}$ be as in \cite[\S3.1]{RSZDiagCycle}, and 
        $X(h_{G^{\mathbb{Q}}}) $ and $X(h_{Z^{\mathbb{Q}}})$, $X(h_{\tilde{G}})$ their associated Hermitian symmetric domain, so that we obtain the following Shimura data,
        \( (G^{\mathbb{Q}}, X(h_{G^{\Q}}))\),  \((Z^{\Q}, X(h_{Z^{\Q}}))\), and \((\tilde{G}, X(h_{\tilde{G}}))\).
		Set $h_{G}$ to be the composition of $h_{\tilde{G}}$ with the canonical projection $\tilde{G}\to G$ defined in \eqref{tilGDecomp}. Then we obtain another Shimura datum $(G, X(h_{G}))$. Moreover, the natural homomorphisms $\tilde{G}\to G^{\Q}$, $\tilde{G}\to G$  induce the morphisms between Shimura data,
        \[
         (G^{\Q}, X(h_{G^{\Q}})) \to (\tilde{G}, X(h_{\tilde{G}})),  \, (\tilde{G}, X(h_{\tilde{G}}))\to (G, X(h_G)), \, (\tilde{G}, X(h_{\tilde{G}}))\to (G\times Z^{\Q}, X(h_{G})\times X(h_{Z^{\Q}})). 
        \]
	     Note that the \( \tilde{G} \)-Shimura data above all have reflex field $\kk$,
		while the \( G^{\mathbb{Q}} \)-Shimura data have reflex field $\kk$ for all \( n \)
		except when \( n = 1 \). In the latter case, the \( G^{\mathbb{Q}} \)-Shimura data
		have reflex field \( \mathbb{Q} \).

        To form Shimura varieties from the Shimura data above, we need one more ingredient: level subgroups. For this we let $\mathsf{K}_{Z^{\Q}}$ be the maximal open compact subgroup of $Z^{\Q}(\Af)$, fix a lattice $\Lambda$ for \( W \), and define
        \(
        \mathsf{K}_{G}= \GU(\Lambda)(\hat{\Z}). 
        \)
        Then $\mathsf{K}_{\tilde{G}}:= \mathsf{K}_G\times \mathsf{K}_{Z^{\Q}}$ forms an open compact subgroup of $\tilde{G}(\Af)=G(\Af)\times Z^{\Q}(\Af)$. Now we can form the Shimura variety
		\[
		\mathrm{Sh}_{G^{\mathbb{Q}}} = \mathrm{Sh}_{\mathsf{K}_{G^{\mathbb{Q}}}}=\mathrm{Sh}_{\mathsf{K}_{G^{\mathbb{Q}}}}\bigl(G^{\mathbb{Q}}, X(h_{G^{\Q}})\bigr),
		\]
		such that its \( \mathbb{C} \)-points are identified with the orbifold quotient  
		\( G^{\mathbb{Q}}(\mathbb{Q}) \backslash (\tilde{G}(\mathbb{A}_f) \times  X(h_{G^{\Q}}) / \mathsf{K}_{G^{\mathbb{Q}}}.
		\)
        Similarly, we have $\Sh_{G}=\Sh_{G, \mathsf{K}_{G}}$ and $\Sh_{\tilde{G}}=\Sh_{\tilde{G}, \mathsf{K}_{\tilde{G}}}$. They are quasi-projective smooth varieties defined over their reflex field $\kk$ (or $\Q$ for $\Sh_{G^{\Q}}$ when $n=1$), all of dimension $n$. There are the following natural morphisms between these varieties,
        \[
        \Sh_{\tilde{G}} \to \Sh_{G^{\Q}}, \quad \Sh_{\tilde{G}} \to \Sh_{G}, \quad \mathrm{Sh}_{\tilde{G}}\hookrightarrow \mathrm{Sh}_{Z^{\mathbb{Q}}}\times \mathrm{Sh}_{G^{\mathbb{Q}}}. 
        \]
        
		\subsection{Embedding of unitary Shimura varieties}
      Let \(u\in W\) be a fixed positive vector, and set \(W_0=\kk\cdot u\) and
\(W^\flat=W_0^\perp\). Then \((W^\flat,\Psi|_{W^\flat})\) is a Hermitian space
of signature \((n-1,1)\). Choose a self-dual lattice $\Lambda^\flat$ for $W^\flat$.
Define $H=\mathrm{U}(\Lambda^\flat)$, $H^\Q=\GU(\Lambda^\flat)$, and $\tilde H$ as for $\tilde G$. Choose the level subgroups $\mathsf{K}_{H^{\Q}}, \mathsf{K}_{\tilde{H}}$ and $\mathsf{K}_{H}$ as for $\mathsf{K}_{G^{\Q}}, \mathsf{K}_{\tilde{G}}$ and $\mathsf{K}_{G}$.
The inclusion \(W^\flat\subseteq W\) induces natural embeddings
\(H\hookrightarrow G\) and \(\tilde H\hookrightarrow \tilde G\) of
\(\Q\)-algebraic groups, given by
\(h\mapsto \diag(h,1)\) and \((z,h)\mapsto (z,\diag(h,z))\), respectively.
These embeddings are compatible with the canonical projections
\(\tilde G\to G\) and \(\tilde H\to H\) defined in \eqref{tilGDecomp}\footnote{Note, however, that the embedding \(H\hookrightarrow G\) does not extend to an
embedding \(H^{\Q}\hookrightarrow G^{\Q}\).}. Thus if we fix a \( O_\kk \)-lattice \( \Lambda_0 \) of \( W_0 \) and set $\Lambda=\Lambda_0 \oplus \Lambda^{\flat} $, we obtain natural morphisms of Shimura varieties,
        \(
        \Sh_{H^{\Q}} \to \Sh_{G^{\Q}}\), \(\Sh_{\tilde{H}}\to \Sh_{\tilde{G}}\),  and \(\Sh_{H}\to \Sh_{G}\),
        fitting into the (commutative) diagram between morphisms of varieties over $\kk$,
        \begin{equation}\label{Eq:DiagShiUnit}
			\xymatrix{
				\mathrm{Sh}_{\tilde{H}} \ar[r] \ar[d] & \mathrm{Sh}_{\tilde{G}} \ar[d] \\ 
				\mathrm{Sh}_{H^{\mathbb{Q}}} & \mathrm{Sh}_{G^{\mathbb{Q}}},
			}
			\xymatrix{
				\mathrm{Sh}_{\tilde{H}} \ar[r] \ar[d] & \mathrm{Sh}_{\tilde{G}} \ar[d] \\ 
				\mathrm{Sh}_{H} \ar[r] & \mathrm{Sh}_{G}.
			}
		\end{equation}

		\begin{remark}
		    The Shimura varieties \(\Sh_{\tilde{G}}\) and \(\Sh_{\tilde{H}}\) were introduced by Rapoport--Smithling--Zhang in \cite{RSZDiagCycle} and are commonly referred to as the RSZ Shimura varieties. Like \(\Sh_{G^{\Q}}\) and \(\Sh_{H^{\Q}}\), they are of PEL type and hence admit moduli interpretations (see below), whereas \(\Sh_G\) and \(\Sh_H\) are in general only of abelian type. Moreover, the embedding \(\Sh_{\tilde{H}}\to \Sh_{\tilde{G}}\), covering \(\Sh_H\to \Sh_G\), admits a transparent moduli interpretation and can therefore sometimes be used to reduce questions downstairs to questions upstairs.
		\end{remark}
		
		\subsection{The Moduli Stack \( \mathcal{M}(n, r) \)}
		
		Let \( n, r \geq 0 \) be an integer. The moduli stack \( \mathcal{M}(n, r) \) over $\kk$ associates to each locally noetherian $\kk$-scheme \( S \) the triples \( (A, \iota, \lambda) \), where \( A \) is an abelian scheme over \( S \) of relative dimension \( n + r \), \( \iota: O_\kk \to \mathrm{End}_S(A) \) is an action of \( O_\kk \),
			\( \lambda: A \to A^t \) is a principal polarization such that for all \( a \in O_\kk \), \( \iota(a)^\dagger = \iota(\bar{a}) \), where \( (\cdot)^\dagger \) denotes the Rosati involution with respect to \( \lambda \). Additionally, the triple \( (A, \lambda, \iota) \) must satisfy Kottwitz's signature condition:
		    \[
		      \mathrm{char}(T, \iota(a)|_{\mathrm{Lie}(A)}) = (T - a)^n (T - \bar{a})^r, \quad a \in O_\kk.
		      \]
        
			The moduli stack \( \mathcal{M}(n, r) \) is a Deligne-Mumford stack over \( O_\kk \), and it is smooth of dimension \( nr \) over \( O_\kk[D^{-1}] \). In the case \( n = 1 \) and \( r = 0 \), the stack \( \mathcal{M}(1, 0) \) is \'etale over \( O_\kk \); see, for example, \cite[Prop.~2.1]{KudlaRapoport2014}. Moreover, $\Sh_{G^{\Q}}$ is represented by an open and closed subscheme of $\mathcal{M}(n, 1)\otimes k$. It represents the functor that associates each \(\kk\)-scheme \( S \) with the quadruple \( (A, \iota, \lambda, \bar{\eta}) \), where \( (A, \iota, \lambda) \in \mathcal{M}(n, 1)(S) \), and \( \bar{\eta} \) is a \(\mathsf{K}_{G^{\Q}}\)-level structure, i.e., a \(\mathsf{K}_{G^{\mathbb{Q}}}\)-orbit of $\mathbb{A}_{\kk,f}$-isomorphism \( \hat{V}(A) \cong W \otimes \mathbb{A}_f \), with \( \hat{V}(A) \) the rational Tate module of \( A \), compatible with the symplectic structure on both sides; see \cite[Prop.~4.4]{KudlaRapoport2014} for details. In particular, we know that the Shimura varieties $\Sh_{G^{\Q}}$ and $\Sh_{Z^{\Q}}$ admit integral models over $O_{\kk}$, which we denote by $\mathcal{S}h_{G^{\Q}}$ and $\mathcal{S}h_{Z^{\Q}}$ respectively. 
	
		\subsection{$p$-integral models}
		Fix a prime \( v \) of $\kk$ lying above \( p \), and denote by $O_{\kk, (v)}$ the localization at $v$ of $O_\kk$.
		\begin{theorem}[{\cite[Thm.~4.1, Thm.~5.2]{RSZDiagCycle}}]
			The Shimura variety $\mathrm{Sh}_{\tilde{G}}$ admits an integral model over $O_{\kk}$, denoted by $\mathcal{S}h_{\tilde{G}}$, which is an open and closed substack of $\mathcal{S}h_{Z^\mathbb{Q}}\times_{O_\kk}\mathcal{S}h_{G^\mathbb{Q}}$, and thus an open and closed substack of $\mathcal{M}(1,0)\times_{O_\kk}\mathcal{M}(n, 1)$. The base change $\mathcal{S}h_{\tilde{G}, O_{\kk, (v)}}$ represents the functor that associates to each locally noetherian $S$ the tuple $(A_0, \iota_0, \lambda_0, A, \iota, \lambda, \bar{\eta})$, where $(A_0, \iota_0, \lambda_0)\in \mathcal{M}(1, 0)(S)$, $(A, \iota, \lambda)\in \mathcal{M}(n, 1)(S)$, and $\bar{\eta}$ is a $\mathsf{K}_{\tilde{G}}$-orbit of $\mathbb{A}_{\kk,f}$-linear isometries:
			\[
			\eta^p: \hat{V}^p(A_0, A) \cong -W \otimes_\kk \mathbb{A}_{\kk, f}^p, \text{ with } \hat{V}(A_0, A) := \mathrm{Hom}_\kk(\hat{V}^p(A_0), \hat{V}^p(A)).
			\]
		\end{theorem}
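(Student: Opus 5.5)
The plan is to deduce the theorem from the general construction of RSZ integral models in \cite{RSZDiagCycle}, without reproving that construction. We work $v$-adically throughout: $p$ is unramified in $\kk$, and $\mathsf{K}_{\tilde G}$ is hyperspecial at $p$ because $\Lambda$ is self-dual at $p$.

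\textbf{Step 1: define the model.} Recall that $\mathcal{M}(1,0)$ is finite étale over $O_\kk$, and that $\mathcal{M}(n,1)$ is smooth over $O_\kk[D^{-1}]$, in particular over $O_{\kk,(v)}$. We define $\mathcal{S}h_{\tilde G}$ as the moduli problem of tuples $(A_0,\iota_0,\lambda_0,A,\iota,\lambda,\bar\eta)$ with prime-to-$p$ level structure. The generic fibre should be $\Sh_{\tilde G}$, and we check this through the decomposition \eqref{tilGDecomp}. A $\kk$-point of $\Sh_{\tilde G}$ is a pair consisting of a CM elliptic-type object $A_0$ and an object $A$. The Hermitian space $\Hom_\kk(V(A_0),V(A))$ then has signature $(n,1)$ twisted by $-1$, which is why the sign appears in $-W$. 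The level structure $\eta^p$ carries exactly the data of a $\tilde G(\mathbb{A}_f^p)$-orbit. The Hasse principle for Hermitian spaces, together with the constraint $\nu(z)=\nu(g)$, shows that the relevant isometry class is picked out by an open and closed condition. This is the content of the cited Thm.~4.1 and Thm.~5.2.

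\textbf{Step 2: open and closed inside the product.} Consider the forgetful map $\mathcal{S}h_{\tilde G}\to \mathcal{S}h_{Z^\Q}\times_{O_\kk}\mathcal{S}h_{G^\Q}$. Since $Z^\Q\times G^\Q\supseteq \tilde G$ has quotient $\Gm$ via $\nu(z)\nu(g)^{-1}$, the image is cut out by requiring the two similitude factors to match. On the integral model this condition is locally constant, because it is a condition on the $\ell$-adic Tate module pairings, which are étale-locally constant. Hence the image is open and closed, and the map is a monomorphism once the levels are neat. Composing with the inclusions into $\mathcal{M}(1,0)\times\mathcal{M}(n,1)$ gives the second inclusion.

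\textbf{Step 3: smoothness and canonicity.} Smoothness over $O_{\kk,(v)}$ follows from the product structure: the base change of a smooth scheme along an étale one is smooth. Canonicity, meaning the extension property, is inherited from Kisin's canonical models. The reason is that $\tilde G$ is of Hodge type via the PEL embedding, and an open and closed piece of the product is still a normal model satisfying the extension property.

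\textbf{Main obstacle.} The hardest step is the generic-fibre identification in Step 1, specifically matching the Hermitian-space twist $-W$ with the sign conventions for $h_{\tilde G}$. For this we would follow \cite[\S3--4]{RSZDiagCycle} directly rather than recompute it.
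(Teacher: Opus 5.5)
Your proposal takes the same route as the paper. The paper gives no argument of its own for this statement and simply imports it from \cite[Thm.~4.1, Thm.~5.2]{RSZDiagCycle}, which is where your Step~1 places the real content. Your Steps~2--3 are optional heuristics: the canonicity claim is not part of the statement, and your Step~2 uses only prime-to-$p$ $\ell$-adic data, so it covers only the model over $O_{\kk,(v)}$. The open-and-closed assertion over all of $O_{\kk}$ therefore rests, as in the paper, on the cited global result.
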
 
		
		The finite morphism $\mathrm{Sh}_{\tilde{H}} \to \mathrm{Sh}_{\tilde{G}}$ extends to a finite morphism of $O_\kk$-stacks, \(\mathcal{S}h_{\tilde{H}}  \to \mathcal{S}h_{\tilde{G}}\), given by, 
			\begin{equation}
			    (A_0, \iota_0, \lambda_0, A^{\flat}, \iota^{\flat}, \lambda, \bar{\eta}^{\flat}) 
		\mapsto \bigl(A_0, \iota_0, \lambda_0, A^{\flat}\times A_0, \iota^{\flat}\times \iota_0, \lambda^{\flat}\times \lambda_0(u), \bar{\eta}\bigr). \label{ModiofEmbedH2G}
			\end{equation}
	For unspecified notations such as $\lambda_0(u)$ and $\bar{\eta}$, we refer to \cite[P.~1762]{RSZDiagCycle}. These will not be used explicitly in this paper.
		
		For the Shimura varieties $\mathrm{Sh}_H$ and $\mathrm{Sh}_G$, which are of abelian type rather than PEL type, we lack knowledge of their $O_\kk$-integral models. However, Kisin's construction \cite{CIMK} ensures that they admit $p$-integral models over $O_{\kk, (v)}$, denoted $\mathcal{S}h_{H, O_{\kk,(v)}}$ and $\mathcal{S}h_{G, O_{\kk,(v)}}$, respectively. Moreover, the finite morphism $\mathrm{Sh}_H \to \mathrm{Sh}_G$ extends to a finite morphism  
		\[
		\mathcal{S}h_{H, O_{\kk, (v)}} \to \mathcal{S}h_{G, O_{\kk, (v)}}.
		\]
		We obtain the following commutative diagrams of morphisms between integral models  of Shimura varieties (over $O_\kk$ and $O_{\kk, (v)}$ respectively), extending \eqref{Eq:DiagShiUnit}:
		\begin{equation}\label{Eq:DiagShiUnitInt}
			\xymatrix{
				\mathcal{S}h_{\tilde{H}} \ar[r] \ar[d] & \mathcal{S}h_{\tilde{G}} \ar[d] \\ 
				\mathcal{S}h_{H^{\mathbb{Q}}} & \mathcal{S}h_{G^{\mathbb{Q}}},
			}\quad 
			\xymatrix{
				\mathcal{S}h_{\tilde{H}, O_{\kk, (v)}} \ar[r] \ar[d] & \mathcal{S}h_{\tilde{G}, O_{\kk, (v)}} \ar[d] \\ 
				\mathcal{S}h_{H, O_{\kk, (v)}} \ar[r] & \mathcal{S}h_{G, O_{\kk, (v)}}.
			}  
		\end{equation}

		\section{The EO stratifications for unitary Shimura varieties}\label{S:EOUnitShv}
		Write  \( \kk_v \) for the completion of \( \kk \) at \( v \). Then it is an unramified extension of \( \mathbb{Q}_p \), due to our assumption that $p$ is unramified in $\kk$.  Denote by \( \kappa = \kappa(v) \) the residue field of $\kk_v$, so \( \kappa = \mathbb{F}_p \) if \( p \) splits in $\kk$, or \( \kappa = \mathbb{F}_{p^2} \) if \( p \) is inert in $\kk$. Write $\mathcal{M}(n, r)_{\kappa(v)}=\mathcal{M}(n, r)\otimes_{O_{\kk}}\kappa(v)$ and $\mathcal{M}(n, r)_{\Fpbar}$ for the geometric fiber of \(\mathcal{M}(n,r)_{k(v)}\), obtained by base change along the embedding \(\kappa(v) \hookrightarrow \Fpbar\).
		\subsection{The EO stratification of $\mathcal{M}(n, 1)_{\Fpbar}$}
		We first discuss the EO stratification of the geometric fiber \( \mathcal{M}(n, 1)_{\overline{\mathbb{F}}_p} \) of \( \mathcal{M}(n, 1)_{\kappa} \), which has already been studied in \cite{MoonenWeyl}. It is defined by declaring that two $\Fpbar$ points $\underline{A}=(A, \iota, \lambda)$ and $\underline{B}=(A',\iota', \lambda')$ of $\mathcal{M}(n, 1)_{\Fpbar}$ lie in the same stratum if their $p$-kernels $A[p]$ and $B[p]$ are isomorphic, compatible with additional structures, i.e., 
		if \(\underline{A}[p]\cong \underline{A'}[p]\), where $\underline{A}[p]$ denotes the $1$-truncated triple \[(A[p], \iota: \tilde{\kappa} \to \End_{\Fpbar}(A[p]), \lambda:A[p]\cong A^{\vee}[p]=A[p]^{\vee}),\]
		equivalently, if $(A[p], \iota)\cong (A'[p], \iota')$ \cite[Thm.~6.7]{MoonenWeyl}. Here we set \(\tilde{\kappa}=O_\kk/pO_\kk\). It is naturally identified with 
\(\BF_p\times \BF_p\) if \(p\) splits in \(\kk\), and with
\(\BF_{p^2}\) if \(p\) is inert in \(\kk\). It comes equipped with the involution $*: \tilde{\kappa} \to \tilde{\kappa}$ given by the permutation $(x_1, x_2)\mapsto (x_2, x_1)$ in the first case and by the unique nontrivial automorphism in the second case. Here we view $\tilde{\kappa}$ as a simple $\mathbb{F}_p$-algebra with an involution $*: \tilde{\kappa}\to \tilde{\kappa}$ as in \cite[\S6]{MoonenWeyl}.
        
		  Hence the problem of defining the EO stratification of $\mathcal{M}(n, 1)_{\Fpbar}$ reduces to the problem of classifying the pair $(A, \iota)$, consisting of a BT-1 $A$ over $\Fpbar$ with an action $\iota: \tilde{\kappa}\to \End_{\Fpbar}(A)$. It follows essentially from \cite{MoonenWeyl} that the EO stratification $\mathcal{M}(n, 1)_{\Fpbar}$ is given by
        \[\mathcal{M}(n, 1)_{\Fpbar} = \bigsqcup_{a \in W_{\mathrm{EO}}^{(n, 1)}} \mathcal{M}(n, 1)_{\Fpbar}^a, \quad \text{ with } W_{\mathrm{EO}}^{(n, 1)}:= \{ \, 0, 1, \ldots, n\, \},\] where the individual stratum $\mathcal{M}(n, 1)_{\Fpbar}^a$ is described as follows. Given a $\Bar{\BF}_p$-point $\underline{A}=(A, \iota, \lambda)$ of $\mathcal{M}(n, 1)_{\Fpbar}$, let $M=(M, \rF, \rV)$ be the standard (contravariant) Dieudonn\'e module attached to $A[p]$. The action of $\tilde{\kappa}$ induces a decomposition, $M=M_1\oplus M_2$. We distinguish \( M_1 \) and \( M_2 \) by specifying that \( \dim M_1[F] = 1 \), which implies that \( \dim M_2[F] = n \).
		Consider the canonical filtration of $M$ in the sense of \cite[\S2.5]{MoonenWeyl}, 
		\[		M_{\emptyset, \bullet} 
		= \bigl(0 = M_{\emptyset, 0} \subsetneq M_{\emptyset, 1} \subsetneq \cdots \subsetneq M_{\emptyset, 2(n+1)} = M\bigr). 
		\]
		Intersecting $M_{\emptyset, \bullet}$ with $M_1$ gives the filtration 
		\begin{align} \label{CanfilInduced}
			M_{1, \bullet} = (0 = M_{1, 0} \subsetneq M_{1, 1} \subsetneq \cdots \subsetneq M_{1, n+1} = M_1). 
		\end{align}
		This filtration has the property that there exists a unique $b \in \{1, \ldots, n+1\}$ such that the function $\eta_1$ on the set $\{1, \ldots,\rg\}$ given by
		\(
		\eta_1(j) := \dim M_{1, j}[F]
		\), jumps at $j = b$, i.e., $\eta_1(b-1) = 0$ and $\eta_1(b) = 1$. Then $\underline{A}\in \mathcal{M}(n, 1)^a_{\Fpbar}$ for $a := b-1$.

        The above stratification enjoys the following properties: for each
\(a\in W_{\EO}^{(n,1)}\), the stratum \(\mathcal{M}(n,1)^a_{\Fpbar}\) is smooth
of dimension \(n\), and the Zariski closure of \(\mathcal{M}(n,1)^a_{\Fpbar}\)
is equal to the union of all \(\mathcal{M}(n,1)^b_{\Fpbar}\) with \(b\le a\).

\subsection{Standard basis for unitary Dieudonn\'e modules}\label{S:standardbasis}
Given a $\Bar{\BF}_p$-point $\underline{A}=(A, \iota, \lambda)$ of $\mathcal{M}(n, 1)_{\Fpbar}$, let $M=(M, \rF, \rV)$ be the standard (contravariant) Dieudonn\'e module attached to $A[p]$, and $M=M_1\oplus M_2$ the decomposition induced by the action of $\tilde{\kappa}$.  
Thanks to \cite[4.9]{MoonenWeyl}, there exists a basis $\{v_{ij} \mid 1 \leq i \leq 2, 1 \leq j \leq n+1\}$ of $M$ such that 
			\[
			\rF(v_{1,j}) =
			\begin{cases}
				v_{\gamma(1),j}, & \text{if } 1 \leq j \leq a, \\
				0, & \text{if } j = a+1, \\
				v_{\gamma(1),j-1}, & \text{if } j > a+1,
			\end{cases}
			\quad 
			\rV(v_{1,j}) =
			\begin{cases}
				0, & \text{if } j = 1, \\
				v_{\gamma(1),j-1}, & \text{if } 1 < j \leq \rg-a, \\
				v_{\gamma(1),j}, & \text{if } j > \rg-a.
			\end{cases}
			\]
            \[
			\rF(v_{2,j}) =
			\begin{cases}
				0, & \text{if } j \neq \rg-a,\\ 
				v_{\gamma(2),1}, & \text{if } j =\rg-a,
			\end{cases}
			\quad 
			\rV(v_{2,j}) = 
			\begin{cases}
				0, & \text{if } j \neq \rg, \\
				v_{\gamma(2), a+1}, & \text{if } j =\rg.
			\end{cases}
			\]            
Here \(\gamma(i)=i\) for \(i=1,2\) if \(p\) splits in \(\kk\), whereas
\(\gamma(1)=2\) and \(\gamma(2)=1\) if \(p\) is inert in \(\kk\).
We recall this structure on \((M,\rF,\rV)\) here, since it plays an implicit role in the EO stratification of \(\mathcal{M}(n,1)_{\Fpbar}\) and will also be used later.
		
		\subsection{EO Stratification of $\Sh_{G^{\mathbb{Q}}, \Fpbar}$}
		 Intersecting the EO stratification $\mathcal{M}(n, 1)_{\Fpbar}$ with $\Sh_{G^{\mathbb{Q}}, \Fpbar}$ induces:
		\(
		\Sh_{G^{\mathbb{Q}}, \Fpbar} = \bigsqcup_{a \in W_{\mathrm{EO}}^{(n, 1)}} \Sh_{G^{\mathbb{Q}}, \Fpbar}^a.
		\)
		This stratification coincides with the zip stratification in \cite{VW13}, implying that $W_{\mathrm{EO}}^{(n, 1)}$ corresponds to the minimal coset representatives ${}^{\mu}W$ for $(\mathcal{G}_{\kappa}^{\mathbb{Q}}, \mu)$. Here, $\mathcal{G}^{\mathbb{Q}} \coloneqq \mathrm{GU}(\Lambda_{\Zp})$ is the reductive model over $\mathbb{Z}_p$ for $G^{\mathbb{Q}}\otimes_{\Q}\Qp$, defined by the self-dual lattice $\Lambda$ of $W$, and $\mathcal{G}_{\kappa}^{\mathbb{Q}}$ is its base change to $\kappa$. Moreover, $\mu: \mathbb{G}_{m,\kappa} \to \mathcal{G}_{\kappa}^{\mathbb{Q}}$ denotes a representative in the conjugacy class $[\mu]_{\kappa}$.

        Below we make explicit the bijection between the indexing sets
\(W_{\mathrm{EO}}^{(n,1)}\) and \({}^{J_{\mu}}W\). Our discussion partially
follows \cite[\S3]{WoodingEO}. To ease notation, for an algebraic group \(G\) over \(\Fpbar\), we write \(G\)
for \(G(\Fpbar)\), and set \(\rg=n+1\). Let
\(\Lambda_\kappa=\Lambda_\kappa^1\oplus \Lambda_\kappa^0\) be the weight
decomposition of \(\Lambda_\kappa\) induced by the cocharacter \(\mu\). For
\(i\in\{1,2\}\), choose a \(\kappa\)-basis \(e_{i,1},\dots,e_{i,\rg}\) of
\(\Lambda_\kappa^i\) such that \(\Psi(e_{1,j},e_{2,j'})=0\) unless
\(j+j'=\rg+1\). These choices induce isomorphisms
\(\GL_\kappa(\Lambda_\kappa^i)\cong \GL_{\rg,\kappa}\), which we fix
henceforth. Then we have
\[
\mathcal{G}^{\mathbb{Q}}_{\kappa}(\Fpbar)
=\GU(\Lambda_{\kappa},\Psi)(\Fpbar)
\cong \{(X,aX^{\vee})\in \GL_{\rg}(\Fpbar)\times \GL_{\rg}(\Fpbar)\mid a\in \Fpbar^\times\},
\]
where \(X^\vee=(X^t)^{-1}\). Under these identifications, we may choose a
representative of the cocharacter \(\mu\) whose image in
\(\GL_{\rg,\kappa}\times \GL_{\rg,\kappa}\) is
\(t\mapsto (\diag(t,\dots,t,1),\diag(1,t,\dots,t))\). Let \((B_\rg,T_\rg)\) be the standard Borel pair of
\(\GL_{\rg,\Fpbar}\), where \(B_\rg\) is the upper triangular Borel subgroup
and \(T_\rg\) is the diagonal torus. We identify the Weyl group of \(\GL_\rg\)
with \(W_\rg=\mathfrak{S}_\rg\), whose simple reflections are
\(s_i=(i,i+1)\) for \(1\le i\le \rg-1\). Let \(B\subseteq
\mathcal{G}^{\mathbb{Q}}_{\Fpbar}\) be the subgroup consisting of elements of
the form \((X,aX^\vee)\) with \(a\in \Fpbar^\times\) and \(X\in B_\rg\), and
set \(T=(T_\rg\times T_\rg)\cap \mathcal{G}^{\mathbb{Q}}_{\Fpbar}\). Then
\((B,T)\) is a Borel pair of \(\mathcal{G}^{\mathbb{Q}}_{\Fpbar}\). Under this
choice, the Weyl group \(W\) of \(\mathcal{G}^{\mathbb{Q}}_{\Fpbar}\) is
identified with the subgroup of \(W_\rg\times W_\rg\) consisting of pairs
\((w_1,w_2)\) satisfying \(w_1=w_0w_2w_0\), where \(w_0\in W_\rg\) is the
longest element, and the corresponding simple reflections are the pairs
\((s_i,s_{\rg-i})\) for \(1\le i\le \rg-1\).

Next, let \(\mu_i\) be the cocharacter of \(\GL_\rg\) obtained by composing
\(\mu\) with the projection \(\mathrm{pr}_i:G_\kk\to \GL_\rg\) for \(i=1,2\). Put
\(J_n=S_\rg\setminus\{s_n\}\) and \(J_1=S_\rg\setminus\{s_1\}\). The types of
\(\mu_1\) and \(\mu_2\) are \(J_n\) and \(J_1\), respectively. We only
consider \({}^{J_n}W_\rg\), which consists of those \(w\in W_\rg\) such that
\(w^{-1}(1)<\cdots<w^{-1}(n)\); see \cite[Ex.~3.1.2]{WoodingEO}. Via
projection to the first factor, \({}^JW\) is identified with
\({}^{J_n}W_\rg\). We therefore obtain the following bijection:
\[
{}^JW \xrightarrow{1\text{-}1} {}^{J_n}W_\rg \xrightarrow{1\text{-}1} \{1,\ldots,\rg\} \xrightarrow{1\text{-}1} W_{\mathrm{EO}}^{(n,1)},
\]
where the first map is \((w_1,w_2)\mapsto w_1\), the second is
\(w\mapsto w^{-1}(1)\), and the last is \(b\mapsto b-1\). Hence an element
\(a\in W_{\mathrm{EO}}^{(n,1)}\) corresponds to the cycle
\(\delta_a=(1\,2\,\cdots\,a+1)\in {}^{J_n}W_\rg\).

\begin{remark}
Note that our \({}^JW\) here corresponds to the \({}^JW\) in
\cite{WoodingEO} after applying the permutation
\((w_1,w_2)\mapsto (w_2,w_1)\).
\end{remark}

		\subsection{EO Stratification of $\Sh_{\tilde{G}, \Fpbar}$}

       For a \(\kappa(v)=\Fp\)-algebra \(S\),
the groupoid \(\mathcal{M}(1,0)_{\kappa(v)}(S)\) parametrizes pairs
\((A_0,\iota)\), where \(A_0\) is an elliptic curve over \(S\) and
\(\iota:O_{\kk}\hookrightarrow \End_S(A_0)\) is an
\(O_{\kk}\)-action such that the induced action of
\(O_{\kk}/p\) on \(\mathrm{Lie}(A_0/S)\) is via the composition
\(
O_{\kk}/p \xrightarrow{\mathrm{pr}} \kappa(v)\to O_S. 
\)
For \(S=\Spec \Fpbar\), it follows from Deuring's theorem that the elliptic curves underlying the \(\Fpbar\)-points of \(\mathcal{M}(1,0)_{\kappa(v)}\) are all \emph{ordinary} when \(p\) splits in \(\kk\), and all \emph{supersingular} when \(p\) is inert in \(\kk\). Hence, the EO stratification for \(\mathcal{M}(1,0)_{\Fpbar}\) is trivial. 
 Thus, the EO stratification of $\mathcal{M}(n, 1)_{\Fpbar} \times \mathcal{M}(1, 0)_{\Fpbar} $ is:
		\[
		 \mathcal{M}(n, 1)_{\Fpbar}\times\mathcal{M}(1, 0)_{\Fpbar}  = \bigsqcup_{a \in W_{\mathrm{EO}}^{(n, 1)}}  \mathcal{M}(n, 1)_{\Fpbar}^a \times \mathcal{M}(1, 0)_{\Fpbar}.
		\]
		Intersecting with $\Sh_{\tilde{G}, \Fpbar}$ yields the EO stratification \(
		\Sh_{\tilde{G}, \Fpbar} = \bigsqcup_{a \in W_{\mathrm{EO}}^{(n, 1)}} \Sh_{\tilde{G}, \Fpbar}^a
		\) of $\Sh_{\tilde{G}, \Fpbar}$. Explicitly, an $\Fpbar$-point $(A_0, \iota_0, \lambda_0, A, \iota, \lambda, \bar{\eta})$ of $\Sh_{\tilde{G}, \Fpbar}$ lies in $\Sh_{\tilde{G}, \Fpbar}^a$ if and only if $(A, \iota, \lambda)$ lies in $\mathcal{M}(n, 1)_{\Fpbar}^a$.

		\begin{remark}\label{Rmk:EOCenIsogUnit}
Let \(\mathcal{G}\) and \(\tilde{\mathcal{G}}\) be the \(\mathbb{Z}_p\)-reductive
models of \(G\otimes \Qp\) and \(\tilde{G}\otimes \Qp\), respectively,
constructed from the lattices \(\Lambda^\flat\) and \(\Lambda_0\).
\begin{enumerate}
    \item By construction, the EO stratification on
    \(\Sh_{\tilde{G},\Fpbar}\) is the pullback of the EO stratification on
    \(\Sh_{G^{\mathbb{Q}},\Fpbar}\) along the morphism
    \(\Sh_{\tilde{G},\Fpbar}\to \Sh_{G^{\mathbb{Q}},\Fpbar}\). This also
    follows from Lemma~\ref{Lem:ZipIdentity}, since the projection
    \(\tilde{\mathcal{G}}_{\kappa}\to \mathcal{G}_{\kappa}^{\mathbb{Q}}\)
    induces an isomorphism on adjoint groups.

    \item Similarly, the natural projection
    \(\tilde{\mathcal{G}}_{\kappa}\to \mathcal{G}_{\kappa}\) induces an
    isomorphism on adjoint groups, so the corresponding sets \({}^J W\) are
    identified. Hence the EO stratification on \(\Sh_{\tilde{G},\Fpbar}\) is
    the pullback of the stratification on \(\Sh_{G,\Fpbar}\) along the
    morphism \(\Sh_{\tilde{G},\Fpbar}\to \Sh_{G,\Fpbar}\). Note that
    \(\Sh_{G,\Fpbar}\) is of abelian type, and its EO stratification is
    established in \cite{SZ22}.
\end{enumerate}
\end{remark}

        \subsection{EO strata under the embedding {$
		\Sh_{\widetilde{\GU}(n, 1), \Fpbar} \to \Sh_{\widetilde{\GU}(n+1,1), \Fpbar}$}}
		Specifically, given an EO stratum of $\Sh_{\tilde{H}, \Fpbar}$, we wish to determine the corresponding EO stratum of $\Sh_{\tilde{G}, \Fpbar}$ to which it maps. As discussed in Remark \ref{Rmk:EOCenIsogUnit}, this problem is equivalent to the same question for the morphism  
		\(
		\Sh_{H, \Fpbar} \to \Sh_{G, \Fpbar}.
		\)
		For notational convenience, in this section we increase the dimension of \(W\) by one and suppose that
		\(
		\dim_{\kk}W = n+2,
		\)
		so that \((W, \Psi)\) and \((W^{\flat}, \Psi|_{W^{\flat}})\) have signature \((n+1, 1)\) and \((n, 1)\), respectively. To make the dimensions transparent, below we write $\Sh_{\widetilde{\GU}(n, 1), \Fpbar}$ for $\Sh_{\tilde{H}, \Fpbar}$ and similarly we write $\Sh_{\GU(n, 1), \Fpbar}$ and $\Sh_{\mathrm{U}(n, 1), \Fpbar}$ for $\Sh_{H^{\Q}, \Fpbar}$ and $\Sh_{H, \Fpbar}$ respectively.	
		
		% \subsection{The proof of Theorem {\ref{Thm:MainThmUnit} ($p$ splits in $\kk$)}}
		\begin{theorem}[Split case]\label{KeyRelationUnitaryEOSplit} 
			Suppose that $p$ splits in $\kk$. Let $a\in W_{\mathrm{EO}}^{(n, 1)}$. Let $a'\in W_{\mathrm{EO}}^{(n+1, 1)}$ be such that the EO stratum $\Sh_{\widetilde{\GU}(n, 1), \Fpbar}^a$ is mapped into the stratum $\Sh_{\widetilde{\GU}(n+1, 1), \Fpbar}^{a'}$ under the morphism \(
			\Sh_{\widetilde{\GU}(n, 1), \Fpbar} \to \Sh_{\widetilde{\GU}(n+1, 1), \Fpbar}
			\).  Then we have the relation $ a' =a+1$.
		\end{theorem}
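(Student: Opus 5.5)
We use the moduli description \eqref{ModiofEmbedH2G} of the embedding: a point $(A_0,\iota_0,\lambda_0,A^\flat,\iota^\flat,\lambda^\flat,\bar\eta^\flat)$ with $(A^\flat,\iota^\flat,\lambda^\flat)\in\mathcal{M}(n,1)^a_{\Fpbar}$ is sent to a point whose $\mathcal{M}(n+1,1)$-component is $A:=A^\flat\times A_0$ with the product $O_\kk$-action. By the description of EO strata, we only need to determine the index $a'$ of $(A[p],\iota)=(A^\flat[p],\iota^\flat)\oplus(A_0[p],\iota_0)$. On Dieudonn\'e modules, $M=M^\flat\oplus M^0$, compatibly with the $\tilde\kappa=\Fp\times\Fp$-decompositions $M_i=M^\flat_i\oplus M^0_i$ for $i=1,2$. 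Since $p$ splits, Deuring's theorem (recalled in the discussion of $\mathcal{M}(1,0)$) says $A_0$ is ordinary. Its signature is $(1,0)$, so $M^0_1$ and $M^0_2$ are each one-dimensional. One of them is killed by $\rF$ and the other is bijective for $\rF$; that is, one is of multiplicative type and the other of étale type.

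Because $\gamma=\mathrm{id}$ in the split case, $\rF$ and $\rV$ preserve each $M_i$, and the invariant $a$ can be read off from $M_1$ alone. From the standard basis in \S\ref{S:standardbasis}, $M_1^\flat$ has $\rF$ sending $v_{1,j}\mapsto v_{1,j}$ for $j\le a$ (up to the indexing shift) and killing $v_{1,a+1}$. Thus the $\rF$-bijective part, i.e.\ the étale part of the $1$-component, has dimension $a$. In other words, $a$ is the $p$-rank contribution of $M_1$. The same reading applies to $\mathcal{M}(n+1,1)$, so $a'$ equals the dimension of the étale part of $M_1$. Equivalently, in the split case EO strata are determined by the $p$-rank, and the $p$-rank of $(A,\iota)$ is $2\dim(\text{étale part of } M_1)$ or a similar monotone function of $a$. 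We would check that normalization against Lemma~\ref{Lem:PrankUnit}, or directly against the standard basis.

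Next we check which of $M^0_1,M^0_2$ is the étale factor. The signature convention distinguishes $M_1$ by $\dim M_1[\rF]=1$. For $A_0$ of signature $(1,0)$, the Lie algebra lives in the component on which $O_\kk$ acts through $\kappa(v)$. Under the distinguishing convention for $M^\flat$ and $M$, this makes $M^0_2$ the component with $\rF=0$ and $M^0_1$ the component with $\rF$ bijective. We must verify this with the Kottwitz conditions: $M^\flat_2[\rF]$ has dimension $n$ and $M_2[\rF]$ must have dimension $n+1$, which forces $\rF|_{M^0_2}=0$. It then follows that $M_1=M_1^\flat\oplus M_1^0$ has étale part of dimension $a+1$, hence $a'=a+1$. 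Alternatively, one can say the canonical filtration of $M_1$ has its first $\rF$-jump shifted by one.

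The main obstacle is the bookkeeping in this second step: matching the labelling $M_1,M_2$ on both sides, and checking that the invariant $b$, defined via the canonical filtration, really equals one plus the étale rank of $M_1$ rather than something like $n+1-b$. To handle it, we would compute the canonical filtration directly from the standard basis of \S\ref{S:standardbasis} for a direct sum with an étale summand. We would then confirm the monotonicity in the smallest case $n=0$, where the source is a point and the image must be the ordinary ($a'=1$) locus.
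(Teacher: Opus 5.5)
Your argument is correct. It rests on the same mechanism as the paper's proof (the ordinary curve $A_0$ raises an \'etale rank by one), but you apply it to a finer invariant.

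The paper works with the total $p$-rank. In the split case the EO, Newton and $p$-rank stratifications coincide, and the stratum of dimension $i$ has $p$-rank $i$ for $i\le n-1$ and $n+1$ for $i=n$. The identity $f(A^\flat\times A_0)=f(A^\flat)+1$ then moves dimension $i$ to $i+1$, including at the top, where $n+1\mapsto n+2$.

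You instead identify the EO index with the \'etale rank of the $\tilde{\kappa}$-component $M_1$, read off from the $\rF$-formulas of \S\ref{S:standardbasis} with $\gamma=\mathrm{id}$. This also settles the ``main obstacle'' you flag: $\ker(\rF|_{M_1})=\langle v_{1,a+1}\rangle$ forces $\dim M_{1,j}[\rF]$ to jump exactly at $b=a+1$. Your route makes the index additive under $A^\flat\mapsto A^\flat\times A_0$, so you never need the irregular $p$-rank dictionary with its jump from $n-1$ to $n+1$. The price is tracking which eigencomponent of $A_0[p]$ is \'etale. The paper's route avoids this, because the total $p$-rank does not depend on the labelling.

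Two small points.

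First, fix the labels by your signature argument: $\mathrm{Lie}(A_0)$ sits in the same eigencomponent as the $n$-dimensional part of $\mathrm{Lie}(A^\flat)$. Do not fix them by counting $\dim M_i[\rF]$. For $n\le 1$, either the source or the target has signature $(1,1)$. There both components have one-dimensional $\rF$-kernel, so the convention $\dim M_1[\rF]=1$ does not distinguish them.

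Second, your aside that the $p$-rank equals $2\dim(\text{\'etale part of }M_1)$ is wrong. The $p$-rank is $a$ for $a\le n-1$ and $n+1$ for $a=n$. Also, Lemma~\ref{Lem:PrankUnit} concerns the inert case. Neither point affects your main line.
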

		\begin{proof}
		    In this split case, the unitary Shimura varieties
\(\Sh_{\widetilde{\GU}(n, 1),\Fpbar}\) and \(\Sh_{\widetilde{\GU}(n+1, 1),\Fpbar}\) are of \emph{fake}
unitary type: their EO, Newton, and \(p\)-rank stratifications coincide. More
precisely, for each \(0\le i\le n\), there is a unique stratum of dimension
\(i\). For \(i=n\), this is the stratum consisting of abelian varieties over
\(\Fpbar\) of \(p\)-rank \(n+1\); for \(0\le i\le n-1\), it is the stratum
consisting of abelian varieties of \(p\)-rank \(i\). For an \(\Fpbar\)-point
\((\underline{A_0}, \underline{A},\bar{\eta})\) of \(\Sh_{\widetilde{\GU}(n,1)}(\Fpbar)\), the
underlying elliptic curve \(A_0\) is \emph{ordinary}. Hence
\(f(A_0\times A)=f(A)+1\), where \(f\) denotes the usual \(p\)-rank of an
abelian variety over \(\Fpbar\).
		\end{proof}
		%\subsection{The proof of Theorem \ref{Thm:MainThmUnit} ($p$ inert in $\kk$)}
		\begin{theorem}[Inert case]\label{KeyRelationUnitaryEOInert} 
			Suppose that $p$ is inert in $\kk$. Let $a\in W_{\mathrm{EO}}^{(n, 1)}$. Let $a'\in W_{\mathrm{EO}}^{(n+1, 1)}$ be such that the EO stratum $\Sh_{\widetilde{\GU}(n, 1), \Fpbar}^a$ is mapped into the stratum $\Sh_{\widetilde{\GU}(n+1, 1), \Fpbar}^{a'}$ under the morphism \(
			\Sh_{\widetilde{\GU}(n, 1), \Fpbar} \to \Sh_{\widetilde{\GU}(n+1, 1), \Fpbar}
			\),  Then we have the relation 
			\begin{equation}\label{EqKeyRelationUnitaryEOInert}
				a' =
				\begin{cases}
					a,   & \text{if } a \leq n/2,\\[2pt]
					a+1, & \text{if } a > n/2.\\[2pt]
				\end{cases}
		\end{equation}	\end{theorem}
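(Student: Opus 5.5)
We argue through the moduli interpretation \eqref{ModiofEmbedH2G}: a point $(\underline{A_0},\underline{A^\flat},\bar\eta^\flat)$ of $\Sh_{\widetilde{\GU}(n,1),\Fpbar}$ is sent to $(\underline{A_0},\underline{A^\flat}\times\underline{A_0},\bar\eta)$. By the description of EO strata on $\Sh_{\tilde G,\Fpbar}$, the stratum of the image is governed by the isomorphism class of $(A^\flat\times A_0)[p]$ with its $\tilde\kappa=\Fpsq$-action. For the same reason, the stratum of the source point is governed by $(A^\flat[p],\iota^\flat)$. Since $p$ is inert, $A_0$ is supersingular by Deuring. Hence the Dieudonn\'e module of $A_0[p]$ is $N=N_1\oplus N_2$ with each $N_i$ one-dimensional, and $\rF,\rV$ interchange the two eigenspaces; this is the case $n=0$, $a=0$ of the standard basis in \S~\ref{S:standardbasis}. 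The Dieudonn\'e module of the image is therefore $M\oplus N$, where $M$ is written in the standard basis of \S~\ref{S:standardbasis} for the index $a$ and $\rg=n+1$.

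The core step is to compute the invariant $b$ for $M\oplus N$. That is, we compute the canonical filtration of $M\oplus N$, intersect it with the first eigenspace $(M\oplus N)_1=M_1\oplus N_1$, and find where $\dim(\cdot)[\rF]$ first becomes nonzero. The canonical filtration is obtained by iterating $\rF$ and $\rV^{-1}$ starting from $0$ and $M$. We could do this directly, since the standard basis gives explicit words in $\rF$ and $\rV^{-1}$. A more robust route is to use Moonen's classification \cite{MoonenWeyl}. With $\tilde\kappa$-action, the BT-1 $M$ decomposes into indecomposable ``cycle'' pieces that are read off from the permutation $\delta_a=(1\,2\,\cdots\,a+1)$ and the Frobenius twist $\gamma$. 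We then add the superspecial piece $N$ and identify the word of the sum. Concretely, we would track how the kernel of $\rF$ on the first component sits relative to the $\rV$-images. Adding $N$ inserts one extra basis vector into each eigenspace: $N_1$ joins $M_1$ and $N_2$ joins $M_2$. The jump position $b$ then either stays put or shifts by one, depending on whether the new vector enters the canonical filtration before or after the position $a+1$.

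To decide which case occurs, we would use the self-duality of the canonical filtration. The polarization makes $M_{\emptyset,\bullet}$ self-dual, and the superspecial piece $N$ is ``central'': it occupies the middle steps of the filtration, at positions $\rg$ and $\rg+1$ of $2\rg$, suitably rescaled. So the new vector in $M_1\oplus N_1$ sits at the middle of the length-$(n+2)$ filtration. If the jump of $M_1$ is at $b=a+1\le n/2+1$, it lies strictly before the middle and is unaffected, giving $a'=a$. Otherwise the inserted middle vector precedes it, shifting the jump by one, so $a'=a+1$.

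The main obstacle is making this middle-insertion heuristic rigorous, and especially handling the boundary case $a=n/2$ for even $n$ correctly. For that we would write out the explicit $\rF,\rV$ matrices on $M\oplus N$ in the standard basis and verify by direct iteration of $\rF$ and $\rV^{-1}$. We would first check small cases ($n=0,1,2$) against the known pictures: the Shimura curve and the Picard surface, and consistency with Remark~\ref{Rmk:n=1Embed} and \cite{BW06}. As a cross-check, dimensions must be compatible: the superspecial/basic locus of dimension $\lfloor n/2\rfloor$ maps into the basic locus, which matches the cutoff $n/2$.
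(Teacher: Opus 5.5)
Your reduction matches the paper's first proof and is sound. The image point has Dieudonn\'e module $L=M\oplus N$. Every step of the canonical filtration of $L$ has the form $W(M)\oplus W(N)$ for a word $W$ in $\rF,\rV^{-1}$, so the flag on $L_1=M_1\oplus N_1$ is the flag of $M_1$ with $N_1$ inserted once. Everything then hinges on whether $N_1$ enters before or after the line $M_1[\rF]=\langle v_{1,a+1}\rangle$.

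One small correction: the summand joining $M_1$ must satisfy $N_1[\rF]=0$, since $\dim L_1[\rF]=1$. So if you take $N$ from the $n=0$ standard basis, the labels must be swapped.

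The before/after question is the whole content of the theorem, and your argument for it does not work. Self-duality only forces $N_2$ and $N_1$ to enter at symmetric steps $j$ and $2(n+2)+1-j$ of the flag of $L$; it does not put them in the middle. Take $n=2$, $a=1$ and write $N=\langle w_1,w_2\rangle$. The canonical flag of $L$ adds successively $v_{2,1},v_{1,1},w_2,v_{2,2},v_{1,2},w_1,v_{2,3},v_{1,3}$, so $N$ enters at steps $3$ and $6$ of $8$, not $4$ and $5$. As you noticed, the criterion ``strictly before the middle'' also breaks at $a=n/2$. So the heuristic gives the right cutoff but no proof, and the ``direct iteration'' you defer to is everything that remains.

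The missing idea is an invariant that locates $N_1$ and behaves well under direct sums; the paper uses $\mathsf T=\rV^{-1}\rF$.
\begin{itemize}
\item For $X=X_1\oplus X_2$ one has $(\mathsf TX)_1=\mathsf T_1(X_1)$, where $\mathsf T_1=(\rV|_{M_1})^{-1}\rF$.
\item Set $r=\min\{a+1,n+1-a\}$ and $s=\max\{a,n+1-a\}$. The standard basis gives $\mathsf T_1(M_{1,j})=M_{1,j+1}$ for $j<r$, $M_{1,j}$ for $r\le j\le s$, and $M_{1,j-1}$ for $j>s$.
\item Hence the iterates of $\mathsf T$ from $0$ and from $M$, which are canonical steps, have $M_1$-parts stabilising at $M_{1,r}$ and $M_{1,s}$.
\item On $N$, both $\rF$ and $\rV$ map $N_1$ isomorphically onto $N_2$, so $\mathsf T_1$ fixes both $0$ and $N_1$.
\item Therefore the two limits for $L$ are $M_{1,r}$ and $M_{1,s}\oplus N_1$. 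So $N_1$ enters strictly after step $r$ and no later than step $s+1$.
\item If $a\le n/2$, then $r=a+1$ and $v_{1,a+1}\in M_{1,r}$, so the jump does not move.
\item If $a>n/2$, then $s=a$ and the step $M_{1,s}\oplus N_1$ has trivial $\rF$-kernel, so the jump moves up by one.
\end{itemize}
The paper packages this as $r'=r$, $s'=s+1$ and then inverts the injective map $a\mapsto(r,s)$.

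Your alternative via Moonen's classification is essentially the paper's second proof. To run it you need three ingredients your proposal does not supply:
\begin{itemize}
\item the B\"ultel--Wedhorn description of $A[p]$ as $\underline{\bar B}(\rho)\oplus\underline{\bar S}^{\,n-\rho}$;
\item the observation that $A_0$ contributes exactly one more $\underline{\bar S}$, so $\rho$ is preserved;
\item their codimension formula in $\rho$, which shows the codimension is unchanged for even $\rho$ and rises by one for odd $\rho$, with even $\rho$ being exactly the strata of dimension $>n/2$.
\end{itemize}
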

		
		  Below we give two slightly different proofs of this theorem, both based on the moduli interpretation of the embedding
         \(
       \Sh_{\widetilde{\GU}(n, 1),\Fpbar} \to \Sh_{\widetilde{\GU}(n+1, 1),\Fpbar}
       \)
       in \eqref{ModiofEmbedH2G}, and  ultimately rely on the classification of unitary BT-1's in \cite{MoonenWeyl}.
       Let $(A_0, \iota_0, \lambda_0, A, \iota, \lambda, \bar{\eta})$ be an $\Fpbar$-point of  \(
		\Sh_{\widetilde{\GU}(n, 1)}\), and let $(M, \rF,\rV)$ and $(N, \rF, \rV)$ be the Dieudonn\'e module of $A$ and $A_0$ respectively.
       Denote by \(\rF_i:M_i\to M_{\gamma(i)}\) and \(\rV_i:M_i\to M_{\gamma(i)}\) the restrictions of \(\rF\) and \(\rV\) to \(M_i\), respectively, where, by convention, \(\gamma(1)=2\) and \(\gamma(2)=1\).
       We consider the operator $\mathsf{T}=\rV^{-1}\rF:M\to M$ on $M$, and a similar operator \(\mathsf{T}_i: M_i \to M_i\) on $M_i$ obtained by replacing $\rV^{-1}\rF$ by  $\rV_i^{-1}\rF_i$. Let $M=M_1\oplus M_2$ be the decomposition induced by the action of  $\tilde{\kappa}$, and let $M_{i, \bullet}$ be the filtration as in \eqref{CanfilInduced}. 
		One then checks easily that for each of $i=1, 2$, we have 
		\(
		\mathsf{T}(M_{1,j}\oplus M_{2,j})\cap M_i= \mathsf{T}_i(M_{ij}).
		\)
		Set \[r=\min\{a+1, n+1-a\}, \text{\ and\ } s=\max\{a, n+1-a\}.\]
        \begin{lemma}[{cf.~\cite[\S~4.2.2]{ABFGGNEOGU}}] \label{Lem:OperatorT} The following holds: \begin{enumerate}
				\item  For \(1\le j\le \rg\), \(\mathsf T_1(M_{1,j})=M_{1,j'}\), where
                \(j'=j+1\) if \(j<r\), \(j'=j\) if \(j\in\{r,s\}\), and \(j'=j-1\) if \(j>s\).
				\item For $c\in \mathbb{Z}_{\geq 0}$, we have $\mathsf{T}_1^c(0)=M_{1, \min\{c, r\}}$ and $\mathsf{T}_1^c(M_1)=M_{1, \max\{c, s\}}$.
			\end{enumerate}
		\end{lemma}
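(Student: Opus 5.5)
The plan is to prove both parts by direct computation in the standard basis of \S~\ref{S:standardbasis}, in the inert case $\gamma(1)=2$, $\gamma(2)=1$, with $\rg=n+1$. Since $\rF$ and $\rV$ interchange $M_1$ and $M_2$, the operator $\mathsf T_1$ sends a subspace $X\subseteq M_1$ to $\rV_1^{-1}(\rF_1(X))$, where $\rV_1^{-1}$ denotes the full preimage in $M_1$ under $\rV\colon M_1\to M_2$.

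The first step is to identify the filtration \eqref{CanfilInduced} with the coordinate flag:
\[
M_{1,j}=\Span\{v_{1,1},\dots,v_{1,j}\},\qquad 0\le j\le \rg.
\]
To get this, I would compute the canonical filtration of $M$ from the explicit formulas for $\rF$ and $\rV$. It is generated from $0$ and $M$ by taking images under $\rF$ and $\rV^{-1}$, and in the chosen basis each such image is a span of basis vectors. I would check that the pieces meeting $M_1$ are exactly initial segments in the second index. This is consistent with the characterization of $a$: $\dim M_{1,j}[\rF]$ jumps exactly at $j=a+1$, because $\ker \rF_1=\langle v_{1,a+1}\rangle$. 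I expect this identification to be the main obstacle, and mostly a bookkeeping issue, since the canonical filtration of \cite[\S2.5]{MoonenWeyl} is defined abstractly. My first attempt would be to run the refinement process with the explicit formulas and verify, by induction on the refinement steps, that each step produces a coordinate flag. Alternatively, I would cite \cite[4.9]{MoonenWeyl}, where the standard basis is constructed adapted to the canonical filtration.

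With the flag identified, part (1) is a two-step computation.

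First, the formulas for $\rF(v_{1,j})$ give $\rF(M_{1,j})=\Span\{v_{2,1},\dots,v_{2,k}\}$, where $k=j$ if $j\le a$ and $k=j-1$ if $j>a$; this is because $v_{1,a+1}$ is killed and the index shifts down afterwards.

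Second, I compute preimages under $\rV$. On $M_1$, $\ker\rV=\langle v_{1,1}\rangle$, and $\rV(v_{1,j})$ has index $j-1$ for $j\le \rg-a$ and index $j$ for $j>\rg-a$. So the image of $\rV|_{M_1}$ misses exactly $v_{2,\rg-a}$. Hence the preimage of $\Span\{v_{2,1},\dots,v_{2,k}\}$ is $M_{1,k'}$, where $k'=k+1$ if $k<\rg-a$ and $k'=k$ if $k\ge \rg-a$.

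Combining these in the cases $j\le a$ versus $j>a$, and $k<\rg-a$ versus $k\ge\rg-a$, gives $j'$:
\begin{itemize}
\item $j'=j+1$ if $j<r$;
\item $j'=j$ if $r\le j\le s$;
\item $j'=j-1$ if $j>s$,
\end{itemize}
where $r=\min\{a+1,n+1-a\}$ and $s=\max\{a,n+1-a\}$. In particular $j'=j$ for $j\in\{r,s\}$, as claimed. I would record explicitly that the intermediate range $r<j<s$, which can be nonempty, also consists of fixed indices, since it is needed for part (2).

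Part (2) then follows by induction on $c$. Starting from $M_{1,0}=0$, the index increases by one at each step until it reaches the fixed index $r$ and stays there, giving $\mathsf T_1^c(0)=M_{1,\min\{c,r\}}$. Starting from $M_{1,\rg}=M_1$, the index decreases by one until it reaches $s$. Counting steps gives the second formula, with the index of $\mathsf T_1^c(M_1)$ written as $\max\{\rg-c,s\}$. The exponent written in the statement should be compared against this; I suspect a typo there.
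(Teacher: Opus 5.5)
Your proposal is correct and is essentially the paper's argument: the paper likewise just takes $M_{1,j}=\Span\{v_{1,1},\dots,v_{1,j}\}$ as given and verifies (1) and (2) directly in the standard basis, exactly by your image/preimage computation. Discharging that identification by citing the adapted basis of \cite[4.9]{MoonenWeyl} is the right route, because the coarse $\rF,\rV^{-1}$-stable filtration alone is not a full flag of $M_1$ in general; for instance, for $a=0$ and $n\ge 2$ it meets $M_1$ only in $0\subset\langle v_{1,1}\rangle\subset M_1$. You are also right that the second formula in (2) should read $\mathsf T_1^c(M_1)=M_{1,\max\{\rg-c,s\}}$, which is what the later application with $c\gg 1$ uses; the fixed range $r<j<s$ is true but not actually needed for (2), since the iterates from $0$ and from $M_1$ stop at $r$ and $s$ without entering it.
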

		\begin{proof} Assume the filtration $M_{i,\bullet}$ of $M_i$ is generated by $\{v_{ij} \mid 1 \leq j \leq j'\}$. The lemma follows by direct verification, using the standard basis  $(M, \rF, \rV)$ recalled in \ref{S:standardbasis}. 
		\end{proof}
		
		\begin{proof}[{Proof of Theorem \ref{KeyRelationUnitaryEOInert}}] 
			Let $N=N_1\oplus N_2$ be the decomposition induced by the action of $\tilde{\kappa}$ and $N_{i, \bullet}$ the counterpart of $M_{i, \bullet}$ for $N$. Then, for dimension reasons, we have $N_{1,1}=N_1$ and $N_1[F]=0$. Set $L=M\oplus N$, the Dieudonn\'e module of $A\times A_0$, and let $L_{i, \bullet}$ be the counterparts of $M_{i, \bullet}$ for $L$. We claim that for each $c\gg 1$, we have
			\begin{align}\label{Goingdown}
				\mathsf{T}_1^{c}(L_{1,0})=M_{1, r}\oplus N_{1, 0}=M_{1,r},\quad  \mathsf{T}_1^{c}(L_1)= M_{1, s}\oplus N_{1,1}=M_{1,s}\oplus N. 
			\end{align}
			Indeed, by Lemma~\ref{Lem:OperatorT}, one has 
			\(
				\mathsf{T}^c(L_{1,0})
				= \mathsf{T}_1^c(M_{1,0})\oplus \mathsf{T}_1^c(N_{1,0})
				= (\mathsf{T}_1|_{M_1})^{c}(M_{1,0})\oplus N_{1,0}
				= M_{1, r}.
			\)
			The equality in \eqref{Goingdown} follows from a similar argument, noting that $\mathsf{T}_1(N_{1,1})=N_{1,1}$. The claim is proved. 
			
			Let $r'$ and $s'$ be the counterparts of $r$ and $s$ for $L=M\oplus N$. Then we see that $r'=\dim L_{1, r'}= r$ and $s'=\dim L_{1, s'}= s+1$. Unwinding the definitions of $r'$ and $s'$, one finds that
			\begin{align*}
				\min\{ a+1, n+1-a\} = \min \{ a'+1, n+2-a'\}, \quad 
				\min\{ a, n+1-a \}+1= \max \{ a', n+2-a'\}.
			\end{align*}
			It follows directly from these two equations that the desired relation in \eqref{EqKeyRelationUnitaryEOInert} holds.
		\end{proof} 
	
		\subsection*{Another proof of Theorem~\ref{KeyRelationUnitaryEOInert}}
		To ease notation, from now on we omit the subscript $\Fpbar$ from the notation $\Sh_{\widetilde{\GU}(n,1),\Fpbar}$, $\Sh_{\widetilde{\GU}(n+1,1),\Fpbar}$, and their EO strata.
		Following \cite[\S5.4]{BW06}, the EO stratification of $\Sh_{\widetilde{\GU}(n,1)}$ can also be described as
		\[
		\Sh_{\widetilde{\GU}(n,1)}=\bigcup_{1\leq \rho \leq n+1}\Sh_{\widetilde{\GU}(n,1),\rho},
		\]
		where $\Sh_{\widetilde{\GU}(n,1),\rho}$ is the locus of points $(\underline{A_0},\underline{A}, \bar{\eta})$ over $\Fpbar$ such that the associated unitary BT-1 corresponds to the unitary Dieudonn\'e space
		\(
		\underline{\bar{B}}(\rho)\oplus \underline{\bar{S}}^{\,n-\rho}.
		\)
		We refer to loc.\ cit.\ for the notation $\underline{\bar{B}}(\rho)$ and $\underline{\bar{S}}^{\,n-\rho}$. Note, however, that $\underline{\bar{S}}$ is isomorphic to the Dieudonn\'e space attached to the supersingular elliptic curve $A_0$. The EO stratum $\Sh_{\widetilde{\GU}(n,1),\rho}$ has codimension
		\begin{equation}\label{Eq:Codim}
			\mathrm{cod}(\rho):=\mathrm{codim}(\Sh_{\widetilde{\GU}(n,1),\rho},\Sh_{\widetilde{\GU}(n,1)})
			=
			\begin{cases}
				\rho/2-1,& \text{if } \rho \text{ is even},\\
				n+1-(\rho+1)/2,& \text{if } \rho \text{ is odd}.
			\end{cases}
		\end{equation}
		Since for each $0\leq i\leq n$ there is a unique EO stratum of $\Sh_{\widetilde{\GU}(n,1)}$ of dimension $i$, we have
		\(
		\Sh_{\widetilde{\GU}(n,1),\rho}=\Sh_{\widetilde{\GU}(n,1)}^{\,n-\mathrm{cod}(\rho)}.
		\)
		
		Under the embedding map $\Sh_{\widetilde{\GU}(n,1)}\to \Sh_{\widetilde{\GU}(n+1,1)}$, the tuple $(\underline{A_0},\underline{A}, \bar{\eta})$ is sent to $(\underline{A_0},\underline{A'},\bar{\eta}')$, where $A'=A\times A_0$. Since the BT-1 $A'[p]$ corresponds to the unitary Dieudonn\'e space
		\[
		\underline{\bar{B}}(\rho)\oplus \underline{\bar{S}}^{\,n+1-\rho},
		\]
		the EO stratum $\Sh_{\widetilde{\GU}(n,1),\rho}$ is mapped to the EO stratum $\Sh_{\widetilde{\GU}(n+1,1),\rho}$. Hence, by \eqref{Eq:Codim}, we obtain
		\begin{equation}\label{Eq:CodimReltion}
			\mathrm{codim}(\Sh_{\widetilde{\GU}(n+1,1),\rho},\Sh_{\widetilde{\GU}(n+1,1)})
			=
			\begin{cases}
				\mathrm{codim}(\Sh_{\widetilde{\GU}(n,1),\rho},\Sh_{\widetilde{\GU}(n,1)}),& \text{if } \rho \text{ is even},\\
				\mathrm{codim}(\Sh_{\widetilde{\GU}(n,1),\rho},\Sh_{\widetilde{\GU}(n,1)})+1,& \text{if } \rho \text{ is odd}.
			\end{cases}
		\end{equation}
		Note that the set $\left\{\Sh_{\widetilde{\GU}(n,1),\rho}\mid \rho \text{ is even}\right\}$ is equal to
		\begin{equation}\label{EvenOddEOlist}
			\begin{cases}
				\{\Sh_{\widetilde{\GU}(n,1),2},\Sh_{\widetilde{\GU}(n,1),4},\ldots,\Sh_{\widetilde{\GU}(n,1),n}\}
				=
				\{\Sh_{\widetilde{\GU}(n,1)}^{\,n},\Sh_{\widetilde{\GU}(n,1)}^{\,n-1},\ldots,\Sh_{\widetilde{\GU}(n,1)}^{\,n/2+1}\},
				& \text{if } n \text{ is even},\\[0.5em]
				\{\Sh_{\widetilde{\GU}(n,1),2},\Sh_{\widetilde{\GU}(n,1),4},\ldots,\Sh_{\widetilde{\GU}(n,1),n+1}\}
				=
				\{\Sh_{\widetilde{\GU}(n,1)}^{\,n},\Sh_{\widetilde{\GU}(n,1)}^{\,n-1},\ldots,\Sh_{\widetilde{\GU}(n,1)}^{\,(n+1)/2}\},
				& \text{if } n \text{ is odd}.
			\end{cases}
		\end{equation}
		For both odd and even $n$, this set consists exactly of those EO strata of $\Sh_{\widetilde{\GU}(n,1)}$ of dimension strictly greater than $n/2$. Therefore Theorem~\ref{KeyRelationUnitaryEOInert} follows from \eqref{Eq:CodimReltion} and \eqref{EvenOddEOlist}. \hfill\qedsymbol

        \subsection{Supersingular locus and $p$-ranks for {$\Sh_{\widetilde{\GU}(n+1,1), \Fpbar}$}}\label{S:PrankUnitShv}
        We continue to assume that $p$ is inert in $\kk$. The supersingular locus of the PEL-type Shimura variety $\Sh_{\widetilde{\GU}(n+1,1), \Fpbar}$ is the union, 
        \[
        \bigcup_{\rho\, \mathrm{odd}}\Sh_{\widetilde{\GU}(n+1,1), \rho}= \bigcup_{i\leq n/2}\Sh_{\widetilde{\GU}(n+1,1)}^i;
        \]
		and hence is of dimension $ \lfloor n/2 \rfloor $; see, for example, \cite[Lem.~ 4.2, \S5.4]{BW06}. 
        
      Since $\Sh_{\widetilde{\GU}(n+1,1),\Fpbar}$ is of PEL type, it is natural to consider its $p$-rank stratification. Here we adopt the convention that for each point $(\underline{A}_0, \underline{A}, \bar{\eta})\in \Sh_{\widetilde{\GU}(n+1,1),\rho}(\Fpbar)$, its $p$-rank means the $p$-rank of $A$. Clearly, for $n=0$, the whole space $\Sh_{\widetilde{\GU}(n+1,1),\Fpbar}$ is supersingular, and hence has $p$-rank $0$. Assume now that $n\ge 1$. 
      \begin{lemma}\label{Lem:PrankUnit}Assume that $p$ is inert in $\kk$.
          The $\mu$-ordinary locus $\Sh_{\widetilde{\GU}(n+1,1),\Fpbar}^{n}=\Sh_{\widetilde{\GU}(n+1,1),\Fpbar,2}$ has $p$-rank~$2$, whereas every other EO stratum has $p$-rank $0$.
      \end{lemma}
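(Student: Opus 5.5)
The plan is to compute the \(p\)-rank of \(A\) directly from the unitary Dieudonn\'e module in the standard basis of \S~\ref{S:standardbasis}, now for signature \((n+1,1)\), so \(\rg=n+2\). Recall that the \(p\)-rank of \(A\) equals \(\dim_{\Fpbar}\) of the \(\rF\)-stable part of \(M=\mathbb{D}(A[p])\) on which \(\rF\) is bijective, i.e.\ \(\dim \rF^{N}(M)\) for \(N\gg 0\). Since \(p\) is inert, \(\gamma\) swaps the two summands, so \(\rF^2\) preserves \(M_1\) and \(M_2\). It therefore suffices to compute the rank of \(\rF^{2N}\) restricted to \(M_1\oplus M_2\) in the explicit basis \(\{v_{ij}\}\) for each EO index \(a\in\{0,\dots,n+1\}\).

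The first step is to trace the chains. Using the formulas, \(\rF\) sends \(v_{1,j}\) to \(v_{2,j}\) for \(j\le a\), kills \(v_{1,a+1}\), and sends \(v_{1,j}\) to \(v_{2,j-1}\) for \(j>a+1\). On \(M_2\), \(\rF\) is zero except \(v_{2,\rg-a}\mapsto v_{1,1}\). Hence the image of \(\rF^2\) on \(M_1\) is at most one-dimensional, spanned by \(v_{1,1}\), and it is nonzero only if some \(v_{1,j}\) maps under \(\rF\) to \(v_{2,\rg-a}\). That happens for \(j=\rg-a\) when \(\rg-a\le a\), and for \(j=\rg-a+1\) when \(\rg-a+1>a+1\). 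Iterating, a nonzero stable part requires that \(v_{1,1}\) return to itself under \(\rF^2\). This means \(\rF(v_{1,1})=v_{2,1}\) (needing \(a\ge 1\)) and then \(v_{2,1}=v_{2,\rg-a}\), i.e.\ \(a=\rg-1=n+1\). So the cycle \(v_{1,1}\mapsto v_{2,1}\mapsto v_{1,1}\) exists exactly when \(a=n+1\), which is the \(\mu\)-ordinary index. This gives \(p\)-rank \(2\) there (the cycle spans a 2-dimensional \(\rF\)-bijective piece) and \(p\)-rank \(0\) for all other \(a\), since then \(\rF\) is nilpotent.

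The next step is to match indexing. The \(\mu\)-ordinary stratum is \(\Sh^{n+1}_{\widetilde{\GU}(n+1,1)}\) of top dimension in the \(W^{(n+1,1)}_{\EO}\) labeling, while the lemma writes it as the \(\rho=2\) stratum of \cite{BW06}. I will reconcile the two via \eqref{Eq:Codim}: \(\mathrm{cod}(2)=0\), so \(\rho=2\) is the open stratum. The stated superscript \(n\) should be read as the open stratum (a notational shift of the signature), and I will phrase the conclusion in terms of ``the \(\mu\)-ordinary stratum''.

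As a cross-check, I will use the BW06 description: \(A[p]\) corresponds to \(\underline{\bar B}(\rho)\oplus\underline{\bar S}^{\,n+1-\rho}\). Each \(\underline{\bar S}\) is supersingular and has \(p\)-rank \(0\), and \(\underline{\bar B}(\rho)\) is local-local unless \(\rho=2\), where it contains the \'etale\(\times\)multiplicative part of height \(2\). The main obstacle is bookkeeping: being sure the standard-basis formulas (with \(\gamma\) swapped) are read correctly so that the only \(\rF\)-cycle occurs at \(a=\rg-1\). I would double check this against the \(\mu\)-ordinary Newton polygon (slopes \(0,1/2,1\) with \(0\)-multiplicity \(2\)) as an independent confirmation.
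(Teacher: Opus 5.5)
Your argument is correct, but it takes a different route from the paper.

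\textbf{The paper's route.} It first discards the odd-$\rho$ strata, using the description (from \cite{BW06}) of the supersingular locus as their union. For $\rho=2m$ even, it then reads off the multiplicity of slope $0$ from the Newton slopes $\bigl((\tfrac12-\tfrac1{2m})^{2m},(\tfrac12)^{\ast},(\tfrac12+\tfrac1{2m})^{2m}\bigr)$ of $\underline{B}(\rho)\oplus\underline{S}^{\,\ast}$ supplied by \cite[Lem.~3.3]{BW06}. Slope $0$ occurs only for $m=1$, and then with multiplicity $2$.

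\textbf{Your route.} You use that the $p$-rank depends only on $A[p]$: it is the dimension of the $\rF$-bijective part of $M$, which is exactly the datum defining the EO stratum. You compute it in the normal form of \S~\ref{S:standardbasis}. Since $\rF$ interchanges $M_1$ and $M_2$ and $\rF(M_2)\subseteq\langle v_{1,1}\rangle$, the bijective part is $2$-dimensional if $\rF^2(v_{1,1})\neq 0$ and zero otherwise. The condition $\rF^2(v_{1,1})\neq 0$ holds iff $a=\rg-1$. Your identification of the normal-form parameter with the EO label is also sound. With $M_{1,j}=\langle v_{1,1},\dots,v_{1,j}\rangle$ one has $\ker\rF\cap M_1=\langle v_{1,a+1}\rangle$, so the jump occurs at $b=a+1$, and $a=\rg-1$ is the open stratum.

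\textbf{Comparison.} Your approach is more self-contained and uniform. It treats supersingular and non-supersingular strata at once. It needs neither the identification of the supersingular locus nor the EO--Newton comparison implicit in attaching isocrystal slopes to an EO stratum; its only input is Moonen's normal form. The paper's approach gives more information, namely the full Newton polygon of each non-supersingular EO stratum, not just its $p$-rank.

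\textbf{Indexing.} Your remark on indexing is well founded. The slope multiplicities in the paper's proof sum to $2n+2$, so that proof is effectively written in the signature-$(n,1)$ normalization, which is where the superscript $n$ comes from. Your computation works for arbitrary $\rg$, so the shift does not affect it.
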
 
      \begin{proof}
          Since the supersingular locus has $p$-rank $0$, it is enough to consider a stratum $\Sh_{\widetilde{\GU}(n+1,1),\rho}$ with $\rho=2m$ even.
Let $(\underline{A}_0, \underline{A},\bar{\eta})\in \Sh_{\widetilde{\GU}(n+1,1),\rho}(\Fpbar)$. By \cite[Lem.~3.3]{BW06}, the isocrystal attached to the Dieudonn\'e space $\underline{B}(\rho)\oplus \underline{S}^{\,n+1-\rho}$ has slopes
\[
\left\{
\left(\frac12-\frac{1}{2m}\right)^{2m},
\left(\frac12\right)^{2n+2-4m},
\left(\frac12+\frac{1}{2m}\right)^{2m}
\right\}.
\]
Therefore the slope $0$ occurs if and only if $m=1$, equivalently $\rho=2$, in which case it occurs with multiplicity $2$. Hence $\Sh_{\widetilde{\GU}(n+1,1),\rho}$ has $p$-rank~$2$ for $\rho=2$, and $p$-rank $0$ for all even $\rho>2$.
      \end{proof} 

\subsection{$a$-numbers of EO strata of $\Sh_{\widetilde{\GU}(n+1,1), \Fpbar}$}\label{S:AnumberUnitShiv}

Note that, if we adopt the convention that for an $\Fpbar$-point $(\underline{A}_0, \underline{A}, \bar{\eta})$ of $\Sh_{\widetilde{\GU}(n+1,1),\Fpbar}$, its $a$-number is defined to be $a(A)$, then the $a$-number of an EO stratum of $\Sh_{\widetilde{\GU}(n+1,1),\Fpbar}$ is well-defined.

     \begin{lemma}\label{Lem:AnumberUnit}
The superspecial EO stratum $\Sh_{\widetilde{\GU}(n+1,1),\Fpbar}^{0}$ has $a$-number $n+1$,
while every other EO stratum has $a$-number $n-1$.
\end{lemma}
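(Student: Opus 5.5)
The plan is to compute the $a$-number directly from the standard basis of the unitary Dieudonné module recalled in \S~\ref{S:standardbasis}. We may work with $A$ alone, since by convention the $a$-number of a point is $a(A)$, and the EO stratum is determined by $(A[p],\iota)$. For a BT-1 over $\Fpbar$ with (contravariant) Dieudonné module $(M,\rF,\rV)$ we use
\[
a(A)=\dim_{\Fpbar}\bigl(\ker \rF\cap \ker \rV\bigr),
\]
which is equivalently $\dim M/(\rF M+\rV M)$ by duality. Fix an EO index $a$ and take the basis $\{v_{i,j}\}$ of \S~\ref{S:standardbasis}, with $p$ inert, so $\gamma$ swaps $1$ and $2$. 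Here $\rg$ denotes the $\tilde\kappa$-rank of each factor $M_i$, and we keep the indexing of $W_{\EO}$ consistent with the normalisation in the statement.

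The first step is to observe that $\rF$ and $\rV$ send each basis vector either to $0$ or to a basis vector. Moreover, the nonzero images under $\rF$ are pairwise distinct, and likewise for $\rV$. Checking this on the displayed formulas: on $M_1$, $\rF$ kills only $v_{1,a+1}$ and is injective on the others; on $M_2$ only one vector survives. The same holds for $\rV$. Consequently $\ker\rF$ and $\ker\rV$ are both spanned by basis vectors, and so is their intersection. Hence $a(A)$ equals the number of basis vectors killed by both $\rF$ and $\rV$.

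The second step is the count, done separately on each factor.
\begin{itemize}
\item On $M_1$, $\rF$ kills only $v_{1,a+1}$ and $\rV$ kills only $v_{1,1}$. So $M_1$ contributes $1$ if $a=0$ and $0$ otherwise.
\item On $M_2$, $\rF$ kills every $v_{2,j}$ with $j\neq \rg-a$, and $\rV$ kills every $v_{2,j}$ with $j\neq \rg$. So $M_2$ contributes $\rg-1$ if $a=0$, since then the two exceptional indices coincide, and $\rg-2$ otherwise.
\end{itemize}
Summing gives $a(A)=\rg$ on the superspecial stratum $a=0$ and $a(A)=\rg-2$ on every other stratum. With the normalisation of $\rg$ underlying the statement, these are the values $n+1$ and $n-1$.

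As a consistency check, the stratum $a=0$ is the minimal one. It lies in the supersingular locus (\S~\ref{S:PrankUnitShv}) and has $a(A)=\dim A$, so it is indeed the superspecial stratum.

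The main obstacle is bookkeeping rather than mathematics. One must make sure the standard-basis normal form of \cite[4.9]{MoonenWeyl} is applied with the correct rank $\rg$ for the relevant signature, and that the index $a$ there matches the EO index $a\in W_{\EO}$ defined via the jump of $\eta_1$. One must also check that the convention $a(A)=\dim(\ker\rF\cap\ker\rV)$ agrees with $\dim\Hom(\alpha_p,A)$ for the contravariant theory; this holds because $\alpha_p$ is self-dual. To confirm the normalisation, we would test the computation against the small cases already worked out in the paper, for instance signature $(1,1)$ compared with the Hilbert/modular curve descriptions in \S~\ref{S:Examples}.
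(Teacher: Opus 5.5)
Your proposal is correct and is essentially the paper's proof. Both arguments read the $a$-number off the standard basis of \S~\ref{S:standardbasis}: the paper computes $\dim_{\Fpbar} M/(\rF M+\rV M)$, and you compute $\dim_{\Fpbar}(\ker\rF\cap\ker\rV)$. These agree because $\ker\rF=\rV M$, $\ker\rV=\rF M$ and $\dim\rF M+\dim\rV M=\dim M$. Your uniform count also covers the superspecial case, which the paper dispatches ``by definition''.

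The one thing to fix is the closing hedge: state outright that $\rg=\dim A$, so your count gives $\rg$ and $\rg-2$. The stated values $n+1$ and $n-1$ correspond to $\rg=n+1$, i.e.\ signature $(n,1)$, which is the normalization the paper tacitly uses here: its superspecial case takes $\dim A=n+1$, and so do the $p$-rank computations just before it. For $\Sh_{\widetilde{\GU}(n+1,1),\Fpbar}$ read literally, $\rg=n+2$ and the same computation gives $n+2$ and $n$.
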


\begin{proof}
The case of the superspecial EO stratum $\Sh_{\widetilde{\GU}(n+1,1),\Fpbar}^{0}$ is clear by
definition: for each $\Fpbar$-point of $\Sh_{\widetilde{\GU}(n+1,1),\Fpbar}^{0}$, the underlying
abelian variety is superspecial, and hence has $a$-number $n+1$. Now let $\Sh_{\widetilde{\GU}(n+1,1),\Fpbar}^{r}$ be an EO stratum with $r\neq 0$, and let
$(M,\rF,\rV)$ be its standard contravariant Dieudonn\'e module, with standard basis
as described in the proof of Lemma~\ref{Lem:OperatorT}. Let
$(M_0,\rF_0,\rV_0)$ be the contravariant Dieudonn\'e module of the finite group
scheme $\alpha_p$ over $\Fpbar$. Then
\(
M_0\cong \Fpbar\) and \(\rF_0=\rV_0=0.
\)
By contravariant Dieudonn\'e theory, we have
\[a(A)
=
\dim_{\Fpbar}\Hom_{\Fpbar}(\alpha_p,A)
=
\dim_{\Fpbar}\Hom_{\mathrm{DM}}(M,M_0)=\dim_{\Fpbar} M/(\rF M+\rV M).
\]
It then follows directly from the explicit description of $\rF$ and $\rV$ on the
standard basis given in the proof of Lemma~\ref{Lem:OperatorT} that
\[
\dim_{\Fpbar} M/(\rF M+\rV M)=n-1.
\]
Therefore every EO stratum $\Sh_{\widetilde{\GU}(n+1,1),\Fpbar}^{r}$ with $r\neq 0$ has
$a$-number $n-1$.
\end{proof}

       \subsection{The Illustrative Diagram (unitary)} \label{S:UnitDiagram}
		The diagram below schematically illustrates the inert case of Theorem~\ref{KeyRelationUnitaryEOInert}, together with the other strata. It is read in the same way as the diagram for the orthogonal case in \S~\ref{S:DiagOrthog}. The middle vertical line is the \(p\)-rank-zero line: strata on or to its right are supersingular, while those to its left have \(p\)-rank \(>0\).
		\begin{align*}
			\scalebox{0.8}{ % Adjust the scale factor to fit the diagram on the page
				\xymatrix@R=1.1cm@C=1.1cm{ 
					&&&&\boxed{0}\ar@{--}[dd]\ar[dl] &&&&& \mathrm{U}(0,1) \\
					&&& \boxed{1} \ar[dl] \ar@{-->}[rr]& & \boxed{0} \ar[dr]& & && \mathrm{U}(1,1) \\
					&&\boxed{2} \ar[dl] \ar@{-->}[rr]& & \boxed{1} \ar@{-->}[rr]\ar@{-->}[dd]\ar[dl] & & \boxed{0}\ar[dr] & & &\mathrm{U}(2,1) \\
					&\boxed{3} \ar[dl] \ar@{-->}[rr]& & \boxed{2}\ar[rr] \ar[dl] & & \boxed{1}\ar@{-->}[rr] \ar[dr] & & \boxed{0} \ar[dr] &&	\mathrm{U}(3,1) \\
					\boxed{4}\ar@{-->}[rr]  & & \boxed{3}\ar@{-->}[rr]  & & \boxed{2}\ar@{-->}[rr] & & \boxed{1}\ar@{-->}[rr]  & & \boxed{0} &\mathrm{U}(4,1) }}
		\end{align*}
		%\begin{lemma}
		%	The map $w \mapsto  w_{\mu}w w_0$ defines an order-reversing involution of ${}^\mu W \to {}^\mu W$. Consequently we have an identification of the set of $E_{\mu}$-orbits $\{ E_{\mu} \cdot w\mid w\in {}^JW\}$ and $\{E_{\mu}\cdot (w_{0, \mu}ww_0)\mid w\in {}^JW\}$. 
		%\end{lemma}

	\end{document}